\newtheorem{thm}[equation]{Theorem}
\newtheorem{lem}[equation]{Lemma}
\newtheorem{prop}[equation]{Proposition}
\newtheorem{cor}[equation]{Corollary}
\theoremstyle{remark}
\newtheorem{rmk}[equation]{Remark}
\theoremstyle{definition}
\newtheorem{defi}[equation]{Definition}
\newtheorem{exam}[equation]{Example}
\newcommand{\Hom}{\mathrm{Hom}}
\newcommand{\Gm}{\mathbb G _{m}}
\newcommand{\kdim}{\mathrm{dim}}
\newcommand{\blowup}{\mathcal B \ell}
\newcommand{\unsmot}{\mathcal M}
\newcommand{\unsmottran}{\mathcal M^{tr}}
\newcommand{\TspectraX}{Spt(\mathcal M)}
\newcommand{\unstablehomotopyX}{\mathcal{H}}
\newcommand{\stablehomotopyX}{\mathcal{SH}}
\newcommand{\qeffstablehomotopyX}{\Sigma _{T}^{q}\stablehomotopyX^{\mathit{eff}}}
\newcommand{\qorthogonalTspectraX}{L_{<q}\TspectraX}
\newcommand{\northogonalTspectraX}{L_{<n}\TspectraX}
\newcommand{\nplusoneorthogonalTspectraX}{L_{<n+1}\TspectraX}
\newcommand{\qplusoneorthogonalTspectraX}{L_{<q+1}\TspectraX}
\newcommand{\qorthogonalstablehomotopyX}{L_{<q}\stablehomotopyX}
\newcommand{\qorthogonalX}{\stablehomotopyX ^{\perp}(q)}
\newcommand{\nbiratunsmotX}{B_{n} \mathcal M}
\newcommand{\nplusonebiratunsmotX}{B_{n+1} \mathcal M}
\newcommand{\zerobiratunsmotX}{B_{0} \mathcal M}
\newcommand{\onebiratunsmotX}{B_{1} \mathcal M}
\newcommand{\homotnbiratunsmotX}{\unstablehomotopyX (B_{n})}
\newcommand{\homotnplusonebiratunsmotX}{\unstablehomotopyX (B_{n+1})}
\newcommand{\homotonebiratunsmotX}{\unstablehomotopyX (B_{1})}
\newcommand{\homotzerobiratunsmotX}{\unstablehomotopyX (B_{0})}
\newcommand{\nwbiratunsmotX}{WB_{n} \mathcal M}
\newcommand{\jwbiratunsmotX}{WB_{j} \mathcal M}
\newcommand{\rminonewbiratunsmotX}{WB_{r-1} \mathcal M}
\newcommand{\jminonewbiratunsmotX}{WB_{j-1} \mathcal M}
\newcommand{\nplusonewbiratunsmotX}{WB_{n+1} \mathcal M}
\newcommand{\nminonewbiratunsmotX}{WB_{n-1} \mathcal M}
\newcommand{\zerowbiratunsmotX}{WB_{0} \mathcal M}
\newcommand{\onewbiratunsmotX}{WB_{1} \mathcal M}
\newcommand{\homotnwbiratunsmotX}{\unstablehomotopyX (WB_{n})}
\newcommand{\homotnplusonewbiratunsmotX}{\unstablehomotopyX (WB_{n+1})}
\newcommand{\homotonewbiratunsmotX}{\unstablehomotopyX (WB_{1})}
\newcommand{\homotzerowbiratunsmotX}{\unstablehomotopyX (WB_{0})}
\newcommand{\nwbiratTspectraX}{WB_{n}Spt (\mathcal M)}
\newcommand{\nwbiratstablehomotopy}{\stablehomotopyX(WB_{n})}
\newcommand{\stepone}{\mathbf{C1}}
\newcommand{\steptwo}{\mathbf{C2}}
\newcommand{\topspc}{\mathbf{Top}}
\newcommand{\htop}{\mathbf{H}_{\mathrm{Top}}}
\newcommand{\infsusp}{\Sigma _{T}^{\infty}}
\newcommand{\infdel}{\Omega _{T}^{\infty}}
\numberwithin{equation}{section}
\begin{document}


\title{The Unstable Slice Filtration}


\author{Pablo Pelaez}
\address{Department of Mathematics, Rutgers University, U.S.A.}
\email{pablo.pelaez@rutgers.edu}


\subjclass[2010]{Primary 14F42}

\keywords{Birational Invariants, Motivic Homotopy Theory, Postnikov Tower,
	Slice Filtration, Unstable Slice Filtration}


\begin{abstract}
	The main goal of this paper is to construct an analogue of Voevodsky's slice
	filtration in the motivic unstable homotopy category.  The construction is
	done via birational invariants, this is motivated by
	the existence of an equivalence of categories between the orthogonal components
	for Voevodsky's slice filtration and the birational motivic stable homotopy categories
	constructed in \cite{Pelaez:2011fk}.  Another advantage of this approach is that
	the slices appear naturally as homotopy fibres (and not as in the stable setting,
	where they are defined as homotopy cofibres) which behave much
	better in the unstable setting.
\end{abstract}


\maketitle


\begin{section}{Introduction}
		\label{Introd}
		
	Our main goal is to construct a tower in the motivic unstable homotopy category with similar properties 
	to Voevodsky's slice tower \cite{MR1977582}
	in the motivic stable homotopy category.  In this section we recall Voevodsky's definition for
	the slice filtration and explain why our construction in the unstable setting is a natural analogue.
		
\subsection*{Definitions and Notation}			
	In this paper $X$ will denote a Noetherian separated base scheme of finite Krull dimension,
	$Sch_{X}$ the category of schemes of finite type over $X$ and $Sm_{X}$ the full
	subcategory of $Sch_{X}$ consisting of smooth schemes over $X$ regarded
	as a site with the Nisnevich topology.	  All the maps between schemes will be considered over
	the base $X$.  Given $Y\in Sch_{X}$, all the closed subsets $Z$ of $Y$ will be considered
	as closed subschemes with the reduced structure.
	
	Let  $\unsmot$ be the category of pointed simplicial presheaves on $Sm_{X}$
	equipped with the motivic Quillen model structure \cite{MR0223432} constructed by Jardine 
	\cite[Thm.\,1.1]{MR1787949} which is Quillen equivalent to the one defined originally by
	Morel-Voevodsky \cite[Thm.\,1.2]{MR1787949}, taking the affine line $\mathbb A _{X}^{1}$ as interval 
	\cite[p.\,86 Thm.\,3.2]{MR1813224}.
	We will write $\unstablehomotopyX$ for its associated homotopy category.
	Given a map $f:Y\rightarrow W$ in $Sm_{X}$, we will abuse notation and denote
	by $f$ the induced map $f:Y_{+}\rightarrow W_{+}$ in $\mathcal M$ between the corresponding pointed
	simplicial presheaves represented by $Y$ and $W,$ respectively.
	
	We define $T$ in 
	$\unsmot$ as the pointed simplicial presheaf represented by 
	$S^{1}\wedge \mathbb G_{m}$, where $\mathbb G_{m}$ is the multiplicative group 
	$\mathbb A^{1}_{X}-\{ 0 \}$ pointed by $1$, and $S^{1}$ denotes the simplicial circle.
	Given an arbitrary integer $r\geq 1$, let  $S^{r}$ (resp. $\Gm ^{r}$) denote the
	iterated smash product of $S^{1}$ (resp. $\Gm$) with $r$-factors: $S^{1}\wedge \cdots \wedge S^{1}$
	(resp. $\Gm \wedge \cdots \wedge \Gm$);
	$S^{0}= \Gm ^{0}$ will be by definition equal to the pointed simplicial presheaf represented by the base scheme $X$.
	
	We will use the following notation in all the categories under consideration: $\ast$ will
	denote the terminal object, and $\cong$ will denote that a map is an isomorphism.
	
	Let $\TspectraX$ denote Jardine's category of symmetric $T$-spectra on 
	$\unsmot$ equipped with the motivic model structure defined in 
	\cite[theorem 4.15]{MR1787949} and $\stablehomotopyX$ denote its homotopy category, 
	which is triangulated.  We will follow Jardine's notation \cite[p. 506-507]{MR1787949}
	where $F_{n}$ denotes the left adjoint to the $n$-evaluation functor
		\[ \xymatrix@R=0.5pt{\TspectraX \ar[r]^-{ev_{n}}& \mathcal M \\
					(E^{m})_{m\geq 0} \ar@{|->}[r]& E^{n}.}
		\]
	Notice that $F_{0}(A)$ is just the usual infinite suspension spectrum $\Sigma _{T}^{\infty}A$.

	For every integer $q\in \mathbb Z$, we consider the following family of symmetric $T$-spectra
		\begin{align}
				\label{def.Cqeff}
			C^{q}_{\mathit{eff}}=\{ F_{n}(S^{r}\wedge \mathbb G _{m}^{s}\wedge U_{+}) 
				\mid n,r,s \geq 0; s-n\geq q; U\in Sm_{X}\}
		\end{align}
	where $U_{+}$ denotes the simplicial presheaf represented by $U$ with a disjoint base point.
	Let $\Sigma _{T}^{q}\stablehomotopyX^{\mathit{eff}}$ denote the smallest full triangulated subcategory of 
	$\stablehomotopyX$ which contains
	$C^{q}_{\mathit{eff}}$ and is closed under arbitrary coproducts.
	Voevodsky \cite{MR1977582} defines the slice filtration in $\stablehomotopyX$
	to be the following family of triangulated subcategories
		\[ \cdots \subseteq \Sigma _{T}^{q+1}\stablehomotopyX^{\mathit{eff}} \subseteq \Sigma _{T}^{q}\stablehomotopyX^{\mathit{eff}}
			\subseteq \Sigma _{T}^{q-1}\stablehomotopyX^{\mathit{eff}} \subseteq \cdots
		\]
	It follows from the work of Neeman \cite{MR1308405}, \cite{MR1812507} that the inclusion
		\[ i_{q}:\Sigma _{T}^{q}\stablehomotopyX^{\mathit{eff}}\rightarrow \stablehomotopyX
		\]
	has a right adjoint $r_{q}:\stablehomotopyX \rightarrow \Sigma _{T}^{q}
	\stablehomotopyX^{\mathit{eff}}$
	\cite[Prop. 3.1.12]{MR2807904}, and that the following functors
		\begin{align*}
				f_{q}: & \stablehomotopyX \rightarrow \stablehomotopyX \\
				s_{<q}: & \stablehomotopyX \rightarrow \stablehomotopyX \\
				s_{q}: & \stablehomotopyX \rightarrow \stablehomotopyX
		\end{align*}
	are triangulated, where $f_{q}$ is defined as the composition $i_{q}\circ r_{q}$; and $s_{<q}, s_{q}$ 
	are characterized by the fact that for every $E\in \stablehomotopyX$, there exist
	distinguished triangles in $\stablehomotopyX$ \cite[Thms. 3.1.16, 3.1.18]{MR2807904}:
		\begin{align}
				\label{orth.dist.triang}	
			\xymatrix{f_{q}E \ar[r]^-{\theta _{q}^{E}}& E \ar[r]^-{\pi _{<q}^{E}}& s_{<q}E \ar[r]& 
			S^{1}\wedge f_{q}E}\\
			\xymatrix{f_{q+1}E \ar[r]^-{\rho _{q}^{E}}& f_{q}E \ar[r]^-{\pi _{q}^{E}}& s_{q}E \ar[r]& 
			S^{1}\wedge f_{q+1}E .}
		\end{align}
	We will refer to $f_{q}E$ as the \emph{$(q-1)$-connective cover} of $E$, to $s_{<q}E$ as the 
	\emph{$q$-orthogonal component} of $E$, and to $s_{q}E$ as the \emph{$q$-slice} of $E$.
	It follows directly from the definition that $s_{<q+1}E, s_{q}E$ satisfy that
	for every symmetric $T$-spectrum $K$ in $\Sigma _{T}^{q+1}\stablehomotopyX^{\mathit{eff}}$:
		\[ \Hom _{\stablehomotopyX}(K,s_{<q+1}E)=\Hom _{\stablehomotopyX}(K,s_{q}E)=0.
		\]  
	Given a symmetric $T$-spectrum $E$, we will say that it is \emph{$q$-orthogonal}
	if for all $K\in \qeffstablehomotopyX$:
		\[	\Hom _{\stablehomotopyX}(K,E)=0.
		\]
	Let $\qorthogonalX$ denote the full subcategory of $\stablehomotopyX$ consisting
	of the $q$-orthogonal objects.

	Furthermore,
	the octahedral axiom for triangulated categories implies that the slices and the orthogonal
	components also fit in a natural distinguished triangle in $\stablehomotopyX$
	\cite[Prop. 3.1.19]{MR2807904}:
		\begin{equation}
				\label{eq.def.slices}
			\xymatrix{s_{q}E \ar[r] & s_{<q+1}E 
					\ar[r]& s_{<q}E \ar[r] & S^{1}\wedge s_{q}E .
							}
		\end{equation}
	
	The slice filtration admits an alternative definition in terms of Quillen model categories
	\cite{MR2576905, MR2807904},  namely:
	
\begin{thm}	
		\label{thm.modelstructures-slicefiltration}
	\begin{enumerate}
		\item	\label{thm.modelstructures-slicefiltration.a}
				There exists a Quillen model category $R_{C^{q}_{\mathit{eff}}}\TspectraX$ 
				which is defined as a right Bousfield localization of $\TspectraX$ with respect to the set of 
				objects $C^{q}_{\mathit{eff}}$ described in \eqref{def.Cqeff}, 
				such that its homotopy category $R_{C^{q}_{\mathit{eff}}}\stablehomotopyX$
				is triangulated and naturally equivalent to
				$\Sigma _{T}^{q}\stablehomotopyX^{\mathit{eff}}$.
				Moreover, the functor $f_{q}$ is canonically isomorphic to the following
				composition of triangulated functors:
					$$\xymatrix{\stablehomotopyX \ar[r]^-{R}& R_{C^{q}_{\mathit{eff}}}\stablehomotopyX \ar[r]^-{C_{q}}&
					 \stablehomotopyX}
					$$
				where $R$ is a fibrant replacement functor in $\TspectraX$, and $C_{q}$ a cofibrant replacement
				functor in $R_{C^{q}_{\mathit{eff}}}\TspectraX$.
		\item	\label{thm.modelstructures-slicefiltration.b}
				There exists a Quillen model category $\qorthogonalTspectraX$ 
				which is defined as a left Bousfield localization of $\TspectraX$ with respect to the set of 
				maps 
					\[ \{ F_{p}(S^{r}\wedge \Gm ^{s}\wedge U_{+})\rightarrow \ast \; | \;
							F_{p}(S^{r}\wedge \Gm ^{s}\wedge U_{+})\in C^{q}_{\mathit{eff}} \}			
					\] 
				such that its homotopy category $\qorthogonalstablehomotopyX$
				is triangulated and naturally equivalent to
				$\qorthogonalX$.
				Moreover, the functor $s_{<q}$ is canonically isomorphic to the following
				composition of triangulated functors:
					$$\xymatrix{\stablehomotopyX \ar[r]^-{Q}& L_{<q}\stablehomotopyX \ar[r]^-{W_{q}}&
					 \stablehomotopyX}
					$$
				where $Q$ is a cofibrant replacement functor in $\TspectraX$, and $W_{q}$ a fibrant replacement
				functor in $\qorthogonalTspectraX$.
		\item \label{thm.modelstructures-slicefiltration.c}
				There exists a Quillen model category $S^{q}\TspectraX$ which is defined as a right
				Bousfield localization of $\qplusoneorthogonalTspectraX$
				with respect to the set of objects
					\[	\{ F_{n}(S^{r}\wedge \mathbb G _{m}^{s}\wedge U_{+}) 
									\mid n,r,s \geq 0; s-n= q; U\in Sm_{X}\}
					\] 
				such that its homotopy category 
				$S^{q}\stablehomotopyX$ is triangulated
				and the identity functor 
					\[	id:R_{C^{q}_{\mathit{eff}}}\TspectraX \rightarrow S^{q}\TspectraX
					\]
				is a left Quillen functor.  Moreover, the functor $s_{q}$ is canonically isomorphic
				to the following composition of triangulated functors:
					$$\xymatrix{\stablehomotopyX \ar[r]^-{R} & R_{C^{q}_{\mathit{eff}}}\stablehomotopyX \ar[r]^-{C_{q}}& 
									S^{q}\stablehomotopyX \ar[r]^-{W_{q+1}}&
									R_{C^{q}_{\mathit{eff}}}\stablehomotopyX \ar[r]^-{C_{q}}& \stablehomotopyX}
					$$
	\end{enumerate}
\end{thm}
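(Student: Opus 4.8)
\emph{Proof strategy.}
All three statements are proved in \cite{MR2576905,MR2807904}; here is the shape of the argument. The underlying tool is the existence theory for cellular Bousfield localizations: a left proper cellular model category admits a left Bousfield localization with respect to any set of maps, and a right proper cellular model category admits a right Bousfield (co)localization with respect to any set of objects, in both cases with explicit descriptions of the (co)fibrant objects, the weak equivalences and the homotopy category. So the first step is to record that Jardine's motivic model structure on $\TspectraX$ is simplicial, cellular, left proper, and --- the one point that is not purely formal --- right proper; cellularity and right properness are established as in \cite{MR2807904}, building on the corresponding properties of the $\mathbb{A}^{1}$-local model structure on $\unsmot$ and Hovey's treatment of symmetric spectra, using that $T=S^{1}\wedge\Gm$ is a suspension of a cofibrant object. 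Two preliminary remarks will be used throughout: every object $F_{n}(S^{r}\wedge\Gm^{s}\wedge U_{+})$ of $C^{q}_{\mathit{eff}}$ in \eqref{def.Cqeff} is cofibrant; and, using the adjunction isomorphisms $F_{n}(T\wedge A)\cong F_{n+1}(A)$, the set $C^{q}_{\mathit{eff}}$ generates, under arbitrary coproducts and triangles, precisely the triangulated subcategory $\qeffstablehomotopyX$. The latter makes each localization below stable, so that its homotopy category is triangulated.

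Statements \eqref{thm.modelstructures-slicefiltration.a} and \eqref{thm.modelstructures-slicefiltration.b} are then essentially bookkeeping with universal properties. In \eqref{thm.modelstructures-slicefiltration.a} one forms the right Bousfield localization $R_{C^{q}_{\mathit{eff}}}\TspectraX$: its weak equivalences are the $C^{q}_{\mathit{eff}}$-colocal equivalences, and its homotopy category is the full subcategory of $\stablehomotopyX$ on the $C^{q}_{\mathit{eff}}$-colocal objects, which --- since the $F_{n}(\cdots)$ are compact and generate $\qeffstablehomotopyX$ --- is exactly $\qeffstablehomotopyX$. For a $T$-spectrum $E$ the map $C_{q}(R(E))\to R(E)\cong E$ is a $C^{q}_{\mathit{eff}}$-colocal equivalence out of a $C^{q}_{\mathit{eff}}$-colocal object, hence by the universal property of the colocalization it is canonically the counit $\theta^{E}_{q}\colon f_{q}E\to E$ of $(i_{q},r_{q})$; thus $f_{q}\cong C_{q}\circ R$, naturally and compatibly with triangles. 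In \eqref{thm.modelstructures-slicefiltration.b} one forms the left Bousfield localization $\qorthogonalTspectraX$ with respect to the maps $F_{p}(S^{r}\wedge\Gm^{s}\wedge U_{+})\to\ast$ with $F_{p}(\cdots)\in C^{q}_{\mathit{eff}}$; a fibrant object $Z$ is local iff $[K,Z]=0$ for each such domain, equivalently (again by the generation remark) for each $K\in\qeffstablehomotopyX$, so the local objects are precisely the $q$-orthogonal ones and $\qorthogonalstablehomotopyX\simeq\qorthogonalX$. The localization map $Q(E)\to W_{q}(Q(E))$ is a local equivalence with $q$-orthogonal target whose cofibre lies in the localizing subcategory generated by the $F_{p}(\cdots)$, i.e.\ in $\qeffstablehomotopyX$; these two properties characterize the map $\pi^{E}_{<q}\colon E\to s_{<q}E$ in \eqref{orth.dist.triang}, so $s_{<q}\cong W_{q}\circ Q$.

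Statement \eqref{thm.modelstructures-slicefiltration.c} is the delicate one and is where the genuine obstacle sits. We want $S^{q}\TspectraX$ to be a right Bousfield localization of $\qplusoneorthogonalTspectraX$; but right Bousfield localization requires a right proper ambient model category, and a left Bousfield localization such as $\qplusoneorthogonalTspectraX=L_{<q+1}\TspectraX$ need not be right proper. Thus the crux of the proof is to show that $\qplusoneorthogonalTspectraX$ is right proper (cellularity being preserved automatically by left Bousfield localization of a cellular model category). Since this localization is a nullification --- it kills the set $C^{q+1}_{\mathit{eff}}$, which generates a localizing subcategory --- one proves right properness by verifying that the base change of an $L_{<q+1}$-local equivalence along a stable fibration is again an $L_{<q+1}$-local equivalence, via a Mayer--Vietoris comparison of the long exact sequences of homotopy sheaves attached to the relevant distinguished triangles. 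Granting this, Hirschhorn's theorem yields $S^{q}\TspectraX$; its homotopy category is the colocalizing subcategory of $\stablehomotopyX ^{\perp}(q+1)$ generated by the images of the $F_{n}(S^{r}\wedge\Gm^{s}\wedge U_{+})$ with $s-n=q$, and cofibrant replacement there computes, on homotopy categories, the functor $f_{q}\circ s_{<q+1}$, whose essential image is the class of objects that are both $q$-effective and $(q+1)$-orthogonal. Applying the octahedral axiom to the triangles \eqref{orth.dist.triang} --- which is exactly the derivation of \eqref{eq.def.slices} --- gives natural isomorphisms $s_{q}\cong s_{<q+1}\circ f_{q}\cong f_{q}\circ s_{<q+1}$; from the first of these, $f_{q}(\varphi)$ invertible implies $s_{q}(\varphi)$ invertible, so every $C^{q}_{\mathit{eff}}$-colocal equivalence is an $S^{q}$-local equivalence, and with a compatible comparison of cofibrant objects this gives that $id\colon R_{C^{q}_{\mathit{eff}}}\TspectraX\to S^{q}\TspectraX$ is left Quillen (this verification is carried out in \cite{MR2807904}). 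Tracing these identifications through the four-fold composite $C_{q}\circ W_{q+1}\circ C_{q}\circ R$ produces the stated formula for $s_{q}$. In short, the only truly hard step is the right properness of $\qplusoneorthogonalTspectraX$; everything else is a careful but mechanical assembly of universal properties.
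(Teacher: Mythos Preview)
Your proposal is correct and takes the same approach as the paper: both simply defer to \cite{MR2576905,MR2807904}, with the paper citing specific theorem numbers and you giving an expanded outline of what those proofs contain (existence of cellular Bousfield localizations, identification of (co)local objects with $\qeffstablehomotopyX$ and $\qorthogonalX$, and the right properness of $\qplusoneorthogonalTspectraX$ as the one genuinely nontrivial input for part~(\ref{thm.modelstructures-slicefiltration.c})). One small slip: the adjunction-style identification should read $F_{n}(T\wedge A)\simeq F_{n-1}(A)$ in $\stablehomotopyX$, not $F_{n+1}(A)$, since $F_{n}(-)\simeq \Sigma_{T}^{-n}\infsusp(-)$; this does not affect your argument, as the only use you make of it is that $C^{q}_{\mathit{eff}}$ generates $\qeffstablehomotopyX$.
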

\begin{proof}
	(\ref{thm.modelstructures-slicefiltration.a}) and (\ref{thm.modelstructures-slicefiltration.c})
	follow directly from \cite[Thms. 3.3.9, 3.3.25]{MR2807904} and
	\cite[Thms. 3.3.50, 3.3.68]{MR2807904}, respectively.  Finally,
	(\ref{thm.modelstructures-slicefiltration.b}) follows from
	Proposition 3.2.27(3) together with Theorem 3.3.26, Proposition 3.3.30 and
	Theorem 3.3.45 in \cite{MR2807904}.
\end{proof}

	On the other hand, the model categories $\qorthogonalTspectraX$ admit an alternative description which is more
	suitable for an unstable interpretation.  For this, we
	fix an arbitrary integer  $n\geq 0$, and consider the following set of 
	open immersions with smooth closed complement of codimension at least $n+1$
		\begin{align}
				\label{nwbirat.locmaps}
			WB_{n}=\{ \iota _{U,Y}: &U_{+}\rightarrow Y_{+} \text{ open immersion } |\\
									& Y, Z=Y\backslash U \in Sm_{X}; Y \text{ irreducible};
									      (codim_{Y}Z)\geq n+1 \nonumber
					\} .
		\end{align}	
	Then we take the left Bousfield localization of the model category $\TspectraX$ with respect to the set of maps 
	\begin{align}	
			\label{infsusp.nwbirat.locmaps}
		sWB_{n}=\{ F_{p}(\Gm ^{b}\wedge \iota _{U,Y}): \iota _{U,Y}\in WB_{r};
						b, p, r\geq 0, r-p+b\geq n \}
	\end{align}
	which will be denoted by $\nwbiratTspectraX$.  The main result of \cite{Pelaez:2011fk} is the following
	comparison theorem:

\begin{thm}[see Theorem 3.6 in \cite{Pelaez:2011fk}]
		\label{thm.orthogonal=wbirationalTspectra}
	Let $n\in \mathbb Z$ be an arbitrary integer.  Then the identity functor
		\[ id: \nwbiratTspectraX \rightarrow \nplusoneorthogonalTspectraX
		\]
	is a Quillen equivalence.
\end{thm}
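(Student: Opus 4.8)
The plan is to use that both $\nwbiratTspectraX$ and $\nplusoneorthogonalTspectraX$ are left Bousfield localizations of the single model category $\TspectraX$ sharing its cofibrations; by general properties of left Bousfield localizations, the identity functor between two such localizations is a Quillen equivalence exactly when they have the same class of local objects (equivalently, the same fibrant objects, equivalently the same weak equivalences). By Theorem~\ref{thm.modelstructures-slicefiltration}\,(\ref{thm.modelstructures-slicefiltration.b}) the local objects of $\nplusoneorthogonalTspectraX$ are precisely the fibrant $(n+1)$-orthogonal symmetric $T$-spectra, so the statement reduces to: a fibrant $E\in\TspectraX$ is local with respect to the set $sWB_{n}$ of \eqref{infsusp.nwbirat.locmaps} if and only if $E$ is $(n+1)$-orthogonal. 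Since $\TspectraX$ is a stable model category, for a map $f$ between cofibrant objects and $E$ fibrant the complex $\mathrm{map}(f,E)$ is a weak equivalence if and only if $E$ is right orthogonal in all degrees to $\mathrm{cone}(f)$, and the class of objects right orthogonal to a fixed $E$ is localizing; thus $E$ is $sWB_{n}$-local iff $E$ is right orthogonal to the smallest localizing subcategory $\mathcal{A}$ of $\stablehomotopyX$ containing the cones of the maps of $sWB_{n}$, and $E$ is $(n+1)$-orthogonal iff $E$ is right orthogonal to $\mathcal{B}:=\Sigma_{T}^{n+1}\stablehomotopyX^{\mathit{eff}}$ (which by definition is the smallest localizing subcategory containing $C^{n+1}_{\mathit{eff}}$). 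It therefore suffices to prove the equality of localizing subcategories $\mathcal{A}=\mathcal{B}$.

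For $\mathcal{A}\subseteq\mathcal{B}$: a generating map of $sWB_{n}$ is $F_{p}(\Gm^{b}\wedge\iota_{U,Y})$ with $\iota_{U,Y}\in WB_{r}$ and $r-p+b\ge n$, where $Z=Y\setminus U$ is smooth over $X$ of codimension $\ge r+1$ in the irreducible $Y$; applying the left Quillen functors $F_{p}$ and $\Gm^{b}\wedge(-)$ to the cofibre sequence $U_{+}\to Y_{+}\to Y/U$, its cone is $F_{p}(\Gm^{b}\wedge(Y/U))$, and by the Morel--Voevodsky homotopy purity theorem $Y/U\simeq\mathrm{Th}(N_{Z/Y})$, a finite wedge of Thom spaces of vector bundles of ranks $\ge r+1$ (one summand per connected component of $Z$). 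The geometric core of the argument is the claim that $\infsusp\mathrm{Th}(N)$ lies in $\Sigma_{T}^{c}\stablehomotopyX^{\mathit{eff}}$ for $N$ of rank $c$: one chooses a finite Zariski cover of the base trivializing $N$ and inducts on its cardinality, using at each stage the Mayer--Vietoris cofibre sequence for Thom spaces over an open cover, with base case $\mathrm{Th}(\mathcal{O}^{\oplus c})\simeq(\mathbb{P}^{1})^{\wedge c}\wedge(-)_{+}$, whose infinite suspension is $F_{0}(S^{c}\wedge\Gm^{c}\wedge(-)_{+})\in C^{c}_{\mathit{eff}}$. A short computation with effective weights (smashing with $\Gm$ raises the weight by one, and applying $F_{p}$ lowers it by $p$, via the stable equivalences $F_{j+1}(T\wedge A)\to F_{j}(A)$) then shows $\mathrm{cone}(F_{p}(\Gm^{b}\wedge\iota_{U,Y}))\in\Sigma_{T}^{r+1+b-p}\stablehomotopyX^{\mathit{eff}}\subseteq\Sigma_{T}^{n+1}\stablehomotopyX^{\mathit{eff}}=\mathcal{B}$. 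Since $\mathcal{B}$ is localizing, $\mathcal{A}\subseteq\mathcal{B}$.

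For $\mathcal{B}\subseteq\mathcal{A}$: using the stable equivalences $F_{j+1}(T\wedge A)\to F_{j}(A)$ together with the invertibility of $S^{1}$ and $\Gm$ in $\stablehomotopyX$, one checks that each generator $F_{p}(S^{r}\wedge\Gm^{s}\wedge V_{+})$ of $C^{n+1}_{\mathit{eff}}$ is, up to an $S^{1}$-suspension, isomorphic in $\stablehomotopyX$ to $\Gm^{s-p}\wedge\infsusp V_{+}$ with $s-p\ge n+1$; hence $\mathcal{B}$ is the smallest localizing subcategory containing the objects $\Gm^{m}\wedge\infsusp V_{+}$ for $m\ge n+1$ and $V\in Sm_{X}$. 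Each such object is, up to an $S^{1}$-suspension, the cone of a map of $sWB_{n}$: one takes the open immersion $\iota\colon(\mathbb{A}^{1}_{V}\setminus V)_{+}\to(\mathbb{A}^{1}_{V})_{+}$ complementary to the zero section, which lies in $WB_{0}$ (for $V$ irreducible; the general case follows by a routine reduction), and the map $F_{p}(\Gm^{b}\wedge\iota)$ with $b,p\ge 0$ chosen so that $b+1-p=m$ and $b-p\ge n$ — possible precisely because $m\ge n+1$ — whose cone is $F_{p}(\Gm^{b}\wedge\mathrm{Th}(\mathcal{O}_{V}))\simeq F_{p}(\Gm^{b}\wedge T\wedge V_{+})$. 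Thus $\mathcal{B}\subseteq\mathcal{A}$, whence $\mathcal{A}=\mathcal{B}$ and the theorem follows.

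The step I expect to be the genuine obstacle is the geometric claim of the second paragraph, that the infinite suspension spectrum of the Thom space of a rank-$c$ vector bundle over a smooth $X$-scheme lies in $\Sigma_{T}^{c}\stablehomotopyX^{\mathit{eff}}$; it requires combining homotopy purity with the Zariski-local triviality of vector bundles, Nisnevich (equivalently Zariski) descent in $\TspectraX$ to obtain the Mayer--Vietoris cofibre sequence for Thom spaces, and some care both with possibly disconnected $Z$ (whose components may have distinct codimensions $\ge r+1$) and with the weight bookkeeping under $\Gm$- and $T$-(de)suspension. Everything else — the model-categorical reduction in the first paragraph and the explicit cones produced in the third — is formal manipulation in the triangulated category $\stablehomotopyX$ together with standard facts about free symmetric $T$-spectra.
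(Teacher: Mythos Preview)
The paper does not give a proof of this theorem: it is simply quoted from \cite[Theorem~3.6]{Pelaez:2011fk} and used as a black box, so your proposal is not competing with any argument in the present paper but rather supplying one.

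Your sketch is correct and follows the expected route. The reduction via local objects to an equality $\mathcal{A}=\mathcal{B}$ of localizing subcategories is the standard manoeuvre for two left Bousfield localizations of the same stable model category, and both inclusions you outline are accurate. The direction you flag as the genuine obstacle, $\mathcal{A}\subseteq\mathcal{B}$, is exactly where the geometric input lies: Morel--Voevodsky homotopy purity identifies the cofibre of $\iota_{U,Y}$ with a Thom space, and Zariski-local triviality plus Mayer--Vietoris places $\infsusp\mathrm{Th}(N)$ in $\Sigma_{T}^{c}\stablehomotopyX^{\mathit{eff}}$ for $N$ of rank $c$. These are precisely the tools the author deploys elsewhere in this paper (cf.\ Theorems~\ref{thm.vanishing.normal.bundle} and~\ref{thm.slices-thom}) and in \cite{Pelaez:2011fk}. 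Your weight bookkeeping under $F_{p}(-)$ and $\Gm^{b}\wedge(-)$ is correct, as is the explicit production of the generators of $C^{n+1}_{\mathit{eff}}$ as shifts of cones of maps in $sWB_{n}$ for the reverse inclusion.
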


	Now, we proceed to describe how the unstable construction is carried out.
	In order to get an analogue for the slice filtration in the motivic unstable homotopy category it is necessary:
		\begin{description}
			\item[C1]	
					To remove from the picture as much as possible the role played by the suspensions 
					(with respect to $S^{1}$ and $\Gm$).
			\item[C2]	
					That the slices 
					appear naturally as homotopy fibres in the tower and not as homotopy cofibres, since the former behave much better in the
					unstable setting.
		\end{description}
		
	If we take ($\steptwo$) as a guiding principle, then the distinguished triangle (\ref{eq.def.slices}) and Theorem
	\ref{thm.modelstructures-slicefiltration}(\ref{thm.modelstructures-slicefiltration.b})-(\ref{thm.modelstructures-slicefiltration.c})
	lead us to consider analogues
	for the model structures $\northogonalTspectraX$ in the motivic unstable homotopy category
	$\unsmot$.  Once these model structures $\nwbiratunsmotX$ have been defined, we obtain a tower of left Quillen functors
		\[	\xymatrix@C=1.2pc{\unsmot \ar[r]^-{id} & \cdots \ar[r]^-{id} & \nplusonewbiratunsmotX \ar[r]^-{id}&
						\nwbiratunsmotX \ar[r]^-{id} &\cdots \ar[r]^-{id}& 
						\onewbiratunsmotX \ar[r]^-{id} & \zerowbiratunsmotX .}
		\]
	Thus, the derived adjunctions induce for every simplicial presheaf $A \in \unsmot$ a natural tower in $\unstablehomotopyX$
		\[	\xymatrix{A \ar[r] & \cdots \ar[r]^-{\nu _{n+1}^{A}} & t _{n+1}(A) \ar[r]^-{\nu _{n}^{A}}&
						t _{n}(A) \ar[r]^-{\nu _{n-1}^{A}} &\cdots \ar[r]^-{\nu _{2}^{A}}& 
						t _{2}(A) \ar[r]^-{\nu _{1}^{A}} & t _{1}(A) \ar[r]^-{\nu _{0}^{A}}& \ast}
		\]
	and the $n$-slice is then defined as the homotopy fibre of $\nu _{n}^{A}$, i.e. the following is a homotopy fibre sequence
	in $\unstablehomotopyX$
		\[	s_{n}(A)\rightarrow t _{n+1}(A) \rightarrow t _{n}(A).
		\]
	Now, the only problem that remains is the construction of the model structures 
	$\nwbiratunsmotX$.  For this, we consider ($\stepone$) and rely on
	Theorem \ref{thm.orthogonal=wbirationalTspectra}.  Hence, it is natural to define $\nwbiratunsmotX$
	as the left Bousfield localization of the model category $\unsmot$ with respect to the set of maps 
	$WB_{n}$ described in \eqref{nwbirat.locmaps}.
	
	We now describe the contents of the paper.
	In section \ref{Introd2}, we construct the weakly birational motivic homotopy categories 
	$\nwbiratunsmotX$ as well as a natural generalization that we call birational motivic
	homotopy categories, and prove a comparison theorem between them when the base scheme 
	is a perfect field (see Definitions \ref{def.localizationAmod-Birat}, \ref{def.localizationAmod-weaklyBirat}; 
	Proposition \ref{prop.genericsmoothness-Quillenequiv} and Theorem \ref{tower.Quillenfunctors}).  
	In section \ref{sect-1}, we define the unstable
	slices and describe some of their properties (see Proposition \ref{prop.slices=>homotopy} and Theorem \ref{thm.slice.tower.derived}).  
	In section \ref{sect-2}, we describe the
	unstable slice spectral sequence (see Theorem \ref{thm.extended.uns.slice.exact.couple}).
	In section \ref{sect-4}, we study the behavior of the unstable slice tower with respect to transfers in the
	sense of Voevodsky, and construct an unstable slice tower for simplicial presheaves with transfers
	(see Proposition \ref{uns.tower.tr} and Theorem \ref{thm.comp.tran}).
	In section \ref{sect-5}, we work over the complex numbers in order to study the behavior of the unstable slice
	tower with respect to the functor of complex points, and compare it with the classical Postnikov tower of
	algebraic topology
	(see Proposition \ref{prop.complexPtower}, Theorem \ref{thm.Post.real.comp} and Remark
	\ref{rmk.evenPosttow.alggeom}).
	In section \ref{sect-3}, we characterize the unstable slices, study the unstable slice tower for the infinite 
	loop space of a stable slice, and carry out some computations of unstable slices
	(see Theorem \ref{thm.class.uns.slices}; Propositions \ref{slice.orth.fibran.del}, \ref{prop.inf.susp.comp.fibr};
	Example \ref{infdel.not.comp.slice};
	and Theorems \ref{thm.vanishing.effective.cats}, \ref{thm.slices.proj.space}, \ref{thm.vanishing.normal.bundle},
	\ref{thm.slices-thom}, \ref{thm.blowups}).
	
\end{section}
\begin{section}{Birational and Weakly Birational Motivic Homotopy Categories}
		\label{Introd2}
		
	In this section, we construct the birational and weakly birational motivic homotopy categories.
	These are defined as left Bousfield localizations of $\unsmot$ with respect to maps which are induced
	by open immersions with a numerical condition in the codimension of the closed complement (which is
	assumed to be smooth in the weakly birational case).  The left Bousfield localizations are constructed following
	Hirschhorn's approach \cite{MR1944041}.  In order to be able to apply Hirschhorn's techniques, it is necessary
	to know that $\unsmot$ is \emph{cellular} \cite[Def. 12.1.1]{MR1944041} and \emph{proper} 
	\cite[Def. 13.1.1]{MR1944041}.  
	
\begin{thm}[{see \cite[Thm. 4.1.1]{MR1944041}}]
		\label{Hirsch-Bousloc}
	Let $\mathcal A$ be a Quillen model category which is cellular and proper.  Let $L$ be a set of maps in $\mathcal A$.
	Then, the left Bousfield localization of $\mathcal A$ with respect to $L$ exists.	
\end{thm}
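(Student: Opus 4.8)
The plan is to follow the Bousfield--Smith construction, as in \cite{MR1944041}: keep the underlying category of $\mathcal A$, keep its cofibrations, declare the weak equivalences of $L_{L}\mathcal A$ to be the \emph{$L$-local equivalences} --- the maps $f$ such that $\mathrm{map}(f,W)$ is a weak equivalence of simplicial sets for every $L$-local object $W$, where $W$ is $L$-local when it is fibrant in $\mathcal A$ and $\mathrm{map}(g,W)$ is a weak equivalence for all $g\in L$ --- and define the fibrations of $L_{L}\mathcal A$ by the right lifting property against the maps that are simultaneously cofibrations and $L$-local equivalences. Several axioms are then inherited from $\mathcal A$ with no extra work: completeness and cocompleteness are unchanged; the two-out-of-three and retract properties of the $L$-local equivalences follow formally from the mapping-space characterisation; and since every weak equivalence of $\mathcal A$ is an $L$-local equivalence, the class of maps with the right lifting property against all cofibrations is the same in $\mathcal A$ and in $L_{L}\mathcal A$, which yields one of the two functorial factorisations and one of the two lifting axioms for free (the identification of this class with the ``trivial fibrations'' of $L_{L}\mathcal A$ being deferred to the last step).

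The technical heart is to realise $L_{L}\mathcal A$ as cofibrantly generated. For generating cofibrations one simply reuses the set $I$ of generating cofibrations of $\mathcal A$, which is legitimate because the cofibrations are unchanged. For the generating trivial cofibrations one must exhibit an honest \emph{set} $J_{L}$ of cofibrations which are $L$-local equivalences and whose closure under cobase change, transfinite composition and retracts is precisely the class of all cofibrations that are $L$-local equivalences; this is exactly where \emph{cellularity} is indispensable. Using that $\mathcal A$ is cellular one fixes a regular cardinal $\kappa$ larger than the number of cells in presentations of the domains and codomains of the maps in $I$ and in $L$, and larger than the smallness bounds furnished by cellularity, and then runs the cardinality argument: every cofibration that is an $L$-local equivalence is the $\kappa$-filtered colimit of its $\kappa$-presentable subcofibrations that are again $L$-local equivalences, so the set $J_{L}$ consisting of $J$ together with (representatives of isomorphism classes of) the relative $L$-horns cut down to $\kappa$-small size generates the desired class. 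The small object argument applied to $J_{L}$, whose domains are small by cellularity, then produces the second functorial factorisation.

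It remains to identify the $J_{L}$-injectives with the fibrations of $L_{L}\mathcal A$ and to check the last lifting axiom. That every map in $J_{L}$, and hence every $J_{L}$-cofibration, is an $L$-local equivalence rests on the fact that the class of cofibrations which are $L$-local equivalences is closed under cobase change --- and it is here that \emph{left properness} of $\mathcal A$ enters, through the gluing lemma for homotopy function complexes, since without it a pushout of such a cofibration along a map between non-cofibrant objects need not remain an $L$-local equivalence. Combining this closure with the factorisation of the previous paragraph and the usual retract argument shows that the trivial cofibrations of $L_{L}\mathcal A$ are exactly the $J_{L}$-cofibrations, whence the $J_{L}$-injectives are exactly the $L_{L}\mathcal A$-fibrations and, dually, the trivial fibrations of $\mathcal A$ and of $L_{L}\mathcal A$ coincide; the remaining lifting axiom is then immediate. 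Finally $L_{L}\mathcal A$ is again cellular --- needed so that the localization can be iterated, as in Section~\ref{Introd2} --- because $I$ is unchanged and $J_{L}$ was built from cellular data. The main obstacle throughout is the cardinality argument of the second paragraph: extracting an actual set $J_{L}$ rather than a proper class is the whole content of the theorem, and it is precisely why the cellularity and properness hypotheses are imposed.
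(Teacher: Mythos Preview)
The paper offers no proof of its own for this statement: it is recorded purely as a citation to Hirschhorn \cite[Thm.~4.1.1]{MR1944041}, with no accompanying argument. Your proposal therefore goes well beyond what the paper does, by attempting to summarise Hirschhorn's actual construction.

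As a high-level sketch of the Bousfield--Smith/Hirschhorn argument, your outline is broadly accurate: one keeps the cofibrations, defines the new weak equivalences via homotopy function complexes against $L$-local objects, and the substance of the theorem is indeed the cardinality argument that produces a \emph{set} $J_{L}$ generating the new trivial cofibrations, with cellularity supplying the smallness and bounded-cofibration ingredients and left properness ensuring closure of the $L$-local trivial cofibrations under cobase change. Two remarks are in order. First, your description of $J_{L}$ as ``$J$ together with relative $L$-horns cut down to $\kappa$-small size'' is more impressionistic than Hirschhorn's actual construction, which proceeds by first building an $L$-localization functor via a small object argument on horns over (cosimplicial resolutions of cofibrant approximations to) the maps in $L$, and then invoking the Bousfield--Smith cardinality argument (his \S4.5) to extract the set; the set $J_{L}$ is not quite as you describe it, though the spirit is right. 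Second, only \emph{left} properness is needed (as you correctly note), whereas the paper's statement hypothesises full properness; this is harmless overkill in the paper, not a defect in your sketch.
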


	For details and definitions about Bousfield localization we refer the reader to Hirschhorn's book \cite{MR1944041}.
	The following result guarantees the existence of left Bousfield localizations for the unstable motivic homotopy
	category $\unsmot$.
	
\begin{thm}
		\label{Tspectra-cellular}
	The Quillen model category $\unsmot$ is:
		\begin{enumerate}  
			\item  \label{Tspectra-cellular.a}  \emph{cellular} (see 
							\cite[Cor. 1.6]{MR2197578} or \cite[Thm. 2.3.2]{MR2807904}).
			\item  \label{Tspectra-celllular.b}  \emph{proper} (see \cite[Thm. 1.1]{MR1787949}).
		\end{enumerate}
\end{thm}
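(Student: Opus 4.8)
The plan is to verify separately the two halves of Hirschhorn's definitions — cellularity \cite[Def.~12.1.1]{MR1944041} and properness \cite[Def.~13.1.1]{MR1944041} — by realizing the motivic model structure on $\unsmot$ as an iterated left Bousfield localization of the global one.

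For part~(\ref{Tspectra-cellular.a}), I would start from the global (objectwise) projective model structure on $\unsmot$. Since $\unsmot$ is the category of pointed simplicial presheaves on the small site $Sm_{X}$, it is a pointed diagram category valued in pointed simplicial sets, hence cofibrantly generated with generating (trivial) cofibrations obtained by smashing the standard generating (trivial) cofibrations of pointed simplicial sets with representables $U_{+}$, $U\in Sm_{X}$. One checks directly that representables are small, that the relevant domains and codomains are compact, and that cofibrations, being in particular monomorphisms, are effective monomorphisms; hence the global structure is cellular, and it is also left proper. The Nisnevich-local structure is then the left Bousfield localization at the essentially small set of maps encoding Nisnevich descent, and the motivic structure is a further left Bousfield localization at the set $\{(\mathbb A^{1}_{X}\times U)_{+}\to U_{+}\mid U\in Sm_{X}\}$. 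By Theorem~\ref{Hirsch-Bousloc}, a left Bousfield localization of a cellular, left proper model category is again cellular and left proper, so applying this twice yields cellularity of $\unsmot$; this is precisely the argument of \cite[Cor.~1.6]{MR2197578} and \cite[Thm.~2.3.2]{MR2807904}. The same chain of localizations, together with the preservation of left properness under left Bousfield localization, gives the left-proper half of part~(\ref{Tspectra-celllular.b}); alternatively, in the Quillen-equivalent injective model structures every object is cofibrant, which already forces left properness.

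The genuinely non-formal step, and the one I expect to be the main obstacle, is right properness, since left Bousfield localization does not preserve it in general. Here one must show directly that pullbacks of Nisnevich-local (resp. motivic) weak equivalences along local fibrations are again weak equivalences. The mechanism is the existence of an explicit fibrant replacement detecting these weak equivalences — the Nisnevich-local replacement, built from the Brown--Gersten property of the Nisnevich topology, followed in the motivic case by a filtered colimit of the singular/$\mathrm{Ex}^{\mathbb A^{1}}$ construction — together with the compatibility of these constructions with the base change at issue. Carrying this out is Morel--Voevodsky's theorem \cite{MR1813224}, reproven within Jardine's framework in \cite[Thm.~1.1]{MR1787949}, which I would simply invoke for part~(\ref{Tspectra-celllular.b}).
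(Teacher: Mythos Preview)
The paper offers no proof of its own here: both parts are stated with inline citations and nothing more. So there is no ``paper's argument'' to compare against beyond those references.

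Your sketch is a reasonable outline of how such results are proved, but there is one mismatch worth flagging. You run the cellularity argument starting from the global \emph{projective} model structure, whereas $\unsmot$ in this paper is Jardine's \emph{injective} motivic model structure (the author says so explicitly later, e.g.\ in the proof of Theorem~\ref{tower.Quillenfunctors}, noting that every object of $\unsmot$ is cofibrant). Cellularity of the injective structure does not follow from your projective argument: in the injective case the cofibrations are all monomorphisms, and one must produce a generating set with suitably small domains by a bounded-cofibration/cardinality argument rather than by the explicit ``representable smash boundary inclusion'' description you give. That is what \cite[Thm.~2.3.2]{MR2807904} actually does, and it is not quite the argument you sketched. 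Your discussion of properness, on the other hand, identifies the correct division of labour (left properness is formal in the injective setting; right properness is the substantive input from \cite[Thm.~1.1]{MR1787949}) and matches what the cited reference provides.
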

		
	Now we describe the open immersions which will be used to construct the left Bousfield localizations
	of $\unsmot$.
	
\begin{defi}[{see \cite[section 7.5]{MR0338129}}]
		\label{def.codimension}
	Let $Y\in Sch_{X}$, and $Z$ a closed subscheme of $Y$.  The \emph{codimension}
	of $Z$ in $Y$, $codim_{Y}Z$ is the infimum (over the generic points $z_{i}$ of $Z$)
	of the dimensions of the local rings $\mathcal O _{Y, z_{i}}$. 
\end{defi}

	Since $X$ is Noetherian of finite Krull dimension and $Y$ is of finite type over $X$, $codim_{Y}Z$ is always finite.
		
\begin{defi}
		\label{def.localizing-maps}
	We fix an arbitrary integer  $n\geq 0$, and consider the following set of 
	open immersions which have a closed complement of codimension at least $n+1$
\begin{align*}
	B_{n}=\{ \iota _{U,Y}:& U_{+}\rightarrow Y_{+} \text{ open immersion } |\\														
											 & Y\in Sm_{X}; Y \text{ irreducible};
									     (codim_{Y}Y\backslash U)\geq n+1
	\}.
\end{align*}	
	The letter $B$ stands for birational.
\end{defi}

\begin{rmk}
		\label{rmk.Bn.set}
	The base scheme $X$ is Noetherian and of finite Krull dimension, therefore
	the smooth Nisnevich site $Sm_{X}$ is essentially small, i.e.
	there exists a small category $\mathcal S$ equivalent to $Sm_{X}$.  Hence, the collection
	of maps $\widetilde{B}_{n}$ in $\mathcal S$ which correspond to $B_{n}$ form a set.  Since the
	Bousfield localization of $\unsmot$ with respect to $\widetilde{B}_{n}$ is identical to the localization
	with respect to $B_{n}$, we will abuse terminology and simply say that $B_{n}$ is a set.
\end{rmk}

	Now we consider the left Bousfield localization of $\unsmot$ with respect to the set of
	open immersions $B_{n}$ described above.
	
\begin{defi}
		\label{def.localizationAmod-Birat}
	Let $n\geq 0$ be an arbitrary integer.
	\begin{enumerate}
		\item \label{def.localizationAmod-Birat.a}  Let $\nbiratunsmotX$ denote the
					left Bousfield localization of $\unsmot$ 
					with respect to the set of maps $B_{n}$.
		\item \label{def.localizationAmod-Birat.b}  Let $b^{(n)}_{u}$ denote its fibrant
				replacement functor and $\homotnbiratunsmotX$ its associated homotopy category.
	\end{enumerate}
	For $n\neq 0$ we will call $\homotnbiratunsmotX$ the \emph{codimension $n+1$-birational motivic homotopy
	 category}, and for $n=0$ we will call it the \emph{birational motivic homotopy category}.
\end{defi}

\begin{lem}
		\label{lemma.generators.becomeequiv.1}
	Let $n\geq 0$ be an arbitrary integer.  Then for every $a\geq 0$, the maps
		\[	S^{a}\wedge B_{n}=\{ S^{a}\wedge  
							\iota _{U,Y}: 
							\iota _{U,Y}\in B_{n} \}
		\]
	are weak equivalences in $\nbiratunsmotX$.
\end{lem}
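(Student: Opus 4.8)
The plan is to deduce the statement from two formal facts about the left Bousfield localization $\nbiratunsmotX$: that, by its universal property, every map in $B_{n}$ is already a weak equivalence there, and that $\nbiratunsmotX$ is again a pointed simplicial model category, so that smashing with a cofibrant pointed simplicial set is a left Quillen endofunctor of it. Here is how I would organize the argument.

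First, I would record the general setup. By Theorem \ref{Tspectra-cellular}, $\unsmot$ is a left proper cellular simplicial model category (it is pointed simplicial by Jardine's construction), so by Hirschhorn's theory \cite{MR1944041} (Theorem \ref{Hirsch-Bousloc}) the left Bousfield localization $\nbiratunsmotX$ with respect to $B_{n}$ exists; moreover it is again left proper, cellular, and \emph{simplicial} \cite{MR1944041}, it has the same underlying pointed category and the same cofibrations as $\unsmot$, and each $\iota_{U,Y}\in B_{n}$ becomes a weak equivalence in it. Since in Jardine's motivic model structure the cofibrations are the monomorphisms, every object of $\unsmot$, hence of $\nbiratunsmotX$, is cofibrant; in particular, for each $\iota_{U,Y}\in B_{n}$ the objects $U_{+}$ and $Y_{+}$ are cofibrant.

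Next I would invoke the simplicial structure of the localization: for any cofibrant pointed simplicial set $K$, the functor $K\wedge(-)\colon\nbiratunsmotX\to\nbiratunsmotX$ is a left Quillen functor, by the pushout-product axiom for the pointed simplicial model category $\nbiratunsmotX$; consequently it preserves weak equivalences between cofibrant objects. Applying this with $K=S^{a}$, which is a cofibrant pointed simplicial set, to the weak equivalence $\iota_{U,Y}\colon U_{+}\to Y_{+}$ between cofibrant objects shows that $S^{a}\wedge\iota_{U,Y}$ is a weak equivalence in $\nbiratunsmotX$, which is exactly the claim. (For $a=0$ there is nothing to prove; for $a\geq 1$ one may alternatively argue by induction via $S^{a}\wedge\iota_{U,Y}=S^{1}\wedge(S^{a-1}\wedge\iota_{U,Y})$, noting that $S^{a-1}\wedge U_{+}$ and $S^{a-1}\wedge Y_{+}$ remain cofibrant.)

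I do not expect any genuine obstacle here, since the content is entirely formal; the only point requiring care is the appeal to the fact that a left Bousfield localization of a pointed simplicial (left proper, cellular) model category is again pointed simplicial, so that $S^{a}\wedge(-)$ is really left Quillen on $\nbiratunsmotX$ and not merely on $\unsmot$. I would also remark that nothing specific to the maps $B_{n}$ enters: the same argument shows that $K\wedge f$ is a $B_{n}$-local weak equivalence for every $B_{n}$-local weak equivalence $f$ and every cofibrant pointed simplicial set $K$.
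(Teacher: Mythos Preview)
Your proposal is correct and matches the paper's proof essentially step for step: the paper also observes that $U_{+},Y_{+}$ are cofibrant, that $\iota_{U,Y}$ is a weak equivalence in $\nbiratunsmotX$ by construction, that the localization is simplicial by \cite[Thm.~4.1.1(4)]{MR1944041}, and then invokes Ken Brown's lemma to conclude that $S^{a}\wedge\iota_{U,Y}$ is a weak equivalence. Your phrasing via ``$K\wedge(-)$ is left Quillen, hence preserves weak equivalences between cofibrant objects'' is exactly Ken Brown's lemma in disguise.
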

\begin{proof}
	Let $\iota _{U,Y}
	\in B_{n}$.  We observe that $U_{+}, Y_{+}$ are cofibrant in $\nbiratunsmotX$,
	since they are cofibrant in $\unsmot$.   By construction,
	$\iota _{U,Y}$ is a weak equivalence  in 
	$\nbiratunsmotX$; and \cite[Thm. 4.1.1.(4)]{MR1944041} implies
	that $\nbiratunsmotX$ is a simplicial model category.
	Thus, it follows from
	Ken Brown's lemma (see \cite[Lem. 1.1.12]{MR1650134})
	that $S^{a}\wedge \iota _{U,Y}$ is also a weak
	 equivalence in $\nbiratunsmotX$ for every $a\geq 0$.
\end{proof}

\begin{prop}
		\label{prop.charac.fibrants1}
	Let $A$ be an arbitrary simplicial presheaf in $\unsmot$ and $n\geq 0$ an arbitrary integer.  If
	$A$ is fibrant in $\nbiratunsmotX$ then the following conditions hold:
		\begin{enumerate}
			\item	\label{prop.charac.fibrants1.a}  $A$ is fibrant in $\unsmot$.
			\item	\label{prop.charac.fibrants1.b}		For every $a\geq 0$
							 and every $\iota _{U,Y} \in B_{n}$,
							 the induced map
								\[ \xymatrix{\Hom _{\unstablehomotopyX}(S^{a}\wedge
										 Y_{+}, A) \ar[r]_-{\cong}^-{\iota _{U,Y}
										 ^{\ast}} &
										\Hom _{\unstablehomotopyX}(S^{a}
										\wedge U_{+}, A)}
								\]
							is an isomorphism.
		\end{enumerate}
\end{prop}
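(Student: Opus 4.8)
The plan is to obtain both statements formally, from Hirschhorn's description of left Bousfield localizations together with Lemma~\ref{lemma.generators.becomeequiv.1}, with no argument about explicit local objects needed.

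For~(\ref{prop.charac.fibrants1.a}): by Theorem~\ref{Hirsch-Bousloc} (\cite[Thm.~4.1.1]{MR1944041}) the model category $\nbiratunsmotX$ has the same underlying category and the same cofibrations as $\unsmot$, while every weak equivalence of $\unsmot$ remains a weak equivalence in $\nbiratunsmotX$. Hence every acyclic cofibration of $\unsmot$ is an acyclic cofibration of $\nbiratunsmotX$, so every fibration of $\nbiratunsmotX$ has the right lifting property with respect to all acyclic cofibrations of $\unsmot$, i.e.\ is a fibration of $\unsmot$. Applying this to the fibration $A\to\ast$ gives~(\ref{prop.charac.fibrants1.a}).

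For~(\ref{prop.charac.fibrants1.b}): fix $\iota_{U,Y}\in B_{n}$ and $a\geq 0$. By Lemma~\ref{lemma.generators.becomeequiv.1} the map $S^{a}\wedge\iota_{U,Y}$ is a weak equivalence of $\nbiratunsmotX$, hence an isomorphism in $\homotnbiratunsmotX$, so applying $\Hom_{\homotnbiratunsmotX}(-,A)$ shows that
		\[ (S^{a}\wedge\iota_{U,Y})^{\ast}\colon \Hom_{\homotnbiratunsmotX}(S^{a}\wedge Y_{+},A)\longrightarrow \Hom_{\homotnbiratunsmotX}(S^{a}\wedge U_{+},A) \]
is an isomorphism. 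To transport this to $\unstablehomotopyX$ I would use the Quillen adjunction $id\colon\unsmot\rightleftarrows\nbiratunsmotX\colon id$: since $U_{+},Y_{+}$ are cofibrant in $\unsmot$ and $S^{a}$ is a cofibrant pointed simplicial set, the objects $S^{a}\wedge U_{+}$ and $S^{a}\wedge Y_{+}$ are cofibrant in $\unsmot$ and are fixed by the left derived functor, while $A$, being fibrant in $\nbiratunsmotX$, is fixed by the right derived functor (again by part~(\ref{prop.charac.fibrants1.a})). The derived adjunction then supplies, naturally in the cofibrant object $C$ of $\unsmot$ and compatibly with precomposition by maps of $\unsmot$, an isomorphism $\Hom_{\homotnbiratunsmotX}(C,A)\cong\Hom_{\unstablehomotopyX}(C,A)$. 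Specializing to $C=S^{a}\wedge U_{+}$ and $C=S^{a}\wedge Y_{+}$ and combining with the isomorphism above yields~(\ref{prop.charac.fibrants1.b}).

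I do not foresee a genuine obstacle; the only points demanding care are quoting Hirschhorn accurately for the implication ``fibration of $\nbiratunsmotX$ $\Rightarrow$ fibration of $\unsmot$'', and verifying that the isomorphism $\Hom_{\homotnbiratunsmotX}(C,A)\cong\Hom_{\unstablehomotopyX}(C,A)$ is natural in $C$, so that it genuinely intertwines the two maps $(S^{a}\wedge\iota_{U,Y})^{\ast}$. Alternatively one can avoid the second model structure altogether: by part~(\ref{prop.charac.fibrants1.a}) and the standard fact that the fibrant objects of a left Bousfield localization are exactly the local objects that are already fibrant, $A$ is $B_{n}$-local, so the map of pointed simplicial mapping spaces $\mathrm{Map}(\iota_{U,Y},A)\colon \mathrm{Map}(Y_{+},A)\to \mathrm{Map}(U_{+},A)$ is a weak equivalence; since the simplicial structure on $\unsmot$ gives $\mathrm{Map}(S^{a}\wedge C,A)\cong \mathrm{Map}_{\ast}(S^{a},\mathrm{Map}(C,A))$, reading off $\pi_{0}$ (i.e.\ $\pi_{a}\,\mathrm{Map}(C,A)$) for $C=U_{+},Y_{+}$ gives~(\ref{prop.charac.fibrants1.b}) directly.
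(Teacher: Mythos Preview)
Your proposal is correct and follows essentially the same route as the paper: the paper's proof simply says that since $id:\unsmot\to\nbiratunsmotX$ is a left Quillen functor, both conclusions follow from the derived adjunction $(id,b_{u}^{(n)},\varphi)$ together with Lemma~\ref{lemma.generators.becomeequiv.1}. Your argument unpacks exactly these two ingredients---part~(\ref{prop.charac.fibrants1.a}) from the fact that the right adjoint of a left Quillen functor preserves fibrant objects, and part~(\ref{prop.charac.fibrants1.b}) from Lemma~\ref{lemma.generators.becomeequiv.1} plus the derived adjunction---so the content is identical, only more explicit.
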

\begin{proof}
	Since the identity functor
		\[	id: \unsmot \rightarrow \nbiratunsmotX
		\]
	is a left Quillen functor, the conclusion follows from the derived
	adjunction 
		\[		(id, b_{u}^{(n)},\varphi ):\unstablehomotopyX \rightarrow
					 \homotnbiratunsmotX
		\]
	together with Lemma \ref{lemma.generators.becomeequiv.1}.
\end{proof}

Now we consider open immersions where the closed complement is smooth.

\begin{defi}
		\label{def.localizing-maps2}
	We fix an arbitrary integer  $n\geq 0$, and consider the following set of 
	open immersions with smooth closed complement	of codimension at least $n+1$
\begin{align*}
	WB_{n}=\{ \iota _{U,Y}: &U_{+}\rightarrow Y_{+} \text{ open immersion } |\\
									& Y, Z=Y\backslash U \in Sm_{X}; Y \text{ irreducible};
									      (codim_{Y}Z)\geq n+1
	\}.
\end{align*}	
\end{defi}

\begin{rmk}
		\label{rmk.WBn.set}
	As in Remark \ref{rmk.Bn.set}, the collection
	of maps $\widetilde{WB}_{n}$ in $\mathcal S$ which correspond to $WB_{n}$ form a set.  Since the
	Bousfield localization of $\unsmot$ with respect to $\widetilde{WB}_{n}$ is identical to the localization
	with respect to $WB_{n}$, we will abuse terminology and simply say that $WB_{n}$ is a set.
\end{rmk}

Now we describe the Bousfield localizations which will be used for the construction of the
unstable slice tower.

\begin{defi}
		\label{def.localizationAmod-weaklyBirat}
	Let $n\geq 0$ be an arbitrary integer.
	\begin{enumerate}
		\item \label{def.localizationAmod-weaklyBirat.a}  Let $\nwbiratunsmotX$ denote the
			left Bousfield localization of $\unsmot$ with respect to the set of maps 
			$WB_{n}$.
		\item \label{def.localizationAmod-weaklyBirat.b}  Let $wb_{u}^{(n)}$ denote its fibrant
			replacement functor and $\homotnwbiratunsmotX$ its associated homotopy category.
	\end{enumerate}
	For $n\neq 0$ we will call $\homotnwbiratunsmotX$ the \emph{codimension $n+1$-weakly birational motivic  homotopy
	category},
	and for $n=0$ we will call it the \emph{weakly birational motivic homotopy category}.
\end{defi}

\begin{prop}
		\label{prop.charac.fibrants2}
	Let $A$ be an arbitrary simplicial presheaf in $\unsmot$ and $n\geq 0$ an arbitrary integer.  If
	$A$ is fibrant in $\nwbiratunsmotX$ then the following conditions hold:
		\begin{enumerate}
			\item	\label{prop.charac.fibrants2.a}  $A$ is fibrant in $\unsmot$.
			\item	\label{prop.charac.fibrants2.b}		For every $a\geq 0$
							 and every $\iota _{U,Y} \in WB_{n}$,
							 the induced map
								\[ \xymatrix{\Hom _{\unstablehomotopyX}(S^{a}\wedge
										 Y_{+}, A) \ar[r]_-{\cong}^-{\iota _{U,Y}
										 ^{\ast}} &
										\Hom _{\unstablehomotopyX}(S^{a}\wedge 
										U_{+}, A)}
								\]
							is an isomorphism.
		\end{enumerate}
\end{prop}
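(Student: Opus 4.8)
The plan is to transcribe, essentially verbatim, the proof of Proposition~\ref{prop.charac.fibrants1}, replacing the set $B_{n}$ by $WB_{n}$: every ingredient used there has an obvious counterpart in the smooth-complement setting. First I would record the $WB_{n}$-analogue of Lemma~\ref{lemma.generators.becomeequiv.1}, namely that for every $a\geq 0$ the maps $S^{a}\wedge WB_{n}=\{ S^{a}\wedge \iota _{U,Y}\mid \iota _{U,Y}\in WB_{n}\}$ are weak equivalences in $\nwbiratunsmotX$. The proof is the same one: $U_{+}$ and $Y_{+}$ are cofibrant in $\nwbiratunsmotX$ because they are cofibrant in $\unsmot$; each $\iota _{U,Y}\in WB_{n}$ is a weak equivalence in $\nwbiratunsmotX$ by the very definition of the localization; $\nwbiratunsmotX$ is a simplicial model category by \cite[Thm.\,4.1.1.(4)]{MR1944041}; and then Ken Brown's lemma (\cite[Lem.\,1.1.12]{MR1650134}) forces $S^{a}\wedge \iota _{U,Y}$ to be a weak equivalence as well.

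For the first assertion, the identity functor $id:\unsmot \rightarrow \nwbiratunsmotX$ is the (left Quillen) localization functor, so by the general theory of left Bousfield localizations (Hirschhorn \cite{MR1944041}) any object fibrant in $\nwbiratunsmotX$ is in particular fibrant in $\unsmot$.

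For the second assertion, I would invoke the derived adjunction $(id, wb_{u}^{(n)}, \varphi):\unstablehomotopyX \rightarrow \homotnwbiratunsmotX$. Because $A$ is fibrant in $\nwbiratunsmotX$, for every object $C$ cofibrant in $\unsmot$ the canonical map $\Hom _{\unstablehomotopyX}(C, A)\rightarrow \Hom _{\homotnwbiratunsmotX}(C, A)$ is a bijection (the right derived functor of $id$ is fully faithful, identifying $\homotnwbiratunsmotX$ with the full subcategory of $WB_{n}$-local objects of $\unstablehomotopyX$). Taking $C=S^{a}\wedge Y_{+}$ and $C=S^{a}\wedge U_{+}$, both cofibrant, and using that $S^{a}\wedge \iota _{U,Y}$ becomes invertible in $\homotnwbiratunsmotX$ by the analogue of Lemma~\ref{lemma.generators.becomeequiv.1}, one concludes that $\iota _{U,Y}^{\ast}$ induces an isomorphism on $\Hom _{\unstablehomotopyX}(S^{a}\wedge -, A)$, which is exactly condition (\ref{prop.charac.fibrants2.b}).

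I do not expect a genuine obstacle: the statement is a routine analogue of Proposition~\ref{prop.charac.fibrants1}. The only point that deserves a line of care is the identification $\Hom _{\unstablehomotopyX}(C,A)\cong \Hom _{\homotnwbiratunsmotX}(C,A)$ for $C$ cofibrant and $A$ fibrant in the localization --- equivalently, that the homotopy category of a left Bousfield localization embeds into $\unstablehomotopyX$ as the full subcategory of local objects, since the cofibrations and the simplicial mapping spaces are unchanged --- but this is standard, and together with the $WB_{n}$-version of Lemma~\ref{lemma.generators.becomeequiv.1} it yields the conclusion at once.
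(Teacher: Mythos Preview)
Your proposal is correct and follows exactly the approach the paper takes: the paper's proof of Proposition~\ref{prop.charac.fibrants2} simply says ``The proof is exactly the same as in Proposition~\ref{prop.charac.fibrants1},'' which in turn invokes the left Quillen functor $id:\unsmot\to\nwbiratunsmotX$, the derived adjunction $(id,wb_{u}^{(n)},\varphi)$, and the $WB_{n}$-analogue of Lemma~\ref{lemma.generators.becomeequiv.1}. Your write-up merely makes these steps explicit.
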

\begin{proof}
	The proof is exactly the same as in Proposition 
	\ref{prop.charac.fibrants1}.
\end{proof}

\begin{prop}
		\label{prop.genericsmoothness-Quillenequiv}
	Let $n\geq 0$ be an arbitrary integer.
	Assume that the base scheme $X=\mathrm{Spec}\; k$, with $k$ a perfect field.
	Then the Quillen adjunction:
		$$(id,id,\varphi):\nwbiratunsmotX \rightarrow \nbiratunsmotX
		$$
	is a Quillen equivalence.
\end{prop}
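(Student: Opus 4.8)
The plan is to show that the two left Bousfield localizations have the same local objects, so that the identity functor induces an equivalence on homotopy categories; by general Bousfield localization theory this is equivalent to the Quillen adjunction being a Quillen equivalence. Since $WB_{n}\subseteq B_{n}$, every $B_{n}$-local object is $WB_{n}$-local, so $\nbiratunsmotX$-fibrant objects are $\nwbiratunsmotX$-fibrant and the adjunction $(\mathrm{id},\mathrm{id},\varphi)$ is well-defined; the content is the reverse inclusion. Concretely, I would fix a simplicial presheaf $A$ which is fibrant in $\nwbiratunsmotX$ and prove that $\iota_{U,Y}^{\ast}\colon \Hom_{\unstablehomotopyX}(S^{a}\wedge Y_{+},A)\to \Hom_{\unstablehomotopyX}(S^{a}\wedge U_{+},A)$ is an isomorphism for \emph{every} $\iota_{U,Y}\in B_{n}$ (not just those with smooth closed complement), which by Propositions \ref{prop.charac.fibrants1} and \ref{prop.charac.fibrants2} identifies the fibrant objects of the two model structures and hence forces the identity functor to be the identity on localizations.

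The key geometric input is generic smoothness over a perfect field. Given $\iota_{U,Y}\in B_{n}$ with $Y\in Sm_{X}$ irreducible and $Z=Y\setminus U$ a closed subset of codimension $\geq n+1$ but \emph{not} necessarily smooth, I would stratify: over a perfect field $k$, the reduced scheme $Z$ contains a dense open smooth subscheme $Z^{\mathrm{sm}}$, and $Z_{\mathrm{sing}}:=Z\setminus Z^{\mathrm{sm}}$ is a closed subset of $Z$ of strictly smaller dimension (hence strictly larger codimension in $Y$), which is again generically smooth, and so on. This produces a finite filtration $Z=Z_{0}\supsetneq Z_{1}\supsetneq\cdots\supsetneq Z_{m}=\emptyset$ by closed subsets with each successive difference $Z_{j}\setminus Z_{j+1}$ smooth and closed in $Y\setminus Z_{j+1}\in Sm_{X}$, of codimension $\geq n+1$ there. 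Writing $U_{j}=Y\setminus Z_{j}$, the open immersion $U_{j+1}\hookrightarrow U_{j}$ has smooth closed complement $Z_{j}\setminus Z_{j+1}$ of codimension $\geq n+1$, i.e. it lies in $WB_{n}$ (here one should check irreducibility of $U_{j}$, which holds since it is a dense open in the irreducible $Y$, and if one needs the ambient to be irreducible one passes to the component meeting $U_{j+1}$). Since $A$ is $\nwbiratunsmotX$-fibrant, Proposition \ref{prop.charac.fibrants2}(\ref{prop.charac.fibrants2.b}) gives that $\Hom_{\unstablehomotopyX}(S^{a}\wedge(U_{j+1})_{+},A)\to \Hom_{\unstablehomotopyX}(S^{a}\wedge(U_{j})_{+},A)$ is an isomorphism for each $j$ and each $a\geq 0$; composing these finitely many isomorphisms shows $\Hom_{\unstablehomotopyX}(S^{a}\wedge U_{+},A)\xrightarrow{\ \cong\ }\Hom_{\unstablehomotopyX}(S^{a}\wedge Y_{+},A)$, which is exactly condition (\ref{prop.charac.fibrants1.b}) of Proposition \ref{prop.charac.fibrants1}.

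The main obstacle is making the dévissage above precise: one must be careful that the intermediate schemes $U_{j}$ and the smooth strata $Z_{j}\setminus Z_{j+1}$ actually lie in $Sm_{X}$ and satisfy the irreducibility hypothesis built into the definition of $WB_{n}$, and that the codimension never drops below $n+1$ — the latter is automatic since removing a smaller-dimensional closed subset only increases codimension, and the former needs the perfectness of $k$ to invoke generic smoothness of each reduced strata. A secondary point is to reduce from arbitrary $\iota_{U,Y}\in B_{n}$ to the case of the \emph{reduced} complement, and to note the stratification terminates because $\dim Z$ is finite. Once the fibrant objects of $\nwbiratunsmotX$ and $\nbiratunsmotX$ coincide as full subcategories of $\unstablehomotopyX$, the two left Bousfield localizations of $\unsmot$ have identical classes of local equivalences, so the identity functor is a left Quillen equivalence; the derived unit and counit are isomorphisms because on fibrant-cofibrant objects both functors are the identity.
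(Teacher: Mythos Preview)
Your proposal is correct and follows essentially the same approach as the paper: both exploit generic smoothness over a perfect field to factor an arbitrary $\iota_{U,Y}\in B_{n}$ through a finite chain of open immersions belonging to $WB_{n}$, by peeling off the singular locus of the closed complement. The only cosmetic difference is the framing---the paper argues by induction on $\dim Z$ that every map in $B_{n}$ is a $WB_{n}$-local equivalence (and then invokes the universal property of left Bousfield localization), whereas you argue the dual statement that every $WB_{n}$-local object is $B_{n}$-local; these are equivalent, and the underlying d\'evissage is the same.
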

\begin{proof}
	We have the following commutative diagram
		\[	\xymatrix{ & \unsmot \ar[dl]_-{id} \ar[dr]^-{id}& \\
							\nwbiratunsmotX \ar@{-->}[rr]_-{id}& & \nbiratunsmotX}
		\]
	where the solid arrows are left Quillen functors.  Clearly, 
	$WB_{n}\subseteq B_{n}$; thus, every $WB_{n}$-local equivalence is a $B_{n}$-local equivalence.
	Therefore,  the universal
	property of left Bousfield localizations 
	(see \cite[3.3.19.(1) and 3.1.1.(1)]{MR1944041}) implies that the horizontal 
	arrow is also a left Quillen functor.
	
	The universal property for left Bousfield localizations also implies that
	it is enough to show that all the maps $\iota _{U,Y}$ in $B_{n}$
	become weak equivalences in $\nwbiratunsmotX$.  
	We proceed by induction
	on the dimension of $Z=Y\backslash U$.  If $\kdim \; Z=0$, then
	$Z\in Sm_{X} $ since $k$ is a perfect field (and we are considering
	$Z$ with the reduced scheme structure), hence
	$\iota _{U,Y}\in WB_{n}$ and then a
	weak equivalence in $\nwbiratunsmotX$.
	
	If $\kdim \; Z>0$, then
	we consider the singular locus $Z_{s}$ of $Z$ over $X$.  We have that
	$\kdim \; Z_{s}<\kdim \; Z$ since $k$ is a perfect field.  Therefore, by
	induction on the dimension $\iota _{V,Y}$
	is a weak equivalence in $\nwbiratunsmotX$, where $V=Y\backslash
	Z_{s}$.  On the other hand, $\iota _{U,V}$
	is also a weak equivalence in $\nwbiratunsmotX$, since 
	it is in $B_{n}$ and its closed
	complement $V\backslash U=Z\backslash Z_{s}$ is smooth
	over $X$, by construction of $Z_{s}$.
	
	But $\iota _{U,Y}= 
	\iota _{V,Y}\circ \iota _{U,V}$; so
	by the two out of three property for weak equivalences we
	conclude that $\iota _{U,Y}$ is a weak equivalence
	in $\nwbiratunsmotX$.
\end{proof}

\begin{thm}
		\label{tower.Quillenfunctors}
	Let $n\geq 0$ be an arbitrary integer.  Then,
		\begin{enumerate}
			\item	\label{tower.Quillenfunctors.a}		The following diagram is a tower of left Quillen functors
						\[	\xymatrix{&&& \ar[dl]^-{id} \ar[dll]_-{id} \unsmot \ar[dr]_-{id} \ar[drr]^-{id} && \\
										\cdots \ar[r]_-{id} &  \nplusonebiratunsmotX \ar[r]_-{id}& \nbiratunsmotX \ar[r]_-{id}& 
										\cdots \ar[r]_-{id} & \onebiratunsmotX \ar[r]_-{id}& 
										\zerobiratunsmotX
										}						
						\]
					and
						\[	\xymatrix{&&& \ar@<-1ex>[dddl]_-{id} \ar@<-3ex>[dddll]_-{id} \unstablehomotopyX 
										 \ar@<1ex>[dddr]^-{id} \ar@<3ex>[dddrr]^-{id} && \\
										&&&&& \\
										&&&&& \\
										\cdots \ar@<1ex>[r]^-{id}  & \ar@<1ex>[l]^-{b_{u}^{(n+1)}}  
										\homotnplusonebiratunsmotX \ar@<1ex>[r]^-{id} \ar@<1ex>[uuurr]|{b_{u}^{(n+1)}}
										& \ar@<1ex>[l]^-{b_{u}^{(n)}}
										\homotnbiratunsmotX \ar@<1ex>[r]^-{id} \ar@<-1ex>[uuur]|{b_{u}^{(n)}} & \ar@<1ex>[l]^-{b_{u}^{(n-1)}}
										\cdots \ar@<1ex>[r]^-{id} & \ar@<1ex>[l]^-{b_{u}^{(1)}} \homotonebiratunsmotX 
										\ar@<1ex>[r]^-{id} \ar@<1ex>[uuul]|{b_{u}^{(1)}} & 
										\ar@<1ex>[l]^-{b_{u}^{(0)}} \homotzerobiratunsmotX \ar@<-1ex>[uuull]|{b_{u}^{(0)}}
										}						
						\]
					is the corresponding tower of associated homotopy categories.
			\item	\label{tower.Quillenfunctors.b}		The following diagram is a tower of left Quillen functors
						\[	\xymatrix{&&& \ar[dl]^-{id} \ar[dll]_-{id} \unsmot \ar[dr]_-{id} \ar[drr]^-{id} && \\
										\cdots \ar[r]_-{id} &  \nplusonewbiratunsmotX \ar[r]_-{id}& \nwbiratunsmotX \ar[r]_-{id}& 
										\cdots \ar[r]_-{id} & \onewbiratunsmotX \ar[r]_-{id}& 
										\zerowbiratunsmotX
										}						
						\]
					and
						\[	\xymatrix{&&& \ar@<-1ex>[dddl]_-{id} \ar@<-3ex>[dddll]_-{id} \unstablehomotopyX 
										 \ar@<1ex>[dddr]^-{id} \ar@<3ex>[dddrr]^-{id} && \\
										&&&&& \\
										&&&&& \\
										\cdots \ar@<1ex>[r]^-{id}  & \ar@<1ex>[l]^-{wb_{u}^{(n+1)}}  
										\homotnplusonewbiratunsmotX \ar@<1ex>[r]^-{id} \ar@<1ex>[uuurr]|{wb_{u}^{(n+1)}}
										& \ar@<1ex>[l]^-{wb_{u}^{(n)}}
										\homotnwbiratunsmotX \ar@<1ex>[r]^-{id} \ar@<-1ex>[uuur]|{wb_{u}^{(n)}} & 
										\ar@<1ex>[l]^-{wb_{u}^{(n-1)}}
										\cdots \ar@<1ex>[r]^-{id} & \ar@<1ex>[l]^-{wb_{u}^{(1)}} \homotonewbiratunsmotX 
										\ar@<1ex>[r]^-{id} \ar@<1ex>[uuul]|{wb_{u}^{(1)}} & 
										\ar@<1ex>[l]^-{wb_{u}^{(0)}} \homotzerowbiratunsmotX \ar@<-1ex>[uuull]|{wb_{u}^{(0)}}
										}						
						\]
					is the corresponding tower of associated homotopy categories.
		\end{enumerate}
\end{thm}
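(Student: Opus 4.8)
The plan is to deduce both statements formally from the construction of left Bousfield localizations and their universal property, following the pattern already used in the proof of Proposition~\ref{prop.genericsmoothness-Quillenequiv}.

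First I would record the set-theoretic inclusions $B_{n+1}\subseteq B_{n}$ and $WB_{n+1}\subseteq WB_{n}$: an open immersion $\iota_{U,Y}$ whose (reduced) closed complement has codimension $\geq n+2$ in $Y$ has, a fortiori, closed complement of codimension $\geq n+1$, and the smoothness hypothesis on the complement in the weakly birational case is unchanged. Each of the model categories $\nbiratunsmotX$, $\nwbiratunsmotX$ exists by Theorems~\ref{Hirsch-Bousloc} and~\ref{Tspectra-cellular} together with Remarks~\ref{rmk.Bn.set}, \ref{rmk.WBn.set}; by construction of a left Bousfield localization it has the same cofibrations as $\unsmot$, and its class of weak equivalences contains the weak equivalences of $\unsmot$. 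Consequently each identity functor $\unsmot\to\nbiratunsmotX$ and $\unsmot\to\nwbiratunsmotX$ preserves cofibrations and trivial cofibrations, hence is a left Quillen functor.

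For the horizontal maps, note that since $B_{n+1}\subseteq B_{n}$ every $B_{n}$-local object is $B_{n+1}$-local, so every $B_{n+1}$-local equivalence is a $B_{n}$-local equivalence; as the cofibrations of $\nplusonebiratunsmotX$ and $\nbiratunsmotX$ coincide (both being those of $\unsmot$), the identity functor $\mathrm{id}\colon\nplusonebiratunsmotX\to\nbiratunsmotX$ preserves cofibrations and trivial cofibrations and is therefore left Quillen. Equivalently, one may invoke the universal property of left Bousfield localizations (\cite[3.3.19.(1), 3.1.1.(1)]{MR1944041}) applied to the left Quillen functor $\mathrm{id}\colon\unsmot\to\nbiratunsmotX$, which sends the maps of $B_{n+1}$ to weak equivalences because $B_{n+1}\subseteq B_{n}$. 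The same reasoning gives that $\mathrm{id}\colon\nplusonewbiratunsmotX\to\nwbiratunsmotX$ is left Quillen. Since a composite of left Quillen functors is left Quillen and every triangle in the two diagrams is a composite of identity functors, both diagrams in (a) and (b) are towers of left Quillen functors.

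To pass to homotopy categories it suffices to apply the derived-adjunction calculus to each Quillen adjunction in the tower. Because the three classes of cofibrations (of $\unsmot$, $\nbiratunsmotX$, $\nwbiratunsmotX$) agree, any cofibrant replacement in $\unsmot$ is also a cofibrant replacement in each localization, so the left derived functor of each identity Quillen functor is (canonically naturally isomorphic to) the functor induced on homotopy categories by the identity on objects; and the right derived functor of $\mathrm{id}\colon\nbiratunsmotX\to\nplusonebiratunsmotX$ (respectively $\mathrm{id}\colon\nbiratunsmotX\to\unsmot$) is induced by the fibrant replacement $b_{u}^{(n)}$ (respectively by $wb_{u}^{(n)}$ in the weakly birational case), which is precisely how the backward horizontal arrows and the diagonal arrows are labelled. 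This produces the displayed towers of homotopy categories, which commute up to canonical natural isomorphism. I do not expect any genuine obstacle here; the only point that needs care is checking that left Bousfield localization leaves the cofibrations untouched, so that the forward functors on homotopy categories are literally ``identities'' and the backward functors are exactly the fibrant-replacement functors --- beyond that, everything is the formal bookkeeping of Quillen adjunctions applied to the inclusions $B_{n+1}\subseteq B_{n}$ and $WB_{n+1}\subseteq WB_{n}$.
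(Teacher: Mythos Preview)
Your proposal is correct and follows essentially the same route as the paper: use $B_{n+1}\subseteq B_{n}$ (resp.\ $WB_{n+1}\subseteq WB_{n}$) together with the universal property of left Bousfield localization \cite[3.3.19(1), 3.1.1(1)]{MR1944041} to get the left Quillen functors, then read off the derived adjunctions. The only small difference is that the paper dispatches the cofibrant-replacement issue by noting that in the injective model structure on $\unsmot$ every object is already cofibrant, so the identity literally is a cofibrant replacement functor; your more general observation that cofibrant replacements transfer along the localizations is fine but slightly more than is needed here.
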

\begin{proof}
	Since the model categories $\nbiratunsmotX$ are defined as left Bousfield localizations of $\unsmot$
	with respect to the set of maps $B_{n}$ (see Definition \ref{def.localizing-maps}),
	it follows that $id:\unsmot \rightarrow \nbiratunsmotX$ is a left Quillen functor.  On the other hand,
	we observe that $B_{n+1}\subseteq B_{n}$; hence every $B_{n+1}$-local equivalence is also a 
	$B_{n}$-local equivalence, and the universal property of left Bousfield localizations 
	(see \cite[3.3.19.(1) and 3.1.1.(1)]{MR1944041}) implies that
	$id:\nplusonebiratunsmotX \rightarrow \nbiratunsmotX$ is a left Quillen functor.  Finally, we observe
	that the identity functor is a cofibrant replacement functor in $\unsmot$ since we are using the injective model
	structure (see \cite[Thms. 2.1.1, 2.3.1 and 2.3.2 ]{MR2807904}).
	This proves (\ref{tower.Quillenfunctors.a}).  The proof of (\ref{tower.Quillenfunctors.b}) is exactly the same.
\end{proof}

\end{section}
\begin{section}{The Unstable Slice Filtration}
		\label{sect-1}
		
	In this section, we construct the unstable slice functors $s_{n}$ (for $n\geq 0$)
	as well as their respective liftings $\tilde{s}_{n}$ to the level of Quillen model categories.
	
\begin{lem}
		\label{lemma.weakequivalences.under.tower}
	Let $0\leq j\leq n$ be arbitrary integers, and $f:A\rightarrow B$ a map in $\unsmot$ which
	is a weak equivalence in $\nwbiratunsmotX$.  Then $f$ is also a weak equivalence in
	$\jwbiratunsmotX$.
\end{lem}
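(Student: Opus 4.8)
The plan is to deduce the statement from an inclusion of the localizing sets together with the universal property of left Bousfield localizations, mirroring the argument used for Theorem~\ref{tower.Quillenfunctors}.

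First I would record the elementary observation that $WB_{n}\subseteq WB_{j}$ whenever $0\leq j\leq n$: if $\iota _{U,Y}\in WB_{n}$, then the smooth closed complement $Z=Y\backslash U$ satisfies $codim_{Y}Z\geq n+1\geq j+1$, so $\iota _{U,Y}$ lies in $WB_{j}$ as well.  In particular every map of $WB_{n}$ is a weak equivalence in $\jwbiratunsmotX$ by the very definition of that left Bousfield localization, and its source and target $U_{+},Y_{+}$ are cofibrant in $\unsmot$ --- hence in $\jwbiratunsmotX$ --- since we work with the injective model structure, in which every object is cofibrant and the identity is a cofibrant replacement functor.

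Next I would appeal to the universal property of the left Bousfield localization $\nwbiratunsmotX$ of $\unsmot$ with respect to $WB_{n}$ (see \cite[3.3.19.(1) and 3.1.1.(1)]{MR1944041}): the identity $id:\unsmot \rightarrow \jwbiratunsmotX$ is a left Quillen functor which carries every element of $WB_{n}$ to a weak equivalence between cofibrant objects, and therefore $id:\nwbiratunsmotX \rightarrow \jwbiratunsmotX$ is itself a left Quillen functor.  To conclude, I would use that left Bousfield localization does not alter the class of cofibrations, so that all objects of $\nwbiratunsmotX$ remain cofibrant; thus $f:A\rightarrow B$ is a weak equivalence between cofibrant objects in $\nwbiratunsmotX$, and Ken Brown's lemma (see \cite[Lem. 1.1.12]{MR1650134}) implies that the left Quillen functor $id$ sends $f$ to a weak equivalence in $\jwbiratunsmotX$, as required.

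I do not anticipate any genuine obstacle: the argument is entirely formal once the inclusion $WB_{n}\subseteq WB_{j}$ is in hand, and the only points needing a word of care are the codimension bookkeeping giving that inclusion and the (routine) fact that cofibrancy is inherited by left Bousfield localizations.
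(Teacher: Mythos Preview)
Your proposal is correct and follows essentially the same route as the paper. The paper invokes Theorem~\ref{tower.Quillenfunctors} to obtain that $id:\nwbiratunsmotX\rightarrow\jwbiratunsmotX$ is a left Quillen functor and then appeals to the universal property of left Bousfield localizations; you instead unpack that tower theorem directly via the inclusion $WB_{n}\subseteq WB_{j}$ and make the final step explicit through Ken Brown's lemma, which is a cleaner way to phrase why the left Quillen functor $id$ preserves all weak equivalences between cofibrant objects.
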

\begin{proof}
	Theorem \ref{tower.Quillenfunctors} implies that the identity functor 
	$id:\nwbiratunsmotX \rightarrow \jwbiratunsmotX$ is
	a left Quillen functor, and by construction both model categories are left Bousfield localizations of
	$\unsmot$.  Hence by the universal property of left Bousfield localizations 
	(see \cite[3.3.19.(1) and 3.1.1.(1)]{MR1944041}) it follows that
	$f$ is also a weak equivalence in $\jwbiratunsmotX$.
\end{proof}

\begin{defi}
		\label{def.slicetower}
	Let $n\geq 1$ be an arbitrary integer.  We define inductively the following functors
		\[	\tau _{n}:\unsmot \rightarrow \unsmot
		\]
	together with natural transformations $\tau _{\leq n}:id \rightarrow \tau _{n}$ and
	$\nu _{n}:\tau _{n+1}\rightarrow \tau _{n}$  as follows:
		\begin{enumerate}
			\item \label{step1.slicetower}	$\tau _{1}$ is just $wb_{u}^{(0)}$, i.e. the fibrant replacement functor
						in $\zerowbiratunsmotX$, and $\tau _{\leq 1}$ is the natural transformation induced by the
						functorial factorization in $\zerowbiratunsmotX$
						of a map into a trivial cofibration followed by a fibration.  Thus, every map
						$f:A\rightarrow B$ in $\unsmot$, fits in the following commutative diagram
							\[	\xymatrix{A \ar[r]^-{\tau _{\leq 1}^{A}} \ar[d]_-{f} & 
							\tau _{1}A \ar[r] \ar[d]^-{\tau _{1}(f)} & \ast \\
							B \ar[r]_-{\tau _{\leq 1}^{B}} & \tau _{1}B \ar[r] & \ast }
							\]
			\item	\label{step2.slicetower}	Assume that $\tau _{n}$ and $\tau _{\leq n}$ are already defined.  Then,
						given an arbitrary simplicial presheaf $A\in \unsmot$, we factor the natural map
							\[	\tau _{\leq n}^{A}:A\rightarrow \tau _{n}A
							\]
						as a trivial cofibration (in $\nwbiratunsmotX$) followed by a fibration (in $\nwbiratunsmotX$)
							\[	\xymatrix{& \ar[dl]_-{\tau _{\leq n+1}^{A}} A \ar[dr]^-{\tau _{\leq n}^{A}}& \\
							\tau _{n+1}A \ar[rr]_-{\nu _{n}^{A}} && \tau _{n}A }
							\]
		\end{enumerate}
\end{defi}

\begin{rmk}
		\label{rmk.def.slice.tower}
	The functoriality of the factorizations in $\nwbiratunsmotX$, implies that $\tau _{n+1}$ is
	a functor in $\unsmot$ and that $\tau _{\leq n+1}: id\rightarrow \tau _{n+1}$, 
	$\nu _{n}:\tau _{n+1}\rightarrow \tau _{n}$ are natural transformations.
\end{rmk}

\begin{prop}
		\label{prop.properties.slice.tower}
	Let $n\geq 0$ be an arbitrary integer, and $A\in \unsmot$ be an arbitrary simplicial presheaf.  Then:
		\begin{enumerate}
			\item	\label{prop.properties.slice.tower.a}	The natural map 
						\[	\tau _{\leq n+1}^{A}:A\rightarrow \tau _{n+1}A
						\]
					is a trivial cofibration in $\nwbiratunsmotX$.
			\item	\label{prop.properties.slice.tower.b}	The natural map (for $n\geq 1$)
						\[	\nu _{n}^{A}:\tau _{n+1}A\rightarrow \tau _{n}A
						\]
					is a fibration in $\nwbiratunsmotX$, and
						\[	\tau _{1}A\rightarrow \ast
						\]
					is a fibration in $\zerowbiratunsmotX$.
			\item	\label{prop.properties.slice.tower.c}	The composition
						\[	\xymatrix{\tau _{n+1}A \ar[r]^-{\nu _{n}^{A}} & \tau _{n}A \ar[r]^-{\nu _{n-1}^{A}}& 
											\cdots \ar[r]& \tau _{2}A \ar[r]^{\nu _{1}^{A}}& \tau _{1}A \ar[r]& \ast}
						\]
					is a fibration in $\unsmot$ and $\nwbiratunsmotX$.
		\end{enumerate}
\end{prop}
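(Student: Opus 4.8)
The plan is to obtain (\ref{prop.properties.slice.tower.a}) and (\ref{prop.properties.slice.tower.b}) by simply reading off the construction in Definition \ref{def.slicetower}, and to deduce (\ref{prop.properties.slice.tower.c}) formally from the tower of left Quillen functors in Theorem \ref{tower.Quillenfunctors}(\ref{tower.Quillenfunctors.b}). For the first two parts there is essentially nothing to do: when $n=0$, the map $\tau_{\leq 1}^{A}:A\to\tau_{1}A$ together with $\tau_{1}A\to\ast$ form, by definition, the functorial factorization in $\zerowbiratunsmotX$ of the canonical map $A\to\ast$ into a trivial cofibration followed by a fibration, so $\tau_{\leq 1}^{A}$ is a trivial cofibration and $\tau_{1}A\to\ast$ a fibration in $\zerowbiratunsmotX$; and when $n\geq 1$, the maps $\tau_{\leq n+1}^{A}:A\to\tau_{n+1}A$ and $\nu_{n}^{A}:\tau_{n+1}A\to\tau_{n}A$ form, by definition, the functorial factorization in $\nwbiratunsmotX$ of $\tau_{\leq n}^{A}:A\to\tau_{n}A$ into a trivial cofibration followed by a fibration, which gives both assertions at once.

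The content is in part (\ref{prop.properties.slice.tower.c}), and here I would first record the bookkeeping fact on which everything rests. For $0\leq j\leq n$, the identity functor $id:\nwbiratunsmotX\to\jwbiratunsmotX$ is a composite of the left Quillen functors appearing in Theorem \ref{tower.Quillenfunctors}(\ref{tower.Quillenfunctors.b}), hence is itself left Quillen; therefore its right adjoint $id:\jwbiratunsmotX\to\nwbiratunsmotX$ is right Quillen and in particular preserves fibrations. Likewise $id:\unsmot\to\nwbiratunsmotX$ is left Quillen, so $id:\nwbiratunsmotX\to\unsmot$ preserves fibrations. In plain terms: any fibration in $\jwbiratunsmotX$ with $j\leq n$ is automatically a fibration in $\nwbiratunsmotX$, and any fibration in $\nwbiratunsmotX$ is automatically a fibration in $\unsmot$.

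With this in hand I would prove (\ref{prop.properties.slice.tower.c}) by induction on $n$. The base case $n=0$ is immediate: the composition is just $\tau_{1}A\to\ast$, which is a fibration in $\zerowbiratunsmotX$ by part (\ref{prop.properties.slice.tower.b}) and hence a fibration in $\unsmot$. For the inductive step, the tail $\tau_{n}A\to\cdots\to\tau_{1}A\to\ast$ is a fibration in $\nminonewbiratunsmotX$ by the induction hypothesis, hence a fibration in $\nwbiratunsmotX$ by the bookkeeping fact; and $\nu_{n}^{A}:\tau_{n+1}A\to\tau_{n}A$ is a fibration in $\nwbiratunsmotX$ by part (\ref{prop.properties.slice.tower.b}); composing these two fibrations in $\nwbiratunsmotX$ yields that $\tau_{n+1}A\to\tau_{n}A\to\cdots\to\tau_{1}A\to\ast$ is a fibration in $\nwbiratunsmotX$, and then a fibration in $\unsmot$ again by the bookkeeping fact. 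The argument is purely formal; the single point requiring care — and the only thing I would call an obstacle — is keeping the variance of the Quillen adjunctions straight, i.e. that fibrations become fewer as one descends the tower toward $\zerowbiratunsmotX$ and more numerous as one ascends toward $\unsmot$, so that a fibration manufactured in $\jwbiratunsmotX$ with $j\leq n$ may legitimately be regarded as a fibration in $\nwbiratunsmotX$ (but not conversely).
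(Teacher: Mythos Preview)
Your proof is correct and follows essentially the same approach as the paper. For parts (\ref{prop.properties.slice.tower.a}) and (\ref{prop.properties.slice.tower.b}) the paper likewise says they follow directly from Definition \ref{def.slicetower}; for part (\ref{prop.properties.slice.tower.c}) the paper uses the same key observation---that fibrations in $\jwbiratunsmotX$ with $j\leq n$ are fibrations in $\nwbiratunsmotX$, and fibrations in $\nwbiratunsmotX$ are fibrations in $\unsmot$---but phrases the argument directly (each $\nu_{j}^{A}$ and $\tau_{1}A\to\ast$ is a fibration in $\nwbiratunsmotX$, hence so is their composite) rather than packaging it as an induction.
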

\begin{proof}
	(\ref{prop.properties.slice.tower.a}) and (\ref{prop.properties.slice.tower.b}) follow directly from the
	construction of the functors $\tau _{n}$, and the natural transformations $\tau _{\leq n}$, $\nu _{n}$
	in Definition \ref{def.slicetower}.  
	
	(\ref{prop.properties.slice.tower.c}):  By construction $\nwbiratunsmotX$ is a left Bousfield localization of $\unsmot$.
	Thus, the identity functor $id:\nwbiratunsmotX \rightarrow \unsmot$ is a right 
	Quillen functor, so it is enough to show the result for $\nwbiratunsmotX$.
	For this, it suffices to show that $\tau _{1}A$ is fibrant in $\nwbiratunsmotX$ 
	and that all the maps $\nu _{n}^{A}$ are fibrations in $\nwbiratunsmotX$. But this follows from (\ref{prop.properties.slice.tower.b})
	above together with Theorem \ref{tower.Quillenfunctors}.
\end{proof}

\begin{cor}
	\label{cor.slice.tower.derived.a}
Let $n\geq 1$ be an arbitrary integer, and $A\in \unsmot$ be an arbitrary simplicial presheaf.  Then, the natural map 
	\[	\nu _{n}^{A}:\tau _{n+1}A\rightarrow \tau _{n}A
	\]
is a weak equivalence in $\nminonewbiratunsmotX$.
\end{cor}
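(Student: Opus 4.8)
The plan is to read off the claim directly from the factorization that \emph{defines} $\nu_{n}^{A}$ in Definition \ref{def.slicetower}(\ref{step2.slicetower}), combined with the two out of three property for weak equivalences, once everything has been transported from the model structure $\nwbiratunsmotX$ down to $\nminonewbiratunsmotX$. Since $n\geq 1$ we have $n-1\geq 0$, so $\nminonewbiratunsmotX$ is indeed one of the localizations constructed in Definition \ref{def.localizationAmod-weaklyBirat}, and moreover $0\leq n-1\leq n$, which is precisely the range in which Lemma \ref{lemma.weakequivalences.under.tower} lets us push a weak equivalence of $\nwbiratunsmotX$ forward to a weak equivalence of $\nminonewbiratunsmotX$.

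Concretely, I would start from the commutative triangle produced in Definition \ref{def.slicetower}(\ref{step2.slicetower}),
	\[	\xymatrix{& \ar[dl]_-{\tau _{\leq n+1}^{A}} A \ar[dr]^-{\tau _{\leq n}^{A}}& \\
					\tau _{n+1}A \ar[rr]_-{\nu _{n}^{A}} && \tau _{n}A ,}
	\]
so that $\nu_{n}^{A}\circ \tau_{\leq n+1}^{A}=\tau_{\leq n}^{A}$. By Proposition \ref{prop.properties.slice.tower}(\ref{prop.properties.slice.tower.a}) the map $\tau_{\leq n+1}^{A}$ is a trivial cofibration, in particular a weak equivalence, in $\nwbiratunsmotX$; and applying the same Proposition with $n$ replaced by $n-1$ (legitimate because $n-1\geq 0$), the map $\tau_{\leq n}^{A}$ is a trivial cofibration, in particular a weak equivalence, in $\nminonewbiratunsmotX$. (For $n=1$ this last statement is the assertion that $\tau_{\leq 1}^{A}:A\to \tau_{1}A$ is a trivial cofibration in $\zerowbiratunsmotX$, which holds by construction in Definition \ref{def.slicetower}(\ref{step1.slicetower}).)

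Finally I would invoke Lemma \ref{lemma.weakequivalences.under.tower} with $j=n-1$: since $\tau_{\leq n+1}^{A}$ is a weak equivalence in $\nwbiratunsmotX$, it is also a weak equivalence in $\nminonewbiratunsmotX$. At this point both $\tau_{\leq n+1}^{A}$ and $\tau_{\leq n}^{A}=\nu_{n}^{A}\circ\tau_{\leq n+1}^{A}$ are weak equivalences in $\nminonewbiratunsmotX$, so the two out of three axiom in $\nminonewbiratunsmotX$ forces $\nu_{n}^{A}$ to be a weak equivalence in $\nminonewbiratunsmotX$, which is the assertion. There is no serious obstacle here; the only point to watch is the index bookkeeping — that the hypothesis $n\geq 1$ is exactly what makes $\nminonewbiratunsmotX$ available and places us inside the range $0\leq n-1\leq n$ of Lemma \ref{lemma.weakequivalences.under.tower} — together with the fact that this compatibility of the two consecutive localizations rests ultimately on Theorem \ref{tower.Quillenfunctors}.
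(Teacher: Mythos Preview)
Your proposal is correct and follows essentially the same argument as the paper: both use the commutative triangle from Definition \ref{def.slicetower}(\ref{step2.slicetower}), combine Proposition \ref{prop.properties.slice.tower}(\ref{prop.properties.slice.tower.a}) with Lemma \ref{lemma.weakequivalences.under.tower} to see that $\tau_{\leq n+1}^{A}$ and $\tau_{\leq n}^{A}$ are weak equivalences in $\nminonewbiratunsmotX$, and then conclude by two out of three. Your version is slightly more explicit about the index bookkeeping and the $n=1$ edge case, but the substance is identical.
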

\begin{proof}
By construction, the following diagram commutes
	\begin{align*} \xymatrix{A \ar[r]^-{\tau _{\leq n}^{A}} \ar[d]_-{\tau _{\leq n+1}^{A}}& \tau _{n}A \\
									\tau _{n+1}A \ar[ur]_-{\nu _{n}^{A}}&}
	\end{align*}
Combining Lemma \ref{lemma.weakequivalences.under.tower} and Proposition \ref{prop.properties.slice.tower}\eqref{prop.properties.slice.tower.a}
we deduce that the maps $\tau _{\leq n+1}^{A}, \tau _{\leq n}^{A}$ are weak
equivalences in $\nminonewbiratunsmotX$.  Thus, by
the two out of three property for weak equivalences we conclude that $\nu _{n}A$ is a weak equivalence in
$\nminonewbiratunsmotX$.  
\end{proof}

\begin{defi}
		\label{def.constr.slice}
	\begin{enumerate}
		\item	\label{def.constr.slice.a}	Let $n\geq 1$ be an arbitrary integer, and
		$A$ be an arbitrary simplicial presheaf  in $\unsmot$.  
		Consider the natural map in $\unsmot$ (see Definition \ref{def.slicetower}).
			\[
				\nu _{n}^{A}:\tau _{n+1}A\rightarrow \tau _{n}A 
			\]
		Let $\tilde{s} _{n}A$ denote the pullback in $\unsmot$ of $\nu _{n}^{A}$ along the base point
		 of $\tau _{n}A$.
			\begin{align}
					\label{def.equation.constr.slice.a}
				\begin{array}{c}
					\xymatrix{\tilde{s} _{n}A \ar[d] 
								\ar[r]^-{i_{n}^{A}} & \tau _{n+1}A \ar[d]^-{\nu _{n}^{A}}\\
								\ast \ar[r] & \tau _{n}A}
				\end{array}
			\end{align}
		\item	\label{def.constr.slice.b}	If $n=0$, let $\tilde{s}	_{0}=\tau _{1}$.
	\end{enumerate}
\end{defi}

\begin{prop}
		\label{prop.slice=>functor}
	Let $n\geq 1$ be an arbitrary integer.  Then
	the construction 
	given in Definition \ref{def.constr.slice} defines a functor
		\[ \tilde{s} _{n} :\unsmot \rightarrow \unsmot
		\]
	and a natural transformation $i_{n}:\tilde{s} _{n}\rightarrow \tau _{n+1}$.
\end{prop}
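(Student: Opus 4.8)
The plan is to exploit that $\unsmot$, being the category of pointed simplicial presheaves on $Sm_{X}$, is complete, so that one may fix once and for all a functorial choice of fibre products; granting this, everything will follow formally from the universal property of the pullback together with the functoriality of $\tau _{n+1}$, $\tau _{n}$ and the naturality of $\nu _{n}$ recorded in Remark \ref{rmk.def.slice.tower}. Concretely, I would first fix a pullback functor on $\unsmot$, assigning to each cospan $C\rightarrow E\leftarrow D$ a chosen object $C\times _{E}D$ together with its two projections, depending functorially on maps of cospans. Applying this to the cospan consisting of $\nu _{n}^{A}:\tau _{n+1}A\rightarrow \tau _{n}A$ and the base point $\ast \rightarrow \tau _{n}A$ produces exactly the object $\tilde{s}_{n}A$ of Definition \ref{def.constr.slice}, equipped with the map $i_{n}^{A}:\tilde{s}_{n}A\rightarrow \tau _{n+1}A$ and the structure map $\tilde{s}_{n}A\rightarrow \ast$ appearing in \eqref{def.equation.constr.slice.a}.

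Next, given a map $f:A\rightarrow B$ in $\unsmot$, Remark \ref{rmk.def.slice.tower} yields a commutative square
	\[	\xymatrix{\tau _{n+1}A \ar[r]^-{\tau _{n+1}(f)} \ar[d]_-{\nu _{n}^{A}} & \tau _{n+1}B \ar[d]^-{\nu _{n}^{B}}\\
					\tau _{n}A \ar[r]_-{\tau _{n}(f)} & \tau _{n}B .}
	\]
Since every map in $\unsmot$ is pointed, $\tau _{n}(f)$ sends the base point of $\tau _{n}A$ to that of $\tau _{n}B$, so that the triangle on $\ast$, $\tau _{n}A$, $\tau _{n}B$ commutes; hence $\big(\tau _{n+1}(f),\,id_{\ast},\,\tau _{n}(f)\big)$ is a morphism of cospans from $(\tau _{n+1}A\rightarrow \tau _{n}A\leftarrow \ast)$ to $(\tau _{n+1}B\rightarrow \tau _{n}B\leftarrow \ast)$. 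The chosen pullback functor then produces a canonical map $\tilde{s}_{n}(f):\tilde{s}_{n}A\rightarrow \tilde{s}_{n}B$ which, by construction, is compatible with the projections; in particular $i_{n}^{B}\circ \tilde{s}_{n}(f)=\tau _{n+1}(f)\circ i_{n}^{A}$, which is precisely the naturality square for $i_{n}$. Preservation of identities and of composition by $\tilde{s}_{n}$, as well as the fact that $i_{n}$ is a well-defined natural transformation, are then immediate, either from the uniqueness clause in the universal property of the pullback applied together with the functoriality of $\tau _{n+1}$ and $\tau _{n}$, or simply from the functoriality of the fixed pullback functor.

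The only point that requires any care is the very first step, namely fixing a genuinely functorial pullback (equivalently, a functorial fibre over a base point), which is legitimate because $\unsmot$ is complete; once this choice is made there is no real obstacle, the argument being pure diagram-chasing. (The case $n=0$ is covered separately, as $\tilde{s}_{0}=\tau _{1}$ is already a functor by Definition \ref{def.slicetower}.)
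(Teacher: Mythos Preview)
Your argument is correct and is precisely the paper's own approach: the paper's proof is the one-line observation that $\nu_{n}$ is a natural transformation between functors on $\unsmot$ and that pullbacks in $\unsmot$ satisfy a universal property, and you have simply unpacked what that entails. Nothing further is needed.
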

\begin{proof}
	It follows immediately from the fact that $\nu _{n}$ is a natural transformation
	between functors in $\unsmot$, and the universal property of pullbacks in $\unsmot$.
\end{proof}

\begin{lem}
		\label{lem.slices.preserve.wequivs}
	Let $n\geq 1$ be an arbitrary integer.  Then:
		\begin{enumerate}
			\item	\label{lem.slices.preserve.wequivs.a}	The functor (see Definition \ref{def.slicetower})
						\[ \tau _{n} :\unsmot \rightarrow \unsmot
						\]
					maps weak equivalences in $\nminonewbiratunsmotX$ to weak equivalences in $\unsmot$.
			\item	\label{lem.slices.preserve.wequivs.b}	The functor 
			(see Proposition \ref{prop.slice=>functor})
						\[ \tilde{s}_{n} :\unsmot \rightarrow \unsmot
						\]
					maps weak equivalences in $\nwbiratunsmotX$ to weak equivalences in $\unsmot$.
		\end{enumerate}
\end{lem}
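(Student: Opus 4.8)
The plan is to treat the two parts in sequence, using part (\ref{lem.slices.preserve.wequivs.a}) as the main input for part (\ref{lem.slices.preserve.wequivs.b}). For part (\ref{lem.slices.preserve.wequivs.a}), I would argue by induction on $n$. For $n=1$ the functor $\tau_{1}=wb_{u}^{(0)}$ is the fibrant replacement functor in $\zerowbiratunsmotX$, and a fibrant replacement functor in any left Bousfield localization sends local weak equivalences to (honest) weak equivalences in the underlying model category $\unsmot$: a $\zerowbiratunsmotX$-weak equivalence between two objects becomes, after applying $wb_{u}^{(0)}$, a weak equivalence between fibrant objects in $\zerowbiratunsmotX$, and a weak equivalence between fibrant objects in a left Bousfield localization is already a weak equivalence in the original model structure (Hirschhorn \cite{MR1944041}, since the fibrant objects are local and a local equivalence between local objects is an underlying weak equivalence). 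For the inductive step, suppose $f:A\to B$ is a weak equivalence in $\nminonewbiratunsmotX$; by Lemma \ref{lemma.weakequivalences.under.tower} it is also a weak equivalence in $\jwbiratunsmotX$ for all $0\le j\le n-1$, in particular in $\nminonewbiratunsmotX$ with $n$ replaced by $n-1$, so by the inductive hypothesis $\tau_{n}(f)$ is a weak equivalence in $\unsmot$. Now $\tau_{n+1}A$ is obtained from the factorization of $\tau_{\le n}^{A}:A\to\tau_{n}A$ as a trivial cofibration in $\nwbiratunsmotX$ followed by a fibration in $\nwbiratunsmotX$; the map $f$ induces a map of such factorizations, and I need to see that $\tau_{n+1}(f)$ is an underlying weak equivalence. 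The cleanest route is: $\tau_{\le n+1}^{A}$ and $\tau_{\le n+1}^{B}$ are weak equivalences in $\nwbiratunsmotX$ (Proposition \ref{prop.properties.slice.tower}(\ref{prop.properties.slice.tower.a})), hence also in $\nminonewbiratunsmotX$ by Lemma \ref{lemma.weakequivalences.under.tower}, and $\tau_{n+1}(f)$ sits in a commuting square with $f$, $\tau_{\le n+1}^{A}$, $\tau_{\le n+1}^{B}$; two-out-of-three in $\nminonewbiratunsmotX$ then gives that $\tau_{n+1}(f)$ is a weak equivalence in $\nminonewbiratunsmotX$. But that is a statement about $\tau_{n+1}$ applied to a map that is already a weak equivalence in $\nminonewbiratunsmotX$, which is circular; the correct move is instead to observe that $\tau_{n+1}(f)$ is a weak equivalence in $\unsmot$ by two-out-of-three applied to the square
\[
\xymatrix{A \ar[r]^-{\tau_{\le n+1}^{A}} \ar[d]_-{f} & \tau_{n+1}A \ar[r]^-{\nu_{n}^{A}} \ar[d]^-{\tau_{n+1}(f)} & \tau_{n}A \ar[d]^-{\tau_{n}(f)} \\ B \ar[r]_-{\tau_{\le n+1}^{B}} & \tau_{n+1}B \ar[r]_-{\nu_{n}^{B}} & \tau_{n}B}
\]
together with: $\tau_{n}(f)$ is an underlying weak equivalence (inductive hypothesis), and the composite $\nu_{n}^{A}\circ\tau_{\le n+1}^{A}=\tau_{\le n}^{A}$, $\nu_{n}^{B}\circ\tau_{\le n+1}^{B}=\tau_{\le n}^{B}$. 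Since $\tau_{\le n}^{A}$ and $\tau_{\le n}^{B}$ are weak equivalences in $\nminonewbiratunsmotX$, and I want underlying weak equivalences, the key extra ingredient is that $\tau_{n+1}A$ and $\tau_{n+1}B$ are fibrant in $\nwbiratunsmotX$ (hence in $\nminonewbiratunsmotX$ and in $\unsmot$, by Proposition \ref{prop.charac.fibrants2}), so that a $\nminonewbiratunsmotX$-weak equivalence between them is an underlying weak equivalence; this is exactly the Hirschhorn principle invoked in the base case. Thus $\tau_{n+1}(f)$, being a map between $\nminonewbiratunsmotX$-fibrant objects which is a $\nminonewbiratunsmotX$-weak equivalence (by two-out-of-three from $\tau_{\le n+1}$ being a weak equivalence there and $f$ being one), is an underlying weak equivalence.

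For part (\ref{lem.slices.preserve.wequivs.b}), recall $\tilde{s}_{n}A$ is the pullback of $\nu_{n}^{A}:\tau_{n+1}A\to\tau_{n}A$ along the base point $\ast\to\tau_{n}A$, and $\nu_{n}^{A}$ is a fibration in $\nwbiratunsmotX$, hence (since $\nwbiratunsmotX$ is a left Bousfield localization, so its fibrations are a subclass of the $\unsmot$-fibrations) also a fibration in $\unsmot$. Given $f:A\to B$ a weak equivalence in $\nwbiratunsmotX$, we get by two-out-of-three and the above that $\tau_{n+1}(f)$ and $\tau_{n}(f)$ are weak equivalences in $\unsmot$: indeed a $\nwbiratunsmotX$-weak equivalence is in particular a $\nwbiratunsmotX$-weak equivalence (tautologically) and a $\nminonewbiratunsmotX$-weak equivalence by Lemma \ref{lemma.weakequivalences.under.tower}, so part (\ref{lem.slices.preserve.wequivs.a}) applies to give $\tau_{n}(f)$ an underlying weak equivalence, and the argument of part (\ref{lem.slices.preserve.wequivs.a}) gives $\tau_{n+1}(f)$ an underlying weak equivalence. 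Then $\tilde{s}_{n}(f):\tilde{s}_{n}A\to\tilde{s}_{n}B$ is the induced map of pullbacks of the diagrams $\ast\to\tau_{n}A\xleftarrow{\nu_{n}^{A}}\tau_{n+1}A$ and $\ast\to\tau_{n}B\xleftarrow{\nu_{n}^{B}}\tau_{n+1}B$; since $\unsmot$ is right proper (Theorem \ref{Tspectra-cellular}(\ref{Tspectra-celllular.b})), pullback along a fibration is homotopy invariant, so the map on pullbacks induced by the weak equivalence of diagrams $(\mathrm{id}_{\ast},\tau_{n}(f),\tau_{n+1}(f))$ is a weak equivalence in $\unsmot$. (Here one uses that the right legs $\nu_{n}^{A}, \nu_{n}^{B}$ are $\unsmot$-fibrations, and right properness in the form: a map of pullback squares over fibrations, which is a levelwise weak equivalence on the spans, induces a weak equivalence on pullbacks — e.g. via the gluing lemma for right proper model categories.) This completes (\ref{lem.slices.preserve.wequivs.b}).

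The main obstacle, and the point requiring care, is the inductive passage from $\tau_{n}$ to $\tau_{n+1}$ in part (\ref{lem.slices.preserve.wequivs.a}): the functor $\tau_{n+1}$ is defined by a functorial factorization in $\nwbiratunsmotX$ of a map that is only controlled in $\nminonewbiratunsmotX$, so one must be careful not to confuse "weak equivalence in $\nwbiratunsmotX$" with "weak equivalence in $\nminonewbiratunsmotX$." The resolution uses two facts in tandem: Lemma \ref{lemma.weakequivalences.under.tower} (weak equivalences descend along the tower) to feed $\tau_{\le n+1}^{A}, \tau_{\le n+1}^{B}$ — which are $\nwbiratunsmotX$-weak equivalences — into the $\nminonewbiratunsmotX$-level two-out-of-three argument, and Proposition \ref{prop.charac.fibrants2} together with the general Bousfield-localization fact (a local weak equivalence between local/fibrant objects is an underlying weak equivalence) to upgrade the resulting $\nminonewbiratunsmotX$-weak equivalence $\tau_{n+1}(f)$ between $\nminonewbiratunsmotX$-fibrant objects to an honest weak equivalence in $\unsmot$. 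Everything else is formal, using only right properness of $\unsmot$, the fact that fibrations in a left Bousfield localization are $\unsmot$-fibrations, and the universal property of pullbacks.
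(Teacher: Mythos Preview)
Your argument for part (\ref{lem.slices.preserve.wequivs.b}) is correct and essentially matches the paper's: once $\tau_{n+1}(f)$ and $\tau_{n}(f)$ are known to be weak equivalences in $\unsmot$, the induced map on fibres is a weak equivalence by right properness (the paper cites Quillen's \cite[I.3 Prop.~5(iii)]{MR0223432}, which is the same content).

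For part (\ref{lem.slices.preserve.wequivs.a}), however, there is a genuine error and an unnecessary complication. The complication is the induction: it is not needed, and it causes you to tangle the indices (you begin the inductive step with $f$ a weak equivalence in $\nminonewbiratunsmotX$, aiming at $\tau_{n}(f)$, but then argue about $\tau_{n+1}(f)$). The error is the parenthetical implication ``$\tau_{n+1}A$ and $\tau_{n+1}B$ are fibrant in $\nwbiratunsmotX$ (hence in $\nminonewbiratunsmotX$\ldots)''. This is the wrong direction: since $WB_{n}\subseteq WB_{n-1}$, the localization $\nminonewbiratunsmotX$ inverts \emph{more} maps than $\nwbiratunsmotX$, so fibrancy in $\nminonewbiratunsmotX$ is the \emph{stronger} condition, and fibrant in $\nwbiratunsmotX$ does not imply fibrant in $\nminonewbiratunsmotX$. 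Your Hirschhorn step therefore does not apply as written.

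The fix is to drop the induction and run the two-out-of-three argument at the correct level. Given $f$ a weak equivalence in $\nminonewbiratunsmotX$, the square
\[
\xymatrix{A \ar[r]^-{\tau_{\le n}^{A}} \ar[d]_-{f} & \tau_{n}A \ar[d]^-{\tau_{n}(f)} \\ B \ar[r]_-{\tau_{\le n}^{B}} & \tau_{n}B}
\]
has $\tau_{\le n}^{A},\tau_{\le n}^{B}$ weak equivalences in $\nminonewbiratunsmotX$ by Proposition~\ref{prop.properties.slice.tower}(\ref{prop.properties.slice.tower.a}), so $\tau_{n}(f)$ is a weak equivalence in $\nminonewbiratunsmotX$ by two-out-of-three. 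Now Proposition~\ref{prop.properties.slice.tower}(\ref{prop.properties.slice.tower.c}) gives that $\tau_{n}A,\tau_{n}B$ are fibrant in $\nminonewbiratunsmotX$ (not merely in $\nwbiratunsmotX$), and the Hirschhorn principle you cite then upgrades $\tau_{n}(f)$ to a weak equivalence in $\unsmot$. This is exactly the paper's proof; all the ingredients you identified are the right ones, but they assemble directly without any inductive scaffolding.
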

\begin{proof}
	  (\ref{cor.slices.preserve.wequivs.a}):  Let $f:A\rightarrow B$ be a weak equivalence in 
	  $\nminonewbiratunsmotX$.  We consider the following commutative diagram in
	$\nminonewbiratunsmotX$
		\[	\xymatrix{A \ar[rr]^-{f} \ar[d]_-{\tau ^{A}_{\leq n}} && B \ar[d]^-{\tau ^{B}_{\leq n}}\\
								\tau _{n}A \ar[rr]_-{\tau _{n}(f)} && \tau _{n}B}
		\]
	by hypothesis $f$ is a weak equivalence in $\nminonewbiratunsmotX$.
	On the other hand, it follows from Proposition
	\ref{prop.properties.slice.tower}\eqref{prop.properties.slice.tower.a} that 
	the vertical maps are also weak equivalences
	in $\nminonewbiratunsmotX$, and the two out of three property for weak equivalences implies
	that $\tau _{n}(f)$ is also a weak equivalence in $\nminonewbiratunsmotX$.  Finally, by Proposition
	\ref{prop.properties.slice.tower}(\ref{prop.properties.slice.tower.c})
	$\tau _{n}A$, $\tau _{n}B$ are both fibrant in
	$\nminonewbiratunsmotX$, thus \cite[Thm. 3.2.13(1) and 
	Prop. 3.4.1(1)]{MR1944041} 
	implies that $\tau _{n}(f)$ is
	a weak equivalence in $\unsmot$.
	
	(\ref{cor.slices.preserve.wequivs.b}):	Let $f:A\rightarrow B$ be a weak equivalence in 
	$\nwbiratunsmotX$.
	By construction (see Definition \ref{def.constr.slice}
	and Proposition \ref{prop.properties.slice.tower}(\ref{prop.properties.slice.tower.b})), the following are fibre 
	sequences in $\nwbiratunsmotX$
		\[	\xymatrix@R=0.5pt{\tilde{s}_{n} A \ar[r]& 
							\tau _{n+1}A \ar[r]^-{\nu _{n}^{A}} & 
							\tau _{n}A \\
							\tilde{s}_{n} B \ar[r]& 
							\tau _{n+1}B \ar[r]_-{\nu _{n}^{B}} & 
							\tau _{n}B}
		\]
	Now, Theorem \ref{tower.Quillenfunctors} implies that they are also fibre sequences 
	in $\unsmot$.  On the other hand,
	by naturality, $f$ induces a map between these fibre sequences
		\[	\xymatrix{\tilde{s}_{n} A \ar[r]
							\ar[d]^-{\tilde s_{n}(f)} & \tau _{n+1}A \ar[r]^-{\nu _{n}^{A}} 
							\ar[d]^-{\tau _{n+1}(f)}& \tau _{n}A \ar[d]^-{\tau _{n}(f)} \\
							\tilde{s}_{n} B \ar[r]& 
							\tau _{n+1}B \ar[r]_-{\nu _{n}^{A}} & 
							\tau _{n}B}
		\]
	Combining (\ref{lem.slices.preserve.wequivs.a}) above with Lemma \ref{lemma.weakequivalences.under.tower},
	we deduce that $\tau _{n+1}(f)$, $\tau _{n}(f)$ are weak equivalences in $\unsmot$.  Thus, the result
	follows from \cite[I.3 Prop. 5(iii)]{MR0223432}.
\end{proof}

\begin{cor}
		\label{cor.slices.preserve.wequivs}
	Let $n\geq 1$ be an arbitrary integer.  Then:
		\begin{enumerate}
			\item	\label{cor.slices.preserve.wequivs.a}	The functor (see Definition \ref{def.slicetower})
						\[ \tau _{n} :\unsmot \rightarrow \unsmot
						\]
					maps weak equivalences in $\unsmot$ to weak equivalences in $\unsmot$.
			\item	\label{cor.slices.preserve.wequivs.b}	The functor 
			(see Proposition \ref{prop.slice=>functor})
						\[ \tilde{s}_{n} :\unsmot \rightarrow \unsmot
						\]
					maps weak equivalences in $\unsmot$ to weak equivalences in $\unsmot$.
		\end{enumerate}
\end{cor}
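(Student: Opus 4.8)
The plan is to reduce Corollary \ref{cor.slices.preserve.wequivs} to Lemma \ref{lem.slices.preserve.wequivs} by observing that a weak equivalence in $\unsmot$ is in particular a weak equivalence in every left Bousfield localization. Concretely, for part (\ref{cor.slices.preserve.wequivs.a}): let $f:A\rightarrow B$ be a weak equivalence in $\unsmot$. Since $\nminonewbiratunsmotX$ is by construction a left Bousfield localization of $\unsmot$, the identity functor $id:\unsmot \rightarrow \nminonewbiratunsmotX$ is a left Quillen functor, and in particular it preserves weak equivalences between cofibrant objects; but every object of $\unsmot$ is cofibrant (the injective model structure, as recalled in the proof of Theorem \ref{tower.Quillenfunctors}), so $f$ is a weak equivalence in $\nminonewbiratunsmotX$. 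Hence Lemma \ref{lem.slices.preserve.wequivs}(\ref{lem.slices.preserve.wequivs.a}) applies and $\tau_n(f)$ is a weak equivalence in $\unsmot$.

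The proof of part (\ref{cor.slices.preserve.wequivs.b}) is identical with $\nwbiratunsmotX$ in place of $\nminonewbiratunsmotX$: a weak equivalence $f$ in $\unsmot$ is a weak equivalence in $\nwbiratunsmotX$ since the latter is a left Bousfield localization of $\unsmot$ and all objects are cofibrant, so Lemma \ref{lem.slices.preserve.wequivs}(\ref{lem.slices.preserve.wequivs.b}) gives that $\tilde{s}_n(f)$ is a weak equivalence in $\unsmot$. Alternatively, one can cite \cite[Prop. 3.1.5 and 3.3.3]{MR1944041} for the fact that the weak equivalences of $\unsmot$ are among the local weak equivalences of any left Bousfield localization; either way this is the single input needed.

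I expect no genuine obstacle here; the only point requiring a moment's care is the appeal to the injective model structure so that the left Quillen identity functor preserves \emph{all} weak equivalences (not merely those between cofibrant objects), and this is already established in the excerpt. Accordingly, here is the argument.

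\begin{proof}
	We first observe that any weak equivalence $f:A\rightarrow B$ in $\unsmot$ is automatically a
	weak equivalence in every left Bousfield localization of $\unsmot$.  Indeed, the identity functor
	from $\unsmot$ to such a localization is a left Quillen functor, hence preserves weak equivalences
	between cofibrant objects; since we are using the injective model structure on $\unsmot$, every object
	is cofibrant (see \cite[Thms. 2.1.1, 2.3.1 and 2.3.2]{MR2807904}), so $f$ is a weak equivalence in
	the localization (see also \cite[Prop. 3.1.5]{MR1944041}).

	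In particular, $f$ is a weak equivalence in $\nminonewbiratunsmotX$ and in $\nwbiratunsmotX$.  Thus
	(\ref{cor.slices.preserve.wequivs.a}) follows from Lemma
	\ref{lem.slices.preserve.wequivs}(\ref{lem.slices.preserve.wequivs.a}), and
	(\ref{cor.slices.preserve.wequivs.b}) follows from Lemma
	\ref{lem.slices.preserve.wequivs}(\ref{lem.slices.preserve.wequivs.b}).
\end{proof}
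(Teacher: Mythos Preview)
Your proof is correct and follows essentially the same approach as the paper: reduce to Lemma \ref{lem.slices.preserve.wequivs} by noting that a weak equivalence in $\unsmot$ is automatically a weak equivalence in any left Bousfield localization. The paper cites \cite[Prop.~3.3.3(1)(a)]{MR1944041} directly for this last fact, whereas you reach it via ``left Quillen functor $+$ all objects cofibrant''; both are fine, though the direct citation is slightly cleaner since the inclusion of weak equivalences into local weak equivalences holds by definition of a left Bousfield localization and does not require cofibrancy.
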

\begin{proof}
	Let $f:A\rightarrow B$ be a weak equivalence in $\unsmot$.  By lemma \ref{lem.slices.preserve.wequivs}
	it suffices to show that for every $n\geq 0$, $f$ is also a weak equivalence in $\nwbiratunsmotX$.
	But this follows from
	\cite[Prop. 3.3.3(1)(a)]{MR1944041}, since $\nwbiratunsmotX$ is defined as a left Bousfield localization of 
	$\unsmot$.
\end{proof}

We will write $\mathrm{Ho}:\unsmot \rightarrow \unstablehomotopyX ,$ for the homotopy functor
associated to the Quillen model category $\unsmot$.

\begin{prop}
		\label{prop.slices=>homotopy}
	Let $n\geq 1$ be an arbitrary integer.  Then:
		\begin{enumerate}
			\item	\label{prop.slices=>homotopy.a}	There exists a unique functor
						\[	t_{n} : \unstablehomotopyX \rightarrow \unstablehomotopyX
						\]
					such that the following diagram commutes:
						\[	\xymatrix{\unsmot \ar[r]^-{\tau_{n}} 
									\ar[d]_-{\mathrm{Ho}} & \unsmot \ar[d]^-{\mathrm{Ho}}\\
									\unstablehomotopyX \ar[r]_-{t_{n}} & 
									\unstablehomotopyX}
						\]
			\item	\label{prop.slices=>homotopy.b}	There exists a unique functor
						\[	s_{n} : \unstablehomotopyX \rightarrow \unstablehomotopyX
						\]
					such that the following diagram commutes:
						\[	\xymatrix{\unsmot \ar[r]^-{\tilde{s}_{n}} 
									\ar[d]_-{\mathrm{Ho}} & \unsmot \ar[d]^-{\mathrm{Ho}}\\
									\unstablehomotopyX \ar[r]_-{s_{n}} & 
									\unstablehomotopyX}
						\]
		\end{enumerate}
\end{prop}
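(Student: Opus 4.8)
The plan is to derive both functors from the universal property of the localization functor $\mathrm{Ho}:\unsmot \to \unstablehomotopyX$, namely that any functor $\unsmot \to \mathcal C$ sending weak equivalences (in $\unsmot$) to isomorphisms factors uniquely through $\mathrm{Ho}$. So the real content is verifying that the composites $\mathrm{Ho}\circ \tau_n$ and $\mathrm{Ho}\circ \tilde s_n$ invert the weak equivalences of $\unsmot$, after which uniqueness of the factorization forces uniqueness of $t_n$ (resp. $s_n$) making the square commute.

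First I would recall (or invoke) the universal property: since $\unsmot$ is a Quillen model category, $\mathrm{Ho}:\unsmot \to \unstablehomotopyX$ is the localization at the class $\mathcal W$ of weak equivalences, so for any category $\mathcal C$ and any functor $G:\unsmot \to \mathcal C$ with $G(\mathcal W)\subseteq$ isomorphisms, there is a unique $\bar G:\unstablehomotopyX \to \mathcal C$ with $\bar G\circ \mathrm{Ho}=G$. Apply this with $\mathcal C=\unstablehomotopyX$ and $G=\mathrm{Ho}\circ\tau_n$ in part (\ref{prop.slices=>homotopy.a}), and $G=\mathrm{Ho}\circ\tilde s_n$ in part (\ref{prop.slices=>homotopy.b}).

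Next I would check the hypothesis $G(\mathcal W)\subseteq$ isos in each case. For $\tau_n$: if $f:A\to B$ is a weak equivalence in $\unsmot$, then Corollary \ref{cor.slices.preserve.wequivs}(\ref{cor.slices.preserve.wequivs.a}) says $\tau_n(f)$ is again a weak equivalence in $\unsmot$, hence $\mathrm{Ho}(\tau_n(f))$ is an isomorphism in $\unstablehomotopyX$. For $\tilde s_n$: likewise Corollary \ref{cor.slices.preserve.wequivs}(\ref{cor.slices.preserve.wequivs.b}) gives that $\tilde s_n(f)$ is a weak equivalence in $\unsmot$, so $\mathrm{Ho}(\tilde s_n(f))$ is an isomorphism. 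Thus both composites factor uniquely, yielding $t_n$ and $s_n$ with commuting squares; uniqueness of $t_n$ and $s_n$ is exactly the uniqueness in the universal property (any other functor making the square commute would be a second factorization of the same $G$).

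The only mild subtlety — and the "main obstacle," though it is really bookkeeping — is making sure the two auxiliary results used are in force: Corollary \ref{cor.slices.preserve.wequivs} is stated for $n\geq 1$, which matches the hypothesis here, and it is proved via Lemma \ref{lem.slices.preserve.wequivs} plus the fact that every weak equivalence of $\unsmot$ remains one in each localization $\nwbiratunsmotX$ (Hirschhorn \cite[Prop. 3.3.3(1)(a)]{MR1944041}). Given that, the argument is immediate; I would write it in two or three sentences invoking the universal property of $\mathrm{Ho}$ and Corollary \ref{cor.slices.preserve.wequivs}.
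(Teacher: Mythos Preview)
Your proposal is correct and matches the paper's own proof, which is a one-line invocation of Corollary \ref{cor.slices.preserve.wequivs} together with the universal property of the homotopy functor $\mathrm{Ho}$ (citing Quillen \cite[I.1 Defs.\ 5, 6 and Thm.\ 1]{MR0223432}). Your write-up simply unpacks that sentence in more detail.
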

\begin{proof}
	It follows immediately from Corollary \ref{cor.slices.preserve.wequivs} together
	with the universal property of the homotopy functor $\mathrm{Ho}$ for a Quillen
	model category (see \cite[I.1 Defs. 5, 6 and Thm. 1]{MR0223432}).	 
\end{proof}

\begin{defi}
		\label{def.unstable.slice}
	We will call $s_{n}$ the \emph{unstable $n$-slice}, and $s_{0}=t_{1}$ will be
	the \emph{unstable zero slice}.
\end{defi}

\begin{prop}
	\label{slices.fit.fibreseqs}
Let $A\in \unstablehomotopyX$ be an arbitrary simplicial presheaf and $n\geq 1$ an arbitrary integer.
Then the following is a fibre sequence in $\unstablehomotopyX$ (see \cite[I.3 Def. 1]{MR0223432}):
	\[	\xymatrix{s_{n}A \ar[r]^-{i_{n}^{A}}& t_{n+1}A \ar[r]^-{\nu _{n}^{A}}& t_{n}A}
	\]
Furthermore, the map $s_{n}A\rightarrow \ast$ is a fibration in $\nwbiratunsmotX$.
\end{prop}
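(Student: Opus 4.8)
The plan is to read off both assertions directly from the pullback square \eqref{def.equation.constr.slice.a} defining $\tilde{s}_{n}A$, together with the fibrancy and fibration properties already recorded in Proposition \ref{prop.properties.slice.tower}. Recall that $\tilde{s}_{n}A$ is, by construction, the pullback in $\unsmot$ of $\nu_{n}^{A}\colon\tau_{n+1}A\to\tau_{n}A$ along the base point $\ast\to\tau_{n}A$, with $i_{n}^{A}\colon\tilde{s}_{n}A\to\tau_{n+1}A$ the corresponding projection (Proposition \ref{prop.slice=>functor}). By Proposition \ref{prop.properties.slice.tower}\eqref{prop.properties.slice.tower.b} the map $\nu_{n}^{A}$ is a fibration in $\nwbiratunsmotX$, and by Proposition \ref{prop.properties.slice.tower}\eqref{prop.properties.slice.tower.c} (for the indices $n$ and $n-1$) the composites $\tau_{n+1}A\to\ast$ and $\tau_{n}A\to\ast$ are fibrations in $\unsmot$, i.e.\ both objects are fibrant in $\unsmot$. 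Since $\nwbiratunsmotX$ is a left Bousfield localization of $\unsmot$, the identity functor $\nwbiratunsmotX\to\unsmot$ is right Quillen; hence $\nu_{n}^{A}$ is also a fibration in $\unsmot$, and $\tau_{n+1}A$, $\tau_{n}A$, $\ast$ are all fibrant in $\unsmot$.

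I would first settle the last assertion. The category underlying $\nwbiratunsmotX$ is $\unsmot$, and the two model structures have the same limits, so \eqref{def.equation.constr.slice.a} is also a pullback square in $\nwbiratunsmotX$. Fibrations in any model category are stable under base change, so $\tilde{s}_{n}A\to\ast$, being the pullback of the $\nwbiratunsmotX$-fibration $\nu_{n}^{A}$ along $\ast\to\tau_{n}A$, is a fibration in $\nwbiratunsmotX$. Since $s_{n}A=\mathrm{Ho}(\tilde{s}_{n}A)$ by Proposition \ref{prop.slices=>homotopy}\eqref{prop.slices=>homotopy.b}, this is exactly the statement that $s_{n}A\to\ast$ is a fibration in $\nwbiratunsmotX$.

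For the fibre sequence I would argue inside the pointed model category $\unsmot$. There $\nu_{n}^{A}\colon\tau_{n+1}A\to\tau_{n}A$ is a fibration between fibrant objects and $\tilde{s}_{n}A$ is its strict fibre over the base point; hence $\tilde{s}_{n}A$ is a model for the homotopy fibre of $\nu_{n}^{A}$ (this requires no properness, but also follows from the right properness of $\unsmot$, Theorem \ref{Tspectra-cellular}), so that $\tilde{s}_{n}A\to\tau_{n+1}A\to\tau_{n}A$ is a fibre sequence in $\unstablehomotopyX$ in the sense of Quillen \cite[I.3]{MR0223432}. Applying the homotopy functor $\mathrm{Ho}$ and using the identities $\mathrm{Ho}\circ\tau_{m}=t_{m}\circ\mathrm{Ho}$ (for $m\geq1$) and $\mathrm{Ho}\circ\tilde{s}_{n}=s_{n}\circ\mathrm{Ho}$ of Proposition \ref{prop.slices=>homotopy}, this fibre sequence is identified with $s_{n}A\to t_{n+1}A\to t_{n}A$ (with the maps $i_{n}^{A}$ and $\nu_{n}^{A}$), as required.

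The only point that needs care is the interplay between the localized structure $\nwbiratunsmotX$ and the ambient structure $\unsmot$ — namely that fibrations and fibrant objects of $\nwbiratunsmotX$ remain such in $\unsmot$, and that the pullback of Definition \ref{def.constr.slice} is insensitive to which of the two model structures it is formed in. Both are standard features of left Bousfield localizations (Hirschhorn \cite[Ch. 3]{MR1944041}). Everything else is a formal consequence of the pullback construction, so I do not expect any further obstacle.
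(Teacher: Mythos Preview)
Your proof is correct and follows essentially the same route as the paper: both use that the defining square \eqref{def.equation.constr.slice.a} is a pullback with $\nu_{n}^{A}$ a fibration in $\nwbiratunsmotX$, deduce that $\tilde{s}_{n}A\to\ast$ is a fibration in $\nwbiratunsmotX$ by stability of fibrations under base change, and then obtain the fibre sequence in $\unstablehomotopyX$ via the right Quillen functor $id:\nwbiratunsmotX\to\unsmot$. The only cosmetic difference is that the paper first records the fibre sequence in $\homotnwbiratunsmotX$ and then invokes Theorem~\ref{tower.Quillenfunctors} to pass to $\unstablehomotopyX$, whereas you transfer the fibration property of $\nu_{n}^{A}$ to $\unsmot$ first and then argue there directly; the content is the same.
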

\begin{proof}
By construction (see Definition \ref{def.constr.slice} and Proposition \ref{prop.properties.slice.tower}(\ref{prop.properties.slice.tower.b}))
the following diagram is cartesian in $\unsmot$:
	\begin{align*}
		\xymatrix{s_{n}A \ar[r]^-{i_{n}^{A}} \ar[d]& t_{n+1}A \ar[d]^-{\nu _{n}^{A}}\\
				 \ast \ar[r] & t_{n}A}
	\end{align*} 
and $\nu _{n}^{A}$ is a fibration in $\nwbiratunsmotX$.  Hence we conclude that $s_{n}A\rightarrow \ast$ is a fibration
in $\nwbiratunsmotX$ and that
$s_{n}A \rightarrow t_{n+1}A \rightarrow t_{n}A$
is a fibre sequence in $\homotnwbiratunsmotX$.  On the other hand, by Theorem 
\ref{tower.Quillenfunctors} we deduce that
$s_{n}A \rightarrow t_{n+1}A \rightarrow t_{n}A$ is also a fibre sequence in $\unstablehomotopyX$.
\end{proof}

\begin{prop}
	\label{tq.respect.order}
Let $A\in \unstablehomotopyX$ be an arbitrary simplicial presheaf and $n\geq j\geq 1$ arbitrary integers.
Then the maps in the following commutative diagram are isomorphisms in $\unstablehomotopyX$:
	\[	\xymatrix{&t_{j}A \ar[dl]_-{t_{j}(\tau_{\leq n+1}^{A})} \ar[dr]^-{t_{j}(\tau_{\leq n}^{A})}&\\
			t_{j}(t_{n+1}A) \ar[rr]^-{t_{j}(\nu _{n}^{A})}&& t_{j}(t_{n}A)}
	\]
\end{prop}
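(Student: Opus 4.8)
The plan is to identify the triangle in the statement as the image, under the composite functor $\mathrm{Ho}\circ\tau_j$, of the commutative triangle $\nu_n^A\circ\tau_{\leq n+1}^A=\tau_{\leq n}^A$ used in Definition \ref{def.slicetower}(\ref{step2.slicetower}) to define $\nu_n^A$, and then to observe that the two legs issuing from $t_jA$ are automatically isomorphisms, which forces the base of the triangle to be an isomorphism by two-out-of-three.

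First I would apply the functor $\tau_j:\unsmot\rightarrow\unsmot$ to that triangle in $\unsmot$ (the one with vertices $A$, $\tau_{n+1}A$, $\tau_nA$) and then pass to $\unstablehomotopyX$ along $\mathrm{Ho}$. Using the identity $\mathrm{Ho}\circ\tau_j=t_j\circ\mathrm{Ho}$ from Proposition \ref{prop.slices=>homotopy}(\ref{prop.slices=>homotopy.a}) (and the fact that $\mathrm{Ho}$ is the identity on objects, so that $\mathrm{Ho}(\tau_{n+1}A)=t_{n+1}A$ and $\mathrm{Ho}(\tau_nA)=t_nA$), this produces precisely the commutative diagram of the statement, with the relation $t_j(\nu_n^A)\circ t_j(\tau_{\leq n+1}^A)=t_j(\tau_{\leq n}^A)$ in $\unstablehomotopyX$.

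Next I would show the two legs are isomorphisms. By Proposition \ref{prop.properties.slice.tower}(\ref{prop.properties.slice.tower.a}) the map $\tau_{\leq n+1}^A$ is a trivial cofibration, hence a weak equivalence, in $\nwbiratunsmotX$; since $0\le j-1\le n$, Lemma \ref{lemma.weakequivalences.under.tower} makes it a weak equivalence in $\jminonewbiratunsmotX$ as well; Lemma \ref{lem.slices.preserve.wequivs}(\ref{lem.slices.preserve.wequivs.a}) then makes $\tau_j(\tau_{\leq n+1}^A)$ a weak equivalence in $\unsmot$, so that $t_j(\tau_{\leq n+1}^A)$ is an isomorphism in $\unstablehomotopyX$. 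The same reasoning applies to $\tau_{\leq n}^A$: since $n\ge 1$, Proposition \ref{prop.properties.slice.tower}(\ref{prop.properties.slice.tower.a}) with $n$ replaced by $n-1$ says it is a trivial cofibration in $\nminonewbiratunsmotX$; since $0\le j-1\le n-1$ (this is where the hypothesis $j\le n$ is used), Lemma \ref{lemma.weakequivalences.under.tower} again descends it to a weak equivalence in $\jminonewbiratunsmotX$, whence $\tau_j(\tau_{\leq n}^A)$ is a weak equivalence in $\unsmot$ and $t_j(\tau_{\leq n}^A)$ an isomorphism.

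Finally, from $t_j(\nu_n^A)\circ t_j(\tau_{\leq n+1}^A)=t_j(\tau_{\leq n}^A)$ and the fact that the two outer maps are isomorphisms, $t_j(\nu_n^A)$ is an isomorphism, which completes the argument. There is no genuine obstacle here: the only point requiring care is the bookkeeping of filtration indices, namely tracking in which weakly birational model structure each structural map of the tower is a weak equivalence, and checking that the chain of inequalities $0\le j-1\le n-1\le n$ licenses the two invocations of Lemma \ref{lemma.weakequivalences.under.tower}.
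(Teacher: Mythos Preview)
Your proof is correct and follows essentially the same approach as the paper. The only cosmetic difference is that the paper handles $\nu_n^A$ directly by citing Corollary \ref{cor.slice.tower.derived.a} (which already records that $\nu_n^A$ is a weak equivalence in $\nminonewbiratunsmotX$, hence in $\jminonewbiratunsmotX$ by Lemma \ref{lemma.weakequivalences.under.tower}), whereas you deduce that $t_j(\nu_n^A)$ is an isomorphism from the two legs via two-out-of-three in $\unstablehomotopyX$; since Corollary \ref{cor.slice.tower.derived.a} is itself proved by the same two-out-of-three on the same triangle, the arguments are equivalent.
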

\begin{proof}
By Lemma \ref{lem.slices.preserve.wequivs}\eqref{lem.slices.preserve.wequivs.a} it suffices to show that the maps $\tau_{\leq n+1}^{A}$, $\tau_{\leq n}^{A}$ and $\nu _{n}^{A}$ are weak
equivalences in $\jminonewbiratunsmotX$.  But this follows from Lemma \ref{lemma.weakequivalences.under.tower} together with Proposition 
\ref{prop.properties.slice.tower}\eqref{prop.properties.slice.tower.a} and Corollary \ref{cor.slice.tower.derived.a}, since $n\geq j$.
\end{proof}

\begin{cor}
	\label{cor.tq.respect.order}
Let $A\in \unstablehomotopyX$ be an arbitrary simplicial presheaf and $n\geq j\geq 1$ arbitrary integers.
Then $t_{j}A\cong t_{j}(t_{n}A)$ in $\unstablehomotopyX$.
\end{cor}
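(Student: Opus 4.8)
The plan is to read the conclusion directly off the commutative triangle produced by Proposition \ref{tq.respect.order}. Applied with the given integers $n\geq j\geq 1$, that proposition asserts in particular that the edge
\[
t_{j}\bigl(\tau_{\leq n}^{A}\bigr)\colon t_{j}A\longrightarrow t_{j}(t_{n}A)
\]
is an isomorphism in $\unstablehomotopyX$, which is exactly the assertion $t_{j}A\cong t_{j}(t_{n}A)$. So formally nothing is left to do beyond invoking the previous result and selecting the leg of the triangle that ends at $t_{j}(t_{n}A)$ (rather than the one ending at $t_{j}(t_{n+1}A)$).

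For a proof that does not quote the full triangle, I would argue directly. Applying Proposition \ref{prop.properties.slice.tower}\eqref{prop.properties.slice.tower.a} with $n-1$ in place of $n$ (legitimate since $n\geq 1$), the natural map $\tau_{\leq n}^{A}\colon A\to\tau_{n}A$ is a trivial cofibration, in particular a weak equivalence, in $\nminonewbiratunsmotX$; since $0\leq j-1\leq n-1$, Lemma \ref{lemma.weakequivalences.under.tower} shows it is also a weak equivalence in $\jminonewbiratunsmotX$. Then Lemma \ref{lem.slices.preserve.wequivs}\eqref{lem.slices.preserve.wequivs.a}, applied to the functor $\tau_{j}$, shows that $\tau_{j}(\tau_{\leq n}^{A})$ is a weak equivalence in $\unsmot$, hence by Proposition \ref{prop.slices=>homotopy}\eqref{prop.slices=>homotopy.a} it descends to an isomorphism $t_{j}A\cong t_{j}(t_{n}A)$ in $\unstablehomotopyX$. (One may equivalently induct on $n\geq j$: Corollary \ref{cor.slice.tower.derived.a} together with Lemma \ref{lem.slices.preserve.wequivs}\eqref{lem.slices.preserve.wequivs.a} shows that each $t_{j}(\nu_{m}^{A})$ with $j\leq m\leq n-1$ is an isomorphism, and composing these with the base isomorphism $t_{j}A\cong t_{j}(t_{j}A)$ gives the claim.) There is no substantive obstacle here; the only point requiring attention is the bookkeeping of which Bousfield localization each map is a weak equivalence in, and this is precisely what the hypothesis $n\geq j$ and Lemma \ref{lemma.weakequivalences.under.tower} take care of.
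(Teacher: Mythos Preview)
Your proposal is correct and takes essentially the same approach as the paper: the paper's proof is the single sentence ``This follows directly from Proposition \ref{tq.respect.order},'' which is precisely your first paragraph, selecting the edge $t_{j}(\tau_{\leq n}^{A})$ of the triangle. The alternative direct argument you give in the second paragraph is also correct (and in fact just unwinds the proof of Proposition \ref{tq.respect.order} itself), but it is not needed.
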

\begin{proof}
This follows directly from Proposition \ref{tq.respect.order}.
\end{proof}

The main result of this section is the following:

\begin{thm}
		\label{thm.slice.tower.derived}
	Let $A\in \unsmot$ be an arbitrary simplicial presheaf and $n\geq 1$
	an arbitrary integer.  Then:
		\begin{enumerate}
			\item	\label{thm.slice.tower.derived.a}	The following tower is functorial in $\unsmot$
						\[	\xymatrix@C=1.5pc{&&& \ar[d]|{\tau _{\leq n}^{A}} 
										\ar[dl]|{\tau _{\leq n+1}^{A}} \ar[dlll] A 
										\ar[drr]|{\tau _{\leq 2}^{A}} \ar[drrr]|{\tau _{\leq 1}^{A}} &&&& \\
										 \varprojlim \tau _{n}(A) \ar[r] &
										\cdots \ar[r]_-{\nu _{n+1}^{A}} &  \tau _{n+1}(A) \ar[r]_-{\nu _{n}^{A}} & 
										\tau _{n}(A) \ar[r]_-{\nu _{n-1}^{A}} & 
										\cdots \ar[r]_-{\nu _{2}^{A}} & \tau _{2}(A) \ar[r]_-{\nu _{1}^{A}}& 
										\tau _{1}(A) \ar[r]& \ast
										}
						\]
			\item	\label{thm.slice.tower.derived.b}	The following tower is functorial in $\unstablehomotopyX$
						\[	\xymatrix@C=1.5pc{&&& \ar[d]|{\tau _{\leq n}^{A}} 
										\ar[dl]|{\tau _{\leq n+1}^{A}} \ar[dlll] A 
										\ar[drr]|{\tau _{\leq 2}^{A}} \ar[drrr]|{\tau _{\leq 1}^{A}} &&& \\
										\mathrm{ho}\! \varprojlim t _{n}(A) \ar[r] &
										\cdots \ar[r]_-{\nu _{n+1}^{A}} &  t _{n+1}(A) \ar[r]_-{\nu _{n}^{A}} & 
										t _{n}(A) \ar[r]_-{\nu _{n-1}^{A}} & 
										\cdots \ar[r]_-{\nu _{2}^{A}} & t _{2}(A) \ar[r]_-{\nu _{1}^{A}}& 
										t _{1}(A)	\ar[r] & \ast
										}
						\]
			\item	\label{thm.slice.tower.derived.c} $\tau _{\leq n}^{A}$ is a weak equivalence in $\nminonewbiratunsmotX$
				and $\nu _{n}^{A}$ (resp. $t_{1}(A)\rightarrow \ast $) is a fibration in $\nwbiratunsmotX$
				(resp. $\zerowbiratunsmotX$).
		\end{enumerate}
\end{thm}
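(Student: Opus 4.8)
The plan is to read off parts (\ref{thm.slice.tower.derived.a}) and (\ref{thm.slice.tower.derived.c}) directly from the constructions already in place, and to reduce part (\ref{thm.slice.tower.derived.b}) to a single homotopy‑invariance statement about limits of towers of fibrations. Part (\ref{thm.slice.tower.derived.c}) is just a reindexing of Proposition \ref{prop.properties.slice.tower}: applying part (\ref{prop.properties.slice.tower.a}) with $n$ replaced by $n-1$ shows that $\tau_{\leq n}^{A}\colon A\to\tau_{n}A$ is a trivial cofibration, hence in particular a weak equivalence, in $\nminonewbiratunsmotX$, while part (\ref{prop.properties.slice.tower.b}) gives directly that $\nu_{n}^{A}$ is a fibration in $\nwbiratunsmotX$ and that $\tau_{1}A\to\ast$ is a fibration in $\zerowbiratunsmotX$. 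For part (\ref{thm.slice.tower.derived.a}), recall from Definition \ref{def.slicetower} and Remark \ref{rmk.def.slice.tower} that the $\tau_{n}$ arise from functorial factorizations in the model categories $\nwbiratunsmotX$, so each finite stage of the displayed tower, together with the structure maps $\tau_{\leq n}^{A}$ and $\nu_{n}^{A}$, is functorial in $A\in\unsmot$; since $\unsmot$ is a category of pointed simplicial presheaves it is complete, so $\varprojlim\tau_{n}(A)$ exists and is functorial in $A$, and the identities $\nu_{n}^{A}\circ\tau_{\leq n+1}^{A}=\tau_{\leq n}^{A}$ built into Definition \ref{def.slicetower} assemble into a functorial map $A\to\varprojlim\tau_{n}(A)$ completing the tower.

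For part (\ref{thm.slice.tower.derived.b}), Proposition \ref{prop.slices=>homotopy}(\ref{prop.slices=>homotopy.a}) supplies the functors $t_{n}\colon\unstablehomotopyX\to\unstablehomotopyX$ with $t_{n}\circ\mathrm{Ho}=\mathrm{Ho}\circ\tau_{n}$, so the natural transformations $\tau_{\leq n}$ and $\nu_{n}$ descend to $\unstablehomotopyX$ and give the finite part of the tower there. For the terminal object I would set $\mathrm{ho}\varprojlim t_{n}(A):=\mathrm{Ho}(\varprojlim\tau_{n}A)$, with structure map $A\to\mathrm{ho}\varprojlim t_{n}(A)$ the image under $\mathrm{Ho}$ of the map produced in part (\ref{thm.slice.tower.derived.a}). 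To make this well defined and functorial on $\unstablehomotopyX$ it suffices to check that $A\mapsto\varprojlim\tau_{n}A$ carries weak equivalences of $\unsmot$ to weak equivalences of $\unsmot$. Here is the one genuine point: by part (\ref{thm.slice.tower.derived.c}) each $\nu_{n}^{A}$ is a fibration in $\nwbiratunsmotX$, hence a fibration in $\unsmot$ since the identity $\nwbiratunsmotX\to\unsmot$ is right Quillen, and by Proposition \ref{prop.properties.slice.tower}(\ref{prop.properties.slice.tower.c}) each $\tau_{n}A$ is fibrant in $\unsmot$; thus $(\tau_{n}A)_{n\geq 1}$ is a tower of fibrations between fibrant objects. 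Since $\tau_{n}$ preserves weak equivalences of $\unsmot$ by Corollary \ref{cor.slices.preserve.wequivs}(\ref{cor.slices.preserve.wequivs.a}), a weak equivalence $f\colon A\to B$ induces a levelwise weak equivalence between two such towers, and the standard invariance of limits of towers of fibrations between fibrant objects (see \cite{MR1944041}, cf. \cite{MR1650134}) shows $\varprojlim\tau_{n}(f)$ is a weak equivalence; the same fibrancy shows $\varprojlim\tau_{n}A$ computes the homotopy limit of $(\tau_{n}A)_{n}$, hence of $(t_{n}A)_{n}$ in $\unstablehomotopyX$, which justifies the notation $\mathrm{ho}\varprojlim$.

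The main (and essentially only) obstacle is precisely this invariance step for the limit of the tower $(\tau_{n}A)_{n}$; everything else is bookkeeping, namely transporting the functorial factorizations of Definition \ref{def.slicetower} through $\mathrm{Ho}$ by means of Proposition \ref{prop.slices=>homotopy} and Corollary \ref{cor.slices.preserve.wequivs}, together with the reindexing of Proposition \ref{prop.properties.slice.tower}. One should also remark, for later use, that because the $\tau_{n}A$ are fibrant in $\unsmot$ and the $\nu_{n}^{A}$ are fibrations there, the derived and underived limits of the tower agree, so no separate fibrant‑replacement of the diagram is needed.
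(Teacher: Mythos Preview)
Your proof is correct and follows the same approach as the paper, invoking Definition \ref{def.slicetower}, Proposition \ref{prop.properties.slice.tower}, and Proposition \ref{prop.slices=>homotopy} for the three parts respectively. You supply considerably more detail than the paper does---in particular your justification that $\varprojlim \tau_{n}A$ computes the homotopy limit and is invariant under weak equivalences via the tower-of-fibrations argument is left entirely implicit there---but the underlying strategy is identical.
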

\begin{proof}
	(\ref{thm.slice.tower.derived.a}) follows directly from Definition 
	\ref{def.slicetower}, (\ref{thm.slice.tower.derived.b}) follows 
	from Definition \ref{def.slicetower} together with Proposition \ref{prop.slices=>homotopy}, and (\ref{thm.slice.tower.derived.c}) follows from
	Proposition \ref{prop.properties.slice.tower}.
\end{proof}

	We will call the tower in (\ref{thm.slice.tower.derived.b}), the \emph{unstable slice tower}.	

\begin{rmk}
		\label{rmk.gen.smoothness}
	If the base scheme is of the form $X=\mathrm{Spec}\; k$, with $k$ a perfect field, then
	an immediate consequence of Proposition \ref{prop.genericsmoothness-Quillenequiv} is
	the fact that all the constructions in this section are canonically isomorphic when they are
	carried out in the birational motivic categories $\nbiratunsmotX$.
\end{rmk}

\begin{rmk}
		\label{rmk.generic.tower}
	Notice that all the results in this section, and in particular Theorem \ref{thm.slice.tower.derived}, 
	hold if we have the following data:
	\begin{enumerate} 
		\item a model category $\mathcal N$ which is cellular and left proper.
		\item for every $n\geq 0$, a set of maps $L_{n}$ in $\mathcal N$ such that
			every weak equivalence in $L_{n+1}\mathcal N$ is also a weak equivalence in $L_{n}\mathcal M$,
			where $L_{n+1}\mathcal N$ (resp. $L_{n}\mathcal N$) is the left Bousfield localization of 
			$\mathcal N$ with respect to $L_{n+1}$ (resp. $L_{n}$).
	\end{enumerate}
\end{rmk}

\begin{rmk}
	\label{rmk.change.model.str}
In particular, if we choose a different Quillen model structure $\unsmot '$ for the category of pointed simplicial
presheaves on $Sm_{X}$ such that $\unsmot '$ and $\unsmot$ are Quillen equivalent, then all the results in 
this section hold for $\unsmot '$ as well, and the corresponding unstable slice towers of Theorem
\ref{thm.slice.tower.derived}\eqref{thm.slice.tower.derived.b} for $\unsmot '$ and $\unsmot$ are 
canonically isomorphic.
\end{rmk}
		
\end{section}
\begin{section}{The Unstable Slice Spectral Sequence}
		\label{sect-2}

In this section, we describe the spectral sequence associated to the unstable slice tower
constructed in Theorem \ref{thm.slice.tower.derived}.  We follow the approach
of Bousfield-Kan \cite[chapter IX]{MR0365573}.

In the rest of this section, $A$ (resp. $K$) will be an arbitrary fibrant (resp. cofibrant)
simplicial presheaf in $\unsmot$.  The set of morphisms between two objects in the motivic unstable homotopy
category $\unstablehomotopyX$ will be denoted with square brackets $[-,-]$.

\begin{defi}
		\label{def.E1-term}
	Given arbitrary integers $q\geq p\geq 0$, we consider:
		\begin{enumerate}
			\item \label{def.E1-term.a}
				$ D_{1}^{p,q}(K,A)=[S^{q-p}\wedge K, t_{p+1}A]
				$
			\item \label{def.E1-term.b}
				$	E_{1}^{p,q}(K,A)=[S^{q-p}\wedge K, s_{p}A]
				$
		\end{enumerate}
	and the following maps:
	\begin{enumerate}
			\item \label{thm.extended.uns.slice.exact.couple.c.a}
								$f^{p,q}(K,A):D_{1}^{p+1,q+1}(K,A)
								\rightarrow D_{1}^{p,q}(K,A)$ which is induced by
								$\nu _{p+1}^{A}$ (see Definition \ref{def.slicetower}):
									\[	\xymatrix{[S^{q-p}\wedge K, t_{p+2}A]
									 		\ar[r]^-{(\nu _{p+1}^{A})_{\ast}} & 
											[S^{q-p}\wedge K, t_{p+1}A]}
									\]
			\item \label{thm.extended.uns.slice.exact.couple.c.b} 
				$g^{p,q}(K,A):D_{1}^{p,q}(K,A)
				\rightarrow E_{1}^{p+1,q}(K,A)$ which is only defined for $q-p\geq 1$: 
	\[	\xymatrix{[S^{q-p}\wedge K, t_{p+1}A]\cong
		[S^{q-p-1}\wedge K, \Omega _{S^{1}}t_{p+1}A] \ar[r]& [S^{q-p-1}\wedge K, s_{p+1}A]}
	\]
and is induced by the boundary in the fibre sequence which characterizes the  unstable $p+1$-slice
(see Proposition \ref{slices.fit.fibreseqs}):
	\[ s_{p+1}A\rightarrow t_{p+2}A \rightarrow t_{p+1}A .
	\]
	
			\item \label{thm.extended.uns.slice.exact.couple.c.c}
								$h^{p,q}(K,A):E_{1}^{p,q}(K,A)
								\rightarrow D_{1}^{p,q}(K,A)$ which is induced by
								$i _{p}^{A}$ (see Proposition \ref{prop.slice=>functor}):
									\[	\xymatrix{[S^{q-p}\wedge K, s_{p}A]
									 		\ar[r]^-{(i_{p}^{A})_{\ast}} & 
											[S^{q-p}\wedge K, t_{p+1}A]}
									\]
								for $p\geq 1$, and $h^{0,q}(K,A)=id$ is the identity.
		\end{enumerate}
\end{defi}

\begin{rmk}
		\label{rmk.E1-term}
	Notice that $D_{1}^{p,q}(K,A), E_{1}^{p,q}(K,A)$ are just pointed sets for $p=q$, groups for $q=p+1$ and
	abelian groups for $q\geq p+2$.
\end{rmk}

\begin{thm}
		\label{thm.extended.uns.slice.exact.couple}
	The construction considered in Definition \ref{def.E1-term} forms
	an \emph{extended} exact couple (see \cite[section 4.2, chapter IX]{MR0365573}):
		\begin{equation}	
				\label{eqn.thm.extended.uns.slice.exact.couple}
			\begin{array}{c}
				\xymatrix{ D_{1}(K,A) \ar[rr]^-{f}&& D_{1}(K,A)\ar[dl]^-{g}\\
										& E_{1}(K,A) \ar[ul]_-{h} &}
			\end{array}
		\end{equation}
					
\end{thm}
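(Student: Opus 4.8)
The plan is to verify the three exactness conditions that define an extended exact couple in the sense of Bousfield--Kan \cite[\S4.2, ch.\,IX]{MR0365573}, working from the long exact sequences of the fibre sequences established in Proposition \ref{slices.fit.fibreseqs}. For each pair of integers $q\geq p\geq 0$ there is, by Proposition \ref{slices.fit.fibreseqs}, a fibre sequence $s_{p}A\to t_{p+1}A\xrightarrow{\nu_{p}^{A}}t_{p}A$ in $\unstablehomotopyX$; applying $[S^{\bullet}\wedge K,-]$ to it yields (for $\bullet$ in the appropriate range) a long exact sequence of pointed sets, groups, and abelian groups, which is precisely what the maps $f$, $g$, $h$ of Definition \ref{def.E1-term} assemble into. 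So first I would fix the bookkeeping: an element of $D_{1}^{p,q}$ is a homotopy class $[S^{q-p}\wedge K, t_{p+1}A]$, and the three maps shift the bidegree $(p,q)$ as recorded in Definition \ref{def.E1-term}, so that going once around the triangle $D_{1}\xrightarrow{f}D_{1}\xrightarrow{g}E_{1}\xrightarrow{h}D_{1}$ returns to the same spot; one checks this is consistent with the indexing of the three long exact sequences involved (for fixed $p$ one has the sequence of the $p$-th fibre sequence relating $D_{1}^{p,\ast}$, $D_{1}^{p+1,\ast}$ and $E_{1}^{p,\ast}$).

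Next I would establish the three exactness statements one at a time. Exactness at $E_{1}^{p,q}$ (i.e. $\ker h = \operatorname{im} g$) and exactness at the first $D_{1}^{p,q}$ copy along $f\circ\cdots$ relative to $h$, and exactness at the second $D_{1}^{p,q}$ copy (i.e. $\ker g=\operatorname{im} f$), all follow from the long exact homotopy sequence associated in $\unstablehomotopyX$ to the fibre sequence $s_{p}A\to t_{p+1}A\to t_{p}A$: the map $h$ is $(i_{p}^{A})_{\ast}$, the map $f$ is $(\nu_{p}^{A})_{\ast}$, and the map $g$ is the connecting homomorphism of that fibre sequence after the adjunction $[S^{q-p}\wedge K,t_{p+1}A]\cong[S^{q-p-1}\wedge K,\Omega_{S^{1}}t_{p+1}A]$. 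Since $\unsmot$ is a pointed simplicial (indeed proper, cellular) model category, Quillen's construction of the fibration/cofibration sequences \cite[I.3]{MR0223432} gives the Puppe-type exact sequence of the fibre sequence, and exactness of the couple is then a direct translation. The one point requiring care is the low-degree edge: for $p=0$ the map $h^{0,q}$ is set to be the identity and $g^{p,q}$ is only defined for $q-p\geq 1$, so I would check that the ``extended'' part of the definition (the bottom rows, where the objects are merely pointed sets or groups rather than abelian groups, and where the sequence terminates) matches Bousfield--Kan's axioms for an extended exact couple, in particular that exactness at the non-abelian spots is understood in the pointed-set / group sense (equality of a fibre of one map with the image of the previous).

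The main obstacle I anticipate is not any deep input but the careful handling of the unstable, non-additive tail of the couple: in degrees $q=p$ and $q=p+1$ the terms $D_{1}^{p,q}$ and $E_{1}^{p,q}$ are only pointed sets or (possibly nonabelian) groups, so ``exactness'' must be interpreted via the fibre-sequence formalism rather than via kernels and cokernels, and one must verify that the maps $f,g,h$ are defined and behave correctly exactly where Bousfield--Kan require them to be (this is why $g$ is restricted to $q-p\geq 1$ and $h^{0,q}$ is forced to be the identity). Once this indexing is pinned down, the proof is essentially the observation that for each $p$ the three families $D_{1}^{p,\ast}$, $D_{1}^{p+1,\ast}$, $E_{1}^{p,\ast}$ together with $f$, $g$, $h$ are nothing but the long exact sequence of $[S^{\ast}\wedge K,-]$ applied to $s_{p}A\to t_{p+1}A\to t_{p}A$, spliced over all $p$; invoking \cite[I.3 Def.\,1, Prop.\,3--5]{MR0223432} for the exactness of that sequence and Proposition \ref{slices.fit.fibreseqs} for the identification of the fibre sequence completes the argument.
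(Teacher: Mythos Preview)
Your approach is correct: the exactness of the couple does follow from the long exact sequences in $[S^{\ast}\wedge K,-]$ attached to the fibre sequences of Proposition \ref{slices.fit.fibreseqs}, and your discussion of the low-degree (pointed-set/nonabelian) tail is exactly the care required by Bousfield--Kan's ``extended'' axioms. There is a small indexing slip worth fixing: the long exact sequence of $s_{p}A\to t_{p+1}A\to t_{p}A$ relates $E_{1}^{p,\ast}$, $D_{1}^{p,\ast}$, and $D_{1}^{p-1,\ast}$ (not $D_{1}^{p+1,\ast}$), and the map $f^{p,q}$ is $(\nu_{p+1}^{A})_{\ast}$ rather than $(\nu_{p}^{A})_{\ast}$; once this is straightened out the splicing works exactly as you describe.

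The paper takes a different and somewhat slicker route. Rather than verifying the exact-couple axioms by hand in $\unstablehomotopyX$, it exploits the fact that $\unsmot$ is a simplicial model category: since $K$ is cofibrant and each $\nu_{n}^{A}$ is a fibration in $\unsmot$ (Theorems \ref{tower.Quillenfunctors}\eqref{tower.Quillenfunctors.b} and \ref{thm.slice.tower.derived}\eqref{thm.slice.tower.derived.c}), applying the simplicial mapping space $Map_{\ast}(K,-)$ to the tower $\{\tau_{n}(A)\}$ produces a tower of Kan fibrations of pointed simplicial sets. One then invokes the Bousfield--Kan construction \cite[ch.\,IX]{MR0365573} directly on that classical tower, and identifies the resulting terms with those of Definition \ref{def.E1-term} via the standard isomorphism $[S^{r}\wedge K,B]\cong\pi_{r}Map_{\ast}(K,B)$ for $B$ fibrant. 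This buys a shorter argument, since the extended-exact-couple axioms are verified once and for all in \cite{MR0365573}; your direct verification is longer but more self-contained and avoids the passage through simplicial mapping spaces.
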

\begin{proof}
	To construct the exact couple we consider the unstable slice tower for $A$ (see
	Theorem \ref{thm.slice.tower.derived}(\ref{thm.slice.tower.derived.a})):
		\[	\xymatrix@C=1.5pc{&&& \ar[d]|{\tau _{\leq n}^{A}} \ar[dl]|{\tau _{\leq n+1}^{A}} \ar[dlll] A 
										\ar[drr]|{\tau _{\leq 2}^{A}} \ar[drrr]|{\tau _{\leq 1}^{A}} &&&& \\
										 \varprojlim \tau _{n}(A) \ar[r] &
										\cdots \ar[r]_-{\nu _{n+1}^{A}} &  \tau _{n+1}(A)
										 \ar[r]_-{\nu _{n}^{A}} & 
										\tau _{n}(A) \ar[r]_-{\nu _{n-1}^{A}} & 
										\cdots \ar[r]_-{\nu _{2}^{A}} & \tau _{2}(A) \ar[r]_-{\nu _{1}^{A}}& 
										\tau _{1}(A) \ar[r]& \ast
										}
						\]
	Combining Theorems \ref{thm.slice.tower.derived}\eqref{thm.slice.tower.derived.c} and
	\ref{tower.Quillenfunctors}\eqref{tower.Quillenfunctors.b} we deduce that the horizontal arrows in the diagram
	above are fibrations in $\unsmot$.
	Since $\unsmot$ is a simplicial model category (see \cite[Prop. 2.3.7]{MR2807904}) and  $K$ is cofibrant in 
	$\unsmot$, we obtain the following tower of fibrations of pointed simplicial sets
		\[	\xymatrix@C=1pc{&& \ar[d]|{(\tau _{\leq n}^{A})_{\ast}}  \ar[dll] Map_{\ast}(K,A) 
										\ar[drr]|{(\tau _{\leq 1}^{A})_{\ast}}  &&& \\
										 Map_{\ast}(K,\varprojlim \tau _{n}(A)) \ar[r] &
										\cdots 
										 \ar[r]_-{(\nu _{n}^{A})_{\ast}} & 
										Map_{\ast}(K,\tau _{n}(A)) \ar[r]_-{(\nu _{n-1}^{A})_{\ast}} & 
										\cdots 
										 \ar[r]_-{(\nu _{1}^{A})_{\ast}}& 
										Map_{\ast}(K,\tau _{1}(A)) \ar[r]& \ast
										}
						\]
	Finally, we obtain the extended exact couple using the construction of Bousfield-Kan 
	\cite[chapter IX]{MR0365573} together with the following canonical isomorphisms
	(see \cite[Lem. 6.1.2]{MR1650134}) that exist in any simplicial model category with $K$ cofibrant
	and $B$ fibrant
		\[	[S^{r}\wedge K,B]=\Hom _{\unstablehomotopyX}(S^{r}\wedge K,B)
				\cong \pi _{r,\eta} Map_{\ast}(K,B)
		\]
	 where $\eta$ is the base point of
	$Map_{\ast}(K,B)$ and for $r\geq 1$, $\pi _{r,\eta}$ denotes the $r$-th homotopy
	group considering $\eta$ as base point.
\end{proof}

\end{section}
\begin{section}{Transfers and Kahn-Sujatha Birational Motives}
		\label{sect-4}

	In this section we study the unstable analogue of the Kahn-Sujatha construction of birational 
	motives \cite{K-theory/0596}.	
	
	Let $Cor(X)$ denote the Suslin-Voevodsky category of finite correspondences over $X$; having same
	objects as $Sm_{X}$, morphisms $c(Y,Z)$ given by the group of finite relative cycles on
	$Y\times _{X}Z$ over $Y$ \cite{MR1764199}, and composition as in 
	\cite[p.\,673 diagram (2.1)]{MR2804268}.
	A presheaf with transfers is an additive contravariant functor from $Cor(X)$ to the category of abelian groups.
	Let $\unsmottran$ denote the category of simplicial presheaves with transfers.  
	
	We will write $Y^{tr}$ for the
	simplicial presheaf with transfers represented by $Y$ in $Sm_{X}$.  By taking the graph of a morphism in
	$Sm_{X}$, we obtain a functor $\Gamma :Sm_{X}\rightarrow Cor(X)$.  This induces a forgetful functor:
		\[	\mathcal U : \unsmottran \rightarrow \unsmot
		\]
	which admits a left adjoint
		\[	\mathbb Z _{tr}:\unsmot \rightarrow \unsmottran
		\]
	such that $\mathbb Z_{tr}(Y_{+})=Y^{tr}$ for $Y\in Sm_{X}$.
	
	Moreover, the adjunction \cite[Thm. 8 and Lem. 9]{MR2435654}:
		\begin{align}
				\label{tr.bas.adj}	
			(\mathbb Z _{tr},\mathcal U, \varphi ):\unsmot \rightarrow \unsmottran
		\end{align}
	induces a Quillen model structure on $\unsmottran$ 
	(see Definition \ref{def.ind.Qmod}).  
	We will abuse notation, and write $\unsmottran$ 
	(resp. $\unstablehomotopyX ^{tr}$)
	for the category of simplicial presheaves with transfers equipped with this particular Quillen model structure
	(resp. for the associated homotopy category).
	
	Proceeding as in Definition \ref{def.localizationAmod-Birat} (resp. \ref{def.localizationAmod-weaklyBirat}) we can consider for 
	$n\geq 0$ the left Bousfield localization $\nbiratunsmotX ^{tr}$ (resp. $\nwbiratunsmotX ^{tr}$) of $\unsmottran$ with respect 
	to the set $\mathbb Z _{tr}(B_{n})$ (resp. $\mathbb Z _{tr}(WB_{n})$).  Mimicking the same arguments as in sections 
	\ref{Introd2} and \ref{sect-1}, we deduce that $\nbiratunsmotX ^{tr}$ and $\nwbiratunsmotX ^{tr}$ are Quillen equivalent
	when the base scheme is a perfect field (see Proposition 
	\ref{prop.genericsmoothness-Quillenequiv}), we obtain towers as in 
	Theorems \ref{tower.Quillenfunctors}, \ref{thm.slice.tower.derived} and finally an 
	extended exact couple as in Definition \ref{def.E1-term}  (see Remark \ref{rmk.generic.tower}).
	Thus, we conclude:
	
\begin{prop}
	\label{uns.tower.tr}
Given $A\in \unstablehomotopyX ^{tr}$ we obtain a corresponding unstable slice tower with transfers,
which is functorial in $\unstablehomotopyX ^{tr}$:
	\begin{align*} 
	 \xymatrix@C=1.5pc{&&& \ar[d]|{\rho _{\leq n}^{A}} \ar[dl]|{\rho _{\leq n+1}^{A}} \ar[dlll] A 
		\ar[drr]|{\rho _{\leq 2}^{A}} \ar[drrr]|{\rho _{\leq 1}^{A}} &&& \\
		\mathrm{ho}\! \varprojlim t _{n}^{tr}(A) \ar[r] & \cdots \ar[r]_-{\mu _{n+1}^{A}} &  t _{n+1}^{tr}(A) 
		\ar[r]_-{\mu _{n}^{A}} & t _{n}^{tr}(A) \ar[r]_-{\mu _{n-1}^{A}} & \cdots \ar[r]_-{\mu _{2}^{A}} & t _{2}^{tr}(A) 
		\ar[r]_-{\mu _{1}^{A}} & t _{1}^{tr}(A)	\ar[r] & \ast}
	\end{align*}
where $t _{n}^{tr}(A)$ is fibrant in $\nminonewbiratunsmotX ^{tr}$, $\rho _{\leq n}^{A}$ is a weak equivalence 
in $\nminonewbiratunsmotX ^{tr}$
and $\mu _{n}^{A}$ (resp. $t_{1}^{tr}(A)\rightarrow \ast$) is a fibration in $\nwbiratunsmotX ^{tr}$ 
(resp. $\zerowbiratunsmotX ^{tr}$).
\end{prop}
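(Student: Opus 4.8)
The plan is to repeat, essentially verbatim, the development of Sections~\ref{Introd2} and~\ref{sect-1} with $\unsmot$ replaced by $\unsmottran$, as indicated in the paragraph preceding the statement. First I would verify the two structural hypotheses needed for everything to go through: that $\unsmottran$ is cellular and left proper. Cellularity follows because $\unsmottran$ carries the Quillen model structure transferred along the adjunction~\eqref{tr.bas.adj} from the cellular category $\unsmot$ (Theorem~\ref{Tspectra-cellular}), and left properness can be checked by the same argument, using that $\mathbb Z_{tr}$ is left Quillen and preserves cofibrations; alternatively this is recorded in \cite{MR2435654}. This is exactly the data demanded in Remark~\ref{rmk.generic.tower}(1).

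Next I would observe that the inclusions $\mathbb Z_{tr}(WB_{n+1})\subseteq \mathbb Z_{tr}(WB_{n})$ and $\mathbb Z_{tr}(B_{n+1})\subseteq \mathbb Z_{tr}(B_{n})$ hold, since $WB_{n+1}\subseteq WB_{n}$ and $B_{n+1}\subseteq B_{n}$ as sets of open immersions (the codimension hypothesis is more restrictive for larger index). Hence by the universal property of left Bousfield localizations \cite[3.3.19.(1) and 3.1.1.(1)]{MR1944041}, every $\mathbb Z_{tr}(WB_{n+1})$-local equivalence is a $\mathbb Z_{tr}(WB_{n})$-local equivalence, which is precisely condition~(2) of Remark~\ref{rmk.generic.tower}. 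Therefore Remark~\ref{rmk.generic.tower} applies with $\mathcal N=\unsmottran$ and $L_{n}=\mathbb Z_{tr}(WB_{n})$, so all the results of Sections~\ref{Introd2} and~\ref{sect-1} — in particular the analogue of Theorem~\ref{tower.Quillenfunctors}\eqref{tower.Quillenfunctors.b} giving the tower of left Quillen functors $\nplusonewbiratunsmotX^{tr}\to\nwbiratunsmotX^{tr}$, and the analogue of Theorem~\ref{thm.slice.tower.derived} — hold in $\unsmottran$. One technical point worth checking along the way: the injective model structure makes the identity a cofibrant replacement functor, and I should confirm that the transferred structure on $\unsmottran$ is also injective (or at least that every object is cofibrant), which is the case by \cite{MR2435654} since cofibrations and weak equivalences are detected by $\mathcal U$.

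Running the construction of Definition~\ref{def.slicetower} in $\unsmottran$ then produces functors $\tau_{n}^{tr}$, natural transformations $\rho_{\leq n}^{A}\colon A\to\tau_{n}^{tr}A$ and $\mu_{n}^{A}\colon\tau_{n+1}^{tr}A\to\tau_{n}^{tr}A$, with $\tau_{\leq 1}^{tr}$ the fibrant replacement in $\zerowbiratunsmotX^{tr}$ and each subsequent $\rho_{\leq n+1}^{A}$ a trivial cofibration in $\nwbiratunsmotX^{tr}$ followed by the fibration $\mu_{n}^{A}$. The analogue of Proposition~\ref{prop.properties.slice.tower} gives that $\rho_{\leq n+1}^{A}$ is a trivial cofibration and $\mu_{n}^{A}$ a fibration in $\nwbiratunsmotX^{tr}$, and that the composite down to $\ast$ is a fibration in $\unsmottran$ and $\nwbiratunsmotX^{tr}$; the analogue of Corollary~\ref{cor.slice.tower.derived.a} gives that $\rho_{\leq n}^{A}$ (and hence, by two-out-of-three, $\mu_{n}^{A}$) is a weak equivalence in $\nminonewbiratunsmotX^{tr}$; Corollary~\ref{cor.slices.preserve.wequivs} shows $\tau_{n}^{tr}$ preserves weak equivalences of $\unsmottran$, so it descends to $t_{n}^{tr}\colon\unstablehomotopyX^{tr}\to\unstablehomotopyX^{tr}$ by the universal property of the homotopy functor; and the analogue of Theorem~\ref{thm.slice.tower.derived}\eqref{thm.slice.tower.derived.b} assembles these into the claimed functorial tower in $\unstablehomotopyX^{tr}$, with $t_{n}^{tr}(A)$ fibrant in $\nminonewbiratunsmotX^{tr}$. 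I do not anticipate a genuine obstacle: every step is a formal transcription of an earlier argument, and the only thing requiring attention is the one-time verification that $\unsmottran$ with the transferred structure is cellular, left proper, and has all objects cofibrant — after that, Remark~\ref{rmk.generic.tower} does the rest.
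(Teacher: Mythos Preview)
Your proposal is correct and follows exactly the approach the paper takes: the paper's ``proof'' is the paragraph immediately preceding the proposition, which says to mimic the arguments of Sections~\ref{Introd2} and~\ref{sect-1} and invokes Remark~\ref{rmk.generic.tower}; you have simply unpacked that invocation in more detail. One minor remark: the cofibrancy verification you flag is not actually needed for the statement of this proposition---Remark~\ref{rmk.generic.tower} only asks for cellularity, left properness, and the nesting of local equivalences, and the properties asserted here (fibrancy of $t_{n}^{tr}(A)$, the weak-equivalence and fibration claims) follow from the analogues of Proposition~\ref{prop.properties.slice.tower} and Corollary~\ref{cor.slices.preserve.wequivs}, none of which use that every object is cofibrant.
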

	
	 By construction, for every $n\geq 0$ the Quillen adjunction:
			\[	(\mathbb Z _{tr},\mathcal U, \varphi ):\unsmot \rightarrow \unsmottran
			\]
	induces a pair of Quillen adjunctions between the corresponding birational motivic homotopy 
	categories:
		\begin{align}	
				\label{tr.adj}	
			\begin{aligned}
				(\mathbb Z _{tr},\mathcal U, \varphi ): & \nbiratunsmotX \rightarrow 
				\nbiratunsmotX ^{tr}\\
				(\mathbb Z _{tr},\mathcal U, \varphi ): & \nwbiratunsmotX \rightarrow \nwbiratunsmotX ^{tr} 
			\end{aligned}
		\end{align}
	and passing to the associated homotopy categories, we obtain the corresponding derived adjunctions:
		\begin{align}	
				\label{tr.adj.der}
			\begin{aligned}
				(\mathbb Z _{tr},b_{tr}^{(n)}, \varphi ): & \homotnbiratunsmotX \rightarrow 
				\homotnbiratunsmotX ^{tr}\\
				(\mathbb Z _{tr},wb_{tr}^{(n)}, \varphi ): & \homotnwbiratunsmotX \rightarrow 
				\homotnwbiratunsmotX ^{tr} 
			\end{aligned}
		\end{align}
	where $b_{tr}^{(n)}$ (resp. $wb_{tr}^{(n)}$) is a fibrant replacement functor in $\nbiratunsmotX ^{tr}$
	(resp. $\nwbiratunsmotX ^{tr}$).
	
\begin{rmk}
		\label{rmk.KS}
	 By the work of R\"ondigs and {\O}stv{\ae}r \cite{MR2435654}, 
	$\homotzerobiratunsmotX ^{tr}$ is the natural unstable analogue of the Kahn-Sujatha 
	birational category of motives \cite{K-theory/0596}.
\end{rmk}
	
	Since the maps in $B_{n}$ and $WB_{n}$ (see Definitions 
	\ref{def.localizing-maps} and \ref{def.localizing-maps2})
	are cofibrations in $\unsmot$ and the model structure in $\unsmottran$ is induced by the adjunction \eqref{tr.bas.adj}
	(see Definition \ref{def.ind.Qmod}),
	we observe that the Quillen model structures $\nbiratunsmotX ^{tr}$, $\nwbiratunsmotX
	^{tr}$ are induced by the adjunctions \eqref{tr.adj} (see \cite[Prop. 3.1.12 and Thm. 11.3.2]{MR1944041} and
	Definition \ref{def.ind.Qmod}).  Thus, we conclude:
	
\begin{prop}
		\label{prop.loc.resp.trans}
	Let $f$ be a map in $\nbiratunsmotX ^{tr}$ (resp. $\nwbiratunsmotX ^{tr}$).  Then:
	\begin{enumerate}
	 	\item $f$ is a weak equivalence in $\nbiratunsmotX ^{tr}$ (resp. $\nwbiratunsmotX ^{tr}$) if and only if $\mathcal U f$ is a 
				weak equivalence in $\nbiratunsmotX$ (resp. $\nwbiratunsmotX$).
		\item	$f$ is a fibration in $\nbiratunsmotX ^{tr}$ (resp. $\nwbiratunsmotX ^{tr}$) if and only if $\mathcal U f$ is
				 a fibration in $\nbiratunsmotX$ (resp. $\nwbiratunsmotX$).
	\end{enumerate}
\end{prop}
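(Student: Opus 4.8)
The plan is to exploit the fact, recalled just before the statement, that the Quillen model structures $\nbiratunsmotX^{tr}$ and $\nwbiratunsmotX^{tr}$ are \emph{induced} from $\nbiratunsmotX$ and $\nwbiratunsmotX$ via the adjunctions \eqref{tr.adj}. By definition of an induced model structure (Definition \ref{def.ind.Qmod}), the weak equivalences and fibrations in the target category are \emph{precisely} those maps $f$ such that $\mathcal U f$ is a weak equivalence, respectively a fibration, in the source. So in principle there is nothing to prove; the only subtlety is that we must check the induced model structure genuinely \emph{exists} and has this characterization, i.e. that the conditions of \cite[Prop.~3.1.12 and Thm.~11.3.2]{MR1944041} are met for the localized categories.

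First I would recall that the right adjoint $\mathcal U:\unsmottran\rightarrow\unsmot$ sends $Y^{tr}$ to $Y_+$, and that by the cited result of R\"ondigs--{\O}stv{\ae}r the base adjunction \eqref{tr.bas.adj} already induces the model structure on $\unsmottran$ with the stated characterization of weak equivalences and fibrations. Next I would observe that $\nbiratunsmotX^{tr}$ (resp.\ $\nwbiratunsmotX^{tr}$) is by construction the left Bousfield localization of $\unsmottran$ at $\mathbb Z_{tr}(B_n)$ (resp.\ $\mathbb Z_{tr}(WB_n)$), which exists by Theorem \ref{Hirsch-Bousloc} since $\unsmottran$ is cellular and proper. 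The key point is then that localizing at $\mathbb Z_{tr}(B_n)$ on the transfers side matches, under $\mathcal U$, localizing at $B_n$ on the $\unsmot$ side: the maps $B_n$ (and $WB_n$) are cofibrations between cofibrant objects in $\unsmot$, so $\mathbb Z_{tr}$ applied to them, composed with the localization, realizes exactly the left Bousfield localization of the induced structure. This is precisely what \cite[Prop.~3.1.12 and Thm.~11.3.2]{MR1944041} formalize, and it gives that $\nbiratunsmotX^{tr}$, $\nwbiratunsmotX^{tr}$ are themselves induced from $\nbiratunsmotX$, $\nwbiratunsmotX$ along \eqref{tr.adj}.

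Having established that, both assertions \textbf{(1)} and \textbf{(2)} follow immediately from the very definition of an induced model structure: a map $f$ in $\nbiratunsmotX^{tr}$ (resp.\ $\nwbiratunsmotX^{tr}$) is a weak equivalence exactly when $\mathcal U f$ is one in $\nbiratunsmotX$ (resp.\ $\nwbiratunsmotX$), and similarly for fibrations. I would close by noting that no perfect-field hypothesis is needed here, since we are not invoking Proposition \ref{prop.genericsmoothness-Quillenequiv}, only the formal properties of Bousfield localization and induced model structures.

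The main obstacle I anticipate is the bookkeeping needed to verify that the left Bousfield localization of an induced model structure at the image of a set of maps is again induced from the left Bousfield localization downstairs — in other words, checking carefully that the hypotheses of \cite[Thm.~11.3.2]{MR1944041} (that the generating cofibrations and generating trivial cofibrations of the localized source category have images along $\mathbb Z_{tr}$ whose transfinite composites behave well, and that $\unsmottran$ with its localized structure is cellular) are in force. Since $B_n$ and $WB_n$ consist of cofibrations between cofibrant objects and $\mathbb Z_{tr}$ is left Quillen, this compatibility holds, but it is the one place where a genuine (if routine) argument is required rather than a pure appeal to definitions.
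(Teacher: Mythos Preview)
Your proposal is correct and follows essentially the same approach as the paper: the paper's argument, contained entirely in the paragraph preceding the proposition, observes that because the maps in $B_n$ and $WB_n$ are cofibrations in $\unsmot$ and the base model structure on $\unsmottran$ is induced via \eqref{tr.bas.adj}, Hirschhorn's results \cite[Prop.~3.1.12 and Thm.~11.3.2]{MR1944041} imply that the localized structures $\nbiratunsmotX^{tr}$, $\nwbiratunsmotX^{tr}$ are themselves induced by the adjunctions \eqref{tr.adj}, whence the statement follows from Definition~\ref{def.ind.Qmod}. Your write-up is more explicit about the verification required, but the route and the key citations are identical.
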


	Now we can state the main result of this section.
	
\begin{thm}
		\label{thm.comp.tran}
	Let $A$ in $\unstablehomotopyX ^{tr}$ be an arbitrary simplicial presheaf with transfers.  Then:
		\begin{enumerate}
	\item \label{thm.comp.tran.a}  The forgetful functor $\mathcal U:\unsmottran \rightarrow \unsmot$ induces an 
		isomorphism in $\unstablehomotopyX$ between the unstable slice tower with transfers for $A$ 
		(see Proposition \ref{uns.tower.tr}) and 
	the corresponding unstable slice tower for $\mathcal U (A)$ (see Theorem 
	\ref{thm.slice.tower.derived}\eqref{thm.slice.tower.derived.b}).
	\item \label{thm.comp.tran.b} For $n\geq 0$, the unstable slices $s_{n}(\mathcal U (A))$ of $\mathcal U (A)$
	admit a canonical structure of simplicial presheaves with transfers.
		\end{enumerate}
\end{thm}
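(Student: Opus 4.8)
The plan is to exploit the fact, recorded just before the theorem, that the model structures $\nwbiratunsmotX^{tr}$ are \emph{induced} from $\nwbiratunsmotX$ via the adjunction $(\mathbb Z_{tr},\mathcal U,\varphi)$, so that $\mathcal U$ detects and preserves the relevant weak equivalences and fibrations (Proposition \ref{prop.loc.resp.trans}). Part (\ref{thm.comp.tran.a}) should then follow by comparing the two towers step by step. The unstable slice tower with transfers for $A$ is built, exactly as in Definition \ref{def.slicetower}, by iterated functorial factorizations of $\rho_{\leq n}^{A}$ into a trivial cofibration followed by a fibration in $\nwbiratunsmotX^{tr}$; the tower for $\mathcal U(A)$ is built by the same recipe in $\nwbiratunsmotX$. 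First I would check that one may choose the functorial factorizations in $\unsmottran$ and in $\unsmot$ compatibly with $\mathcal U$ — this is where the word ``induced'' does the work: a (trivial) cofibration followed by a fibration in $\nwbiratunsmotX^{tr}$ is sent by $\mathcal U$ to a map with the analogous factorization properties in $\nwbiratunsmotX$, by Proposition \ref{prop.loc.resp.trans}, while $\mathcal U$ is a right adjoint and hence commutes with the pullback defining $\tilde s_{n}$ in Definition \ref{def.constr.slice}. Since $\mathcal U$ also preserves limits (so commutes with $\mathrm{ho}\varprojlim$) and sends weak equivalences in $\nwbiratunsmotX^{tr}$ to weak equivalences in $\nwbiratunsmotX$, hence to isomorphisms after passing to $\unstablehomotopyX$, induction on $n$ gives a natural isomorphism of towers $\mathcal U(t_{n}^{tr}A)\cong t_{n}(\mathcal U A)$ in $\unstablehomotopyX$ compatible with the structure maps $\mu_{n}^{A}$, $\nu_{n}^{\mathcal U A}$ and the maps from $\mathcal U A$. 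The base case $n=1$ is the statement that $\mathcal U$ applied to a fibrant replacement in $\zerowbiratunsmotX^{tr}$ is a fibrant replacement in $\zerowbiratunsmotX$, again immediate from Proposition \ref{prop.loc.resp.trans}.

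For part (\ref{thm.comp.tran.b}), the point is that by (\ref{thm.comp.tran.a}) the unstable slice $s_{n}(\mathcal U A)$ is, up to canonical isomorphism in $\unstablehomotopyX$, the image under $\mathcal U$ of the $n$-th slice computed with transfers; more precisely, applying $\mathcal U$ to the fibre sequence $\tilde s_{n}^{tr}A \to t_{n+1}^{tr}A \to t_{n}^{tr}A$ in $\unsmottran$ — which $\mathcal U$, being a right adjoint, carries to the defining cartesian square for $\tilde s_{n}(\mathcal U A)$ in $\unsmot$ — exhibits $s_{n}(\mathcal U A)$ as $\mathcal U$ of an object of $\unstablehomotopyX^{tr}$, and an object in the image of $\mathcal U$ is precisely a simplicial presheaf together with a transfer structure. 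So the transfer structure on $s_{n}(\mathcal U A)$ is inherited from that on $\tilde s_{n}^{tr}A$, and it is canonical because all the constructions involved (factorizations, pullbacks, fibrant replacements) are functorial.

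The main obstacle I anticipate is not conceptual but bookkeeping: making precise the phrase ``$\mathcal U$ induces an isomorphism of towers'' requires that the \emph{choices} of functorial factorization in the two categories be genuinely compatible, i.e.\ that $\mathcal U$ takes the chosen factorization of $\rho_{\leq n}^{A}$ in $\nwbiratunsmotX^{tr}$ to one that is admissible as the chosen factorization of $\tau_{\leq n}^{\mathcal U A}$ in $\nwbiratunsmotX$ — or, if the factorizations are fixed independently, that the resulting comparison map between the two sets of choices is a weak equivalence, which then forces an isomorphism after passing to $\unstablehomotopyX$ by the two-out-of-three property and induction. One clean way around this is to argue entirely in the homotopy categories: since $\mathcal U$ is right Quillen for each localization and $t_{n}^{tr}$, $t_{n}$ are the corresponding derived-tower functors, the derived adjunctions \eqref{tr.adj.der} show that $\mathbf{R}\mathcal U$ commutes with the (homotopy) limits and fibre sequences that define the towers, so $\mathbf{R}\mathcal U$ applied to the tower of Proposition \ref{uns.tower.tr} is canonically isomorphic to the tower of Theorem \ref{thm.slice.tower.derived}\eqref{thm.slice.tower.derived.b} for $\mathcal U A$; this is the cleanest route and I would write the proof that way, citing Proposition \ref{prop.loc.resp.trans} for the crucial fact that $\mathbf{R}\mathcal U$ detects weak equivalences.
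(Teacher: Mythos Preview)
Your proposal is correct, and your ``cleanest route'' is close to what the paper does, but the paper's argument is more direct than either of your two routes and sidesteps the bookkeeping you worry about.

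The paper does not attempt to match factorizations or invoke a general derived-adjunction argument.  Instead it observes that for each $n$ the map $\mathcal U(\rho_{\leq n}^{A}):\mathcal U(A)\rightarrow \mathcal U(t_{n}^{tr}A)$ is a weak equivalence in $\nminonewbiratunsmotX$ (by Propositions \ref{uns.tower.tr} and \ref{prop.loc.resp.trans}), and then simply applies the already-constructed functor $t_{n}$ to this map: Lemma \ref{lem.slices.preserve.wequivs}\eqref{lem.slices.preserve.wequivs.a} says $t_{n}$ sends $\nminonewbiratunsmotX$-weak equivalences to isomorphisms in $\unstablehomotopyX$, so $t_{n}(\mathcal U(\rho_{\leq n}^{A}))$ is an isomorphism.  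Since $\mathcal U(t_{n}^{tr}A)$ is already fibrant in $\nminonewbiratunsmotX$ (again by Proposition \ref{prop.loc.resp.trans}), one gets $t_{n}(\mathcal U A)\cong \mathcal U(t_{n}^{tr}A)$ directly, with no induction and no need to compare chosen factorizations.  Your inductive factorization-matching would also work, but the paper's use of Lemma \ref{lem.slices.preserve.wequivs} is the shortcut that eliminates exactly the ``main obstacle'' you anticipated.

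For part \eqref{thm.comp.tran.b} your argument is essentially the paper's: compare the two fibre sequences using \eqref{thm.comp.tran.a} and conclude $s_{n}(\mathcal U A)\cong \mathcal U(s_{n}^{tr}A)$ via \cite[I.3 Prop.~5(iii)]{MR0223432}.
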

\begin{proof}
\eqref{thm.comp.tran.a}:  By the functoriality of the unstable slice tower (see Theorem \ref{thm.slice.tower.derived})
and Lemma \ref{lem.slices.preserve.wequivs}, it suffices to show that for $n\geq 0$ the maps 
	\[ \rho_{\leq n}^{A}:A\rightarrow t_{n}^{tr}(A)
	\]
are weak equivalences in $\nminonewbiratunsmotX$, i.e. after forgetting transfers.  But this follows
from Propositions \ref{uns.tower.tr} and \ref{prop.loc.resp.trans}.
	
	\eqref{thm.comp.tran.b}:  The case of the zero slice $s_{0}=t_{1}$ follows from \eqref{thm.comp.tran.a} above.  If
	$n\geq 1$, let $s_{n}^{tr}(A)$ denote the unstable slice with transfers of $A$.  
	Thus, by Proposition \ref{slices.fit.fibreseqs}
	the following is a fibre sequence in $\unstablehomotopyX ^{tr}$:
		\[	\xymatrix{s_{n}^{tr}(A)  \ar[r] & t_{n+1}^{tr}(A) \ar[r]^-{\mu _{n}^{A}} & t_{n}^{tr}(A)}
		\]
	and by \eqref{thm.comp.tran.a} above there is a commutative diagram where the rows are fibre sequences in 
	$\unstablehomotopyX$ such that $t_{n}(\mathcal U (\rho _{\leq n}^{A}))$, 
	$t_{n+1}(\mathcal U (\rho _{\leq n+1}^{A}))$ are isomorphisms in $\unstablehomotopyX$.
	\[	\xymatrix{s_{n}(\mathcal U (A))  \ar[r] \ar[d]_-{\alpha}& 
			t_{n+1}(\mathcal U (A)) \ar[r]^-{\nu _{n}^{\mathcal U (A)}} \ar[d]_-{t_{n+1}(\mathcal U 
			(\rho _{\leq n+1}^{A}))}& t_{n}(\mathcal U (A)) \ar[d]^-{t_{n}(\mathcal U (\rho _{\leq n}^{A}))}\\
			\mathcal U (s_{n}^{tr}(A))  \ar[r] & \mathcal U (t_{n+1}^{tr}(A)) \ar[r]_-{\mathcal U (\mu _{n}^{A})} & 
			\mathcal U (t_{n}^{tr}(A))}
	\]
	Hence, by \cite[I.3 Prop. 5(iii)]{MR0223432} we conclude that $\alpha$ is also
	 an isomorphism in $\unstablehomotopyX$.
\end{proof}

\begin{rmk}
		\label{rmk.no.gen.trans}
	The unstable slices $s_{n}A$ of an arbitrary simplicial presheaf $A\in \unstablehomotopyX$ do not 
	admit transfers in general.  The example of Levine \cite[\S 2]{MR2804260} works as well in the unstable 
	setting and shows that if our base
	scheme is a field $k$ and $C$ a smooth projective curve of genus $g>0$ with no rational $k$-points
	then $s_{0}(\mathbb Z _{C})$ does not admit transfers, 
	where $\mathbb Z _{C}$ is the presheaf of abelian groups represented by $C$.
\end{rmk}

\begin{defi}
	\label{def.ind.Qmod}
Let $(F,G,\varphi):\mathcal A \rightarrow \mathcal B$ be a Quillen adjunction between two Quillen model categories
$\mathcal A$, $\mathcal B$.  We say that the Quillen model structure on $\mathcal B$ is induced by the adjunction
$(F,G,\varphi)$ if a map $f$ in $\mathcal B$ is a weak equivalence (resp. fibration) if and only if $G(f)$ is
a weak equivalence (resp. fibration) in $\mathcal A$.
\end{defi}
	
\end{section}
\begin{section}{Comparison with the Classical Postnikov Tower}
		\label{sect-5}
	
	In this section the base scheme $X$ will be of the form $\mathrm{Spec}(\mathbb C )$,
	where $\mathbb C$ denotes the complex numbers, and $Sm_{X}$ will consist of only
	smooth quasi-projective varieties.  We will write $\unsmot '$ for the category of pointed simplicial
	presheaves on $Sm_{X}$ equipped with the Quillen model structure defined in \cite[Thm.\,A.17]{MR2597741},
	which is Quillen equivalent \cite[Rmk.\,A.21]{MR2597741} to $\unsmot$.  
	By Remarks \ref{rmk.generic.tower} and \ref{rmk.change.model.str}, all
	the results in section \ref{sect-1} hold in $\unsmot '$
	and the constructions are canonically isomorphic after we pass to the respective homotopy
	categories.  We will write $\unstablehomotopyX '$ for the homotopy category associated to
	$\unsmot '$.
	
	Our goal is to 
	study the behavior of
	the unstable slice filtration (see Theorem
	\ref{thm.slice.tower.derived}\eqref{thm.slice.tower.derived.b}) in the motivic unstable homotopy category 
	$\unstablehomotopyX '$ 
	with respect to the functor of $\mathbb C$-points in $Sm_{X}$ and 
	compare the unstable slice 
	filtration with the classical Postnikov tower of algebraic topology.
	
	We will write $\topspc$ for the category of pointed compactly generated topological spaces \cite{MR0210075}
	equipped with the model structure constructed by Quillen (see \cite[Ch.\:2,\:\S 3,\:Thm.\:1]{MR0223432} or 
	\cite[Thm.\:2.4.23]{MR1650134}), and
	$\htop$ for its associated homotopy category.  
	Notice that $\topspc$ is a simplicial model category \cite[Ch.\:2,\:\S 2]{MR0223432}, 
	we will write $Map_{\mathsf{top}}(-,-)$ for the corresponding simplicial mapping space \cite[1.1.6]{MR1944041}.
	
	
For $n\geq 1$, we will write $D^{n}$ for the unit disc contained in $\mathbb R ^{n}$ and
$d_{n}\in \topspc$ will be the standard inclusion $\partial D^{n+1}_{+}\rightarrow D^{n+1}_{+}$
of the $n$-sphere into the $n+1$-unit disc (both with a disjoint base point).  Consider:
	\[L_{n-1}\mathbf T=\{ d_{n}\}.\] 
 We will write $L_{n-1}\topspc$ for the left Bousfield localization of $\topspc$
with respect to $L_{n-1}\mathbf{T}$, which exists by \cite[12.1.4]{MR1944041} and Theorem \ref{Hirsch-Bousloc}.	

Then, mimicking the same
 arguments as for the unstable slice filtration (see Remark \ref{rmk.generic.tower}), we deduce:
	
\begin{prop}
		\label{prop.Postnikov.tower}
	Let $n\geq 1$ be and arbitrary integer.  Then, there exist:
		\begin{enumerate}
	\item \label{prop.Postnikov.tower.a} Functors $P_{n}:\htop \rightarrow \htop$.
	\item \label{prop.Postnikov.tower.b} Natural transformations:
		\[ p_{\leq n}:id\rightarrow P_{n}, \: \:
			p_{n}:P_{n+1}\rightarrow P_{n}
		\]
	such that for every topological space $K\in \topspc$
	the following tower is functorial in $\htop$:
		\[	
		\xymatrix@C=1.5pc{&&& \ar[d]|{p_{\leq n}^{K}} 
		\ar[dl]|{p_{\leq n+1}^{K}} \ar[dlll] K 
		\ar[drr]|{p_{\leq 2}^{K}} \ar[drrr]|{p_{\leq 1}^{K}} &&& \\
	\mathrm{ho}\! \varprojlim P _{n}(K) \ar[r] &	\cdots \ar[r]_-{p _{n+1}^{K}} &  P _{n+1}(K) \ar[r]_-{p_{n}^{K}} &
 	P _{n}(K) \ar[r]_-{p_{n-1}^{K}} & \cdots \ar[r]_-{p_{2}^{K}} & P_{2}(K) \ar[r]_-{p _{1}^{K}}& P_{1}(K)	\ar[r] & \ast}
		\]
	where $P _{n}(K)$ is fibrant in $L_{n-1}\topspc$, $p_{\leq n}^{K}$ is a weak equivalence in $L_{n-1}\topspc$
	and $p_{n}^{K}$ (resp. $P_{1}(K)\rightarrow \ast$) is a fibration in $L_{n}\topspc$ (resp. $L_{0}\topspc$).
		\end{enumerate}
\end{prop}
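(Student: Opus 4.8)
The plan is to apply the general machinery developed for the unstable slice filtration (Remark \ref{rmk.generic.tower}) to the model category $\topspc$ and the family of localizing sets $L_{n-1}\mathbf T=\{d_n\}$. According to Remark \ref{rmk.generic.tower}, everything in section \ref{sect-1} — and in particular the conclusion of Theorem \ref{thm.slice.tower.derived} — goes through provided we verify two hypotheses: first, that $\topspc$ is cellular and left proper; and second, that for each $n\geq 0$ every weak equivalence in $L_{n}\topspc$ is a weak equivalence in $L_{n-1}\topspc$, equivalently that $id:L_{n}\topspc\rightarrow L_{n-1}\topspc$ is a left Quillen functor. Once these are in place, the functors $P_n$, the natural transformations $p_{\leq n}$ and $p_n$, and the functorial tower with the stated fibrancy and weak-equivalence properties are produced by literally the same inductive construction as in Definition \ref{def.slicetower} and Proposition \ref{prop.properties.slice.tower}, with $\nwbiratunsmotX$ replaced throughout by $L_{n-1}\topspc$.

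First I would record that $\topspc$, the category of pointed compactly generated topological spaces with the Quillen model structure, is cellular (it is cofibrantly generated with generating cofibrations the sphere inclusions, and the relevant smallness and effective-monomorphism conditions of \cite[Def. 12.1.1]{MR1944041} hold — this is standard, cf.\ \cite[12.1.4]{MR1944041}) and left proper (pushouts of weak equivalences along cofibrations are weak equivalences, since every object is fibrant). By Theorem \ref{Hirsch-Bousloc} this already guarantees the existence of the left Bousfield localizations $L_{n-1}\topspc$ with respect to the set $L_{n-1}\mathbf T$, as is in fact noted in the paragraph preceding the statement.

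Next I would check the nesting condition on the localizing sets. Here one observes that a space $W$ which is $L_n\mathbf T$-local is automatically $L_{n-1}\mathbf T$-local: being $\{d_m\}$-local means $Map_{\mathsf{top}}(D^{m+1}_+,W)\rightarrow Map_{\mathsf{top}}(\partial D^{m+1}_+,W)$ is a weak equivalence, i.e.\ $W$ has vanishing homotopy groups above degree $m$; hence the $L_n$-local objects are those with $\pi_i=0$ for $i>n$, and these form a subclass of the $L_{n-1}$-local objects only after one adjusts indices — more carefully, one uses that $d_{n}$ becomes a weak equivalence in $L_{n-1}\topspc$, so by the universal property of left Bousfield localization (\cite[3.3.19.(1), 3.1.1.(1)]{MR1944041}) the identity $L_{n}\topspc\rightarrow L_{n-1}\topspc$ is left Quillen, which is exactly the hypothesis of Remark \ref{rmk.generic.tower}(2) with $\mathcal N=\topspc$ and $L_n=L_{n-1}\mathbf T$. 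One must be slightly careful with the bookkeeping of indices (the localization $L_{n-1}\topspc$ kills $d_n$, so it truncates at level $n$, matching the statement that $P_n(K)$ is fibrant in $L_{n-1}\topspc$), but this is purely a matter of matching conventions.

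With these two inputs verified, the construction of the tower is then formal: one defines $P_1$ as the fibrant replacement in $L_0\topspc$, and inductively factors $p_{\leq n}^K:K\rightarrow P_n(K)$ as a trivial cofibration in $L_{n-1}\topspc$ followed by a fibration in $L_{n-1}\topspc$, exactly as in Definition \ref{def.slicetower}; functoriality of the factorizations gives the functors $P_n$ and the natural transformations $p_{\leq n}$, $p_n$, and the analogues of Proposition \ref{prop.properties.slice.tower} and Corollary \ref{cor.slices.preserve.wequivs} (that $P_n$ preserves weak equivalences) let one descend to $\htop$ via the universal property of the homotopy functor, producing the functorial tower in $\htop$ with the asserted fibrancy statements. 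I do not expect any serious obstacle here; the only genuine content is the cellularity and left-properness of $\topspc$ (both classical) and the correct index matching in the nesting condition, and the delicate point — if any — is simply to make sure that $L_{n-1}\topspc$ really is the Postnikov $n$-truncation, so that in the next results the tower can be identified with the classical Postnikov tower.
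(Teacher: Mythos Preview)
Your proposal is correct and follows exactly the paper's approach: the paper's entire proof is the single clause ``mimicking the same arguments as for the unstable slice filtration (see Remark \ref{rmk.generic.tower}), we deduce'', and you have simply unpacked what that remark requires for $\mathcal N=\topspc$ and $L_n=L_{n-1}\mathbf T$. One small slip in your nesting argument: to get $id:L_n\topspc\to L_{n-1}\topspc$ left Quillen you need $d_{n+1}$ (the localizing map for $L_n\topspc$), not $d_n$, to become a weak equivalence in $L_{n-1}\topspc$ --- but as you yourself flag, this is pure index bookkeeping, and it is immediate once one observes that $L_{n-1}$-local objects have $\pi_i=0$ for $i\geq n$.
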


The work of Hirschhorn \cite{MR1944041} implies that the tower constructed in Proposition \ref{prop.Postnikov.tower}
coincides with the Postnikov tower in $\topspc$:

\begin{thm}[Hirschhorn]
		\label{prop.Postnikov.agree}
	The tower constructed in Proposition \ref{prop.Postnikov.tower} is the Postnikov tower of $K\in \topspc$.
\end{thm}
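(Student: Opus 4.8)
The plan is to identify the left Bousfield localization $L_{n-1}\topspc$ built from the single map $d_n : \partial D^{n+1}_+ \to D^{n+1}_+$ with the classical $(n-1)$-st Postnikov section functor, and then read off that the tower of Proposition~\ref{prop.Postnikov.tower} is exactly the Postnikov tower. The key observation is that $d_n$ is (up to weak equivalence) the standard generating cofibration $S^n_+ \hookrightarrow D^{n+1}_+$, so a pointed space $Z$ is $L_{n-1}\mathbf{T}$-local precisely when the restriction map
\[
	\Hom_{\htop}(D^{n+1}_+, Z) \longrightarrow \Hom_{\htop}(S^n_+, Z)
\]
is a bijection; since $D^{n+1}$ is contractible, this says exactly that $\Hom_{\htop}(S^n_+, Z) = \pi_n(Z)$ (at every basepoint/component) is trivial, and because $d_n$ is a cofibration of cofibrant objects, the local objects are automatically the fibrant objects of $L_{n-1}\topspc$. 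Hence the fibrant objects of $L_{n-1}\topspc$ are precisely the $\topspc$-fibrant spaces with vanishing homotopy groups in degrees $\geq n$, i.e. the spaces of type $(n-1)$ in the sense of Postnikov theory.

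The main steps, in order, would be: (1) verify that $d_n$ and the cofibration $j_n : S^n_+ \to D^{n+1}_+$ define the same localization (they are related by a weak equivalence $\partial D^{n+1}_+ \simeq S^n_+$, and one invokes the invariance of left Bousfield localization under weak equivalences of the localizing set, e.g.\ \cite[Thm.\,3.3.19]{MR1944041}); (2) compute the $d_n$-local objects as above, concluding that $L_{n-1}\htop$ is the full subcategory of $\htop$ on spaces with $\pi_i = 0$ for $i \geq n$; (3) invoke Hirschhorn's own identification --- this is essentially \cite[\S 1.5 and Ch.\,5]{MR1944041}, or the classical colocalization/localization description of Postnikov sections --- that the fibrant replacement functor $P_n := \mathrm{Ho}$ of $L_{n-1}\topspc$ together with the natural map $p_{\leq n}^K : K \to P_n(K)$ is the $(n-1)$-st Postnikov section $K \to K[n-1]$, i.e.\ $p_{\leq n}^K$ is an iso on $\pi_i$ for $i < n$ and $P_n(K)$ has vanishing higher homotopy; (4) observe that the maps $p_n^K : P_{n+1}(K) \to P_n(K)$, being the derived maps of the identity left Quillen functors $L_n\topspc \to L_{n-1}\topspc$ compatible with the $p_{\leq \bullet}^K$, are forced by the universal property to coincide with the canonical Postnikov tower maps $K[n] \to K[n-1]$; and (5) identify $\mathrm{ho}\varprojlim P_n(K)$ with the classical limit, which recovers $K$ (on the relevant class of spaces) by Whitehead's theorem, completing the claim that the tower is the Postnikov tower.

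The step that requires the most care --- and the one this statement really rests on --- is (3): one must check that the model-categorical fibrant replacement in the single-map Bousfield localization $L_{n-1}\topspc$ genuinely realizes the homotopy-theoretic Postnikov truncation, rather than merely some functor with the correct local objects. This is where Hirschhorn's analysis is invoked; the point is that a localization at a \emph{cofibration} between cofibrant objects admits an explicit small-object-argument fibrant replacement built by attaching cells along $d_n$, and one identifies this construction cell-by-cell with the standard "kill all $\pi_i$ for $i \geq n$" construction. The remaining steps are then formal: steps (1), (2), (4), (5) are routine applications of the universal property of left Bousfield localization, the computation of mapping spaces out of spheres and discs, and Whitehead's theorem, and require no serious new input beyond what is already quoted from \cite{MR1944041} and \cite{MR0223432}.
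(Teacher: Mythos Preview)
Your overall strategy is sound and ultimately lands on the same citation as the paper (Hirschhorn \S 1.5), but there is one genuine slip and one point where you are working harder than necessary.

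The slip is in your locality computation. An object $Z$ is $d_n$-local when the induced map of \emph{simplicial mapping spaces}
\[
\mathrm{Map}_\ast(D^{n+1}_+,Z)\longrightarrow \mathrm{Map}_\ast(S^n_+,Z)
\]
is a weak equivalence, not merely when it is a bijection on $\pi_0=\Hom_{\htop}(-,Z)$. Your $\pi_0$ condition only yields $\pi_n(Z,z)=\ast$ for every basepoint $z$, whereas the full mapping-space condition (whose homotopy fibre over $z$ is $\Omega^n_z Z$) is exactly $\pi_i(Z,z)=\ast$ for all $i\geq n$. You do state the correct conclusion in your step~(2), but the argument you sketch in the first paragraph does not establish it; you need the enriched (simplicial) version of locality, as in \cite[Def.\,3.1.4, Prop.\,1.5.2]{MR1944041}.

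As for the route, the paper avoids recomputing anything. It simply observes that Hirschhorn already identifies the Bousfield localization of \emph{unpointed} spaces at $\partial D^{n+1}\to D^{n+1}$ with the Postnikov section functor \cite[1.5.1--1.5.4]{MR1944041}, and then uses that the disjoint-basepoint/forget adjunction is simplicially enriched to conclude that a pointed space is $(\partial D^{n+1}_+\to D^{n+1}_+)$-local if and only if it is $(\partial D^{n+1}\to D^{n+1})$-local as an unpointed space (via \cite[Prop.\,3.1.12, Thm.\,11.3.2]{MR1944041}). This one line replaces your steps (1)--(3) entirely. Your steps (4) and (5) are then unnecessary: once the fibrant replacements in each $L_{n-1}\topspc$ are identified with the Postnikov sections, the tower maps are forced by the factorization defining the tower, and convergence of the tower is not part of the statement being proved.
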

\begin{proof}
	Hirschhorn constructs the Postnikov tower in the context of unpointed topological spaces
	using Bousfield localizations with respect to $\partial D^{n+1}\rightarrow D^{n+1}$
	(see 1.5.1, 1.5.2, 1.5.3 and 1.5.4 in \cite{MR1944041}).  Since the maps $\partial D^{n+1}\rightarrow D^{n+1}$
	are cofibrations and the adjunction between unpointed and pointed topological spaces is enriched
	in simplicial sets, we deduce that a pointed space is local with respect to 
	$\partial D^{n+1}_{+}\rightarrow D^{n+1}_{+}$ if and only if it is local (as an unpointed space) with
	respect to $\partial D^{n+1}\rightarrow D^{n+1}$ (see \cite[Prop.\,3.1.12 and Thm.\,11.3.2]{MR1944041}).
	This finishes the proof.
\end{proof}

	If $Y\in Sm_{X}$, we will write $Y(\mathbb C)$ for the set of complex points of $Y$, which
	is a complex analytic manifold.
	Hence, in particular $Y(\mathbb C )_{+}\in \topspc$.  
	
\begin{prop}(\cite{MR2597741})
	 \label{prop.cpxreal.lQuillen}
	The functor $\mathbb C _{R}$ of complex points is the left adjoint in a Quillen adjunction:
		\begin{align*}
				(\mathbb C _{R}, Sing,\varphi):\unsmot ' \rightarrow \topspc
		\end{align*}
	where $Sing(K)$ is the simplicial presheaf
		\[	Y\in Sm_{X}\mapsto Map_{\mathsf{top}}(Y(\mathbb C)_{+}, K).
		\]
\end{prop}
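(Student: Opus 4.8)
The plan is to show that the pair $(\mathbb{C}_R, Sing)$ forms a Quillen adjunction by first establishing that it is an adjunction at the level of underlying categories, and then checking that $\mathbb{C}_R$ sends generating cofibrations and generating trivial cofibrations of $\unsmot'$ to cofibrations and trivial cofibrations of $\topspc$, respectively. Since this statement is attributed to \cite{MR2597741}, most of the work will be to recall and assemble the relevant constructions rather than to develop anything new.

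First I would construct the adjunction. The complex-points functor $Y\mapsto Y(\mathbb{C})$ from smooth quasi-projective $\mathbb{C}$-varieties to topological spaces extends, via left Kan extension along the Yoneda embedding, to a colimit-preserving functor $\mathbb{C}_R$ on pointed simplicial presheaves; its right adjoint is then forced to be $K\mapsto (Y\mapsto Map_{\mathsf{top}}(Y(\mathbb{C})_+,K))$, which is exactly the functor $Sing$ in the statement. This uses that $\unsmot'$ is a category of (pointed) simplicial presheaves, so it is freely generated under colimits by the representables together with the simplicial direction, and that $\topspc$ is complete and cocomplete; the adjunction isomorphism $\Hom_{\topspc}(\mathbb{C}_R A, K)\cong \Hom_{\unsmot'}(A, Sing(K))$ follows by reducing to representables smashed with simplicial sets. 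One should also note that $\mathbb{C}_R$ commutes with the simplicial smash product, i.e. $\mathbb{C}_R(K\wedge A)\cong |K|\wedge \mathbb{C}_R(A)$ for a pointed simplicial set $K$, so that the adjunction is compatible with the simplicial enrichments.

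Next I would verify the Quillen condition. In the model structure of \cite[Thm.\,A.17]{MR2597741} the cofibrations are (essentially) the monomorphisms of simplicial presheaves, generated by $\partial\Delta^n_+\wedge Y_+ \to \Delta^n_+\wedge Y_+$, and $\mathbb{C}_R$ carries this map to $\partial\Delta^n_+\wedge Y(\mathbb{C})_+ \to \Delta^n_+\wedge Y(\mathbb{C})_+$, which is a (relative CW, hence a) cofibration in $\topspc$; so $\mathbb{C}_R$ preserves cofibrations. For trivial cofibrations it suffices, by the small object argument, to handle a set of generators, and here one invokes the analytification/realization results of \cite{MR2597741}: the Nisnevich-local weak equivalences and the $\mathbb{A}^1$-homotopies are both sent to weak equivalences in $\topspc$ — the former because $Y\mapsto Y(\mathbb{C})$ takes elementary Nisnevich squares to homotopy pushout squares of topological spaces, the latter because $\mathbb{A}^1(\mathbb{C})=\mathbb{C}$ is contractible. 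Combining this with preservation of cofibrations and Ken Brown's lemma gives that $\mathbb{C}_R$ takes trivial cofibrations to trivial cofibrations.

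The main obstacle is the treatment of the generating trivial cofibrations: one must know precisely which maps generate the trivial cofibrations in the model structure of \cite[Thm.\,A.17]{MR2597741} and check that $\mathbb{C}_R$ kills the Nisnevich-descent maps and the $\mathbb{A}^1$-contraction. This is genuinely the content of \cite{MR2597741} and the reason that reference restricts to smooth quasi-projective varieties over $\mathbb{C}$ (so that $Y(\mathbb{C})$ is a well-behaved space and analytification is exact on the relevant squares); I would cite that work for this step rather than reprove it, and simply note that the enriched compatibility $\mathbb{C}_R(K\wedge A)\cong |K|\wedge\mathbb{C}_R(A)$ shows the adjunction is a simplicial Quillen adjunction, which is what is needed for the comparison of towers in the sequel.
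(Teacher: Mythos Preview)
Your proposal is correct and in fact considerably more detailed than the paper's own proof, which consists entirely of the single line ``This follows from \cite[Thm A.23]{MR2597741}.'' You have essentially sketched the content behind that citation: the left Kan extension construction of $\mathbb{C}_R$, the identification of its right adjoint as $Sing$, and the verification of the Quillen condition via the generating (trivial) cofibrations, with the key inputs being that elementary Nisnevich squares realize to homotopy pushouts and that $\mathbb{A}^1(\mathbb{C})$ is contractible. Since you already note that the substantive step---handling the generating trivial cofibrations for the model structure of \cite[Thm.\,A.17]{MR2597741}---is the content of that reference and would be cited rather than reproved, your approach and the paper's are the same in spirit; you have simply unpacked what the paper leaves as a black-box citation.
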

\begin{proof}
This follows from \cite[Thm A.23]{MR2597741}.
\end{proof}

Now we are ready to establish a connection between our unstable slice filtration and the classical Postnikov tower in algebraic topology.  Let $B_{n}(\mathbb C)$ be the following set of maps in $\topspc$:
	\begin{align}
		\label{diag.cplx.openimm}
			B_{n}(\mathbb C)=
			\{ \mathbb C _{R}(\iota _{U,Y}): U(\mathbb C)_{+}\rightarrow Y(\mathbb C)_{+} \mid
			\iota _{U,Y}\in B_{n} \}
	\end{align}
where we are considering $B_{n}$ as a set of maps in $\unsmot '$ (see Definition \ref{def.localizing-maps}).  
We define $B_{n}\topspc$ to be
the left Bousfield localization of $\topspc$ with respect to the set $B_{n}(\mathbb C)$, which exists by
\cite[12.1.4]{MR1944041} and Theorem \ref{Hirsch-Bousloc}.
We will write $B_{n}\htop$ for its associated homotopy category. 

Then, mimicking the same
 arguments as for the unstable slice filtration (see Remark \ref{rmk.generic.tower}), we deduce:
		
\begin{prop}	
	\label{prop.complexPtower}
Let $n\geq 1$ be and arbitrary integer.  Then, there exist:
\begin{enumerate}
	\item \label{prop.complexPtower.a} Functors $ t_{n}^{\mathbb C}:\htop \rightarrow \htop$.
	\item \label{prop.complexPtower.b} Natural transformations:
				\[ \mathrm{r}_{\leq n}:id\rightarrow t_{n}^{\mathbb C}, \: \: \mathrm{r}_{n}:t_{n+1}^{\mathbb C}\rightarrow t_{n}^{\mathbb C}
		\]
	such that for every topological space $K\in \topspc$
		the following tower is functorial in $\htop$:
		\[	\xymatrix@C=1.5pc{&&& \ar[d]|{\mathrm{r}_{\leq n}^{K}} 
		\ar[dl]|{\mathrm{r}_{\leq n+1}^{K}} \ar[dlll] K 
		\ar[drr]|{\mathrm{r}_{\leq 2}^{K}} \ar[drrr]|{\mathrm{r}_{\leq 1}^{K}} &&& \\
		\mathrm{ho}\! \varprojlim t_{n}^{\mathbb C}(K) \ar[r] &	\cdots \ar[r]_-{\mathrm{r}_{n+1}^{K}} &  t_{n+1}^{\mathbb C}(K) 
		\ar[r]_-{\mathrm{r}_{n}^{K}} & t_{n}^{\mathbb C}(K) \ar[r]_-{\mathrm{r}_{n-1}^{K}} & \cdots \ar[r]_-{\mathrm{r}_{2}^{K}} &
		 t_{2}^{\mathbb C}(K) \ar[r]_-{\mathrm{r}_{1}^{K}}& t_{1}^{\mathbb C}(K)	\ar[r] & \ast}
		\]
where $t_{n}^{\mathbb C}(K)$ is fibrant in $B_{n-1}\topspc$, $\mathrm{r}_{\leq n}^{K}$ is a weak equivalence in 
$B_{n-1}\topspc$ and $\mathrm{r}_{n}^{K}$ (resp. $t_{1}^{\mathbb C}(K)\rightarrow \ast$) is a fibration in 
$B_{n}\topspc$ (resp. $B_{0}\topspc$).
\end{enumerate}
\end{prop}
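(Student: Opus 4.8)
The plan is to apply the abstract mechanism of Section~\ref{sect-1}, in the formulation isolated in Remark~\ref{rmk.generic.tower}, with the model category $\mathcal N=\topspc$ and, for each $n\geq 0$, the set of maps $L_n=B_n(\mathbb C)$ of~\eqref{diag.cplx.openimm}. Everything in the statement then follows once three hypotheses are checked: (i) $\topspc$ is cellular and left proper; (ii) each $B_n(\mathbb C)$ is a set; and (iii) every weak equivalence of $B_{n+1}\topspc$ is a weak equivalence of $B_{n}\topspc$.

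For (i) I would simply cite the standard facts from \cite{MR1944041}: the Quillen model category $\topspc$ is cellular (this cellularity is already invoked to form $L_{n-1}\topspc$ in Proposition~\ref{prop.Postnikov.tower}) and it is left and right proper. For (ii), note that here $Sm_{X}$ is the essentially small category of smooth quasi-projective $\mathbb C$-varieties, so $B_n$ is a set by the argument of Remark~\ref{rmk.Bn.set}, and applying the functor $\mathbb C_{R}$ of Proposition~\ref{prop.cpxreal.lQuillen} produces the set $B_n(\mathbb C)$; in fact its members are cofibrations of $\topspc$, since $\mathbb C_{R}$ is left Quillen and each $\iota_{U,Y}$, being a monomorphism, is a cofibration of $\unsmot'$. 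For (iii), since codimension $\geq n+2$ implies codimension $\geq n+1$ one has $B_{n+1}\subseteq B_n$ in $\unsmot'$, hence $B_{n+1}(\mathbb C)\subseteq B_n(\mathbb C)$ in $\topspc$; consequently every $B_n(\mathbb C)$-local object is $B_{n+1}(\mathbb C)$-local, so every $B_{n+1}(\mathbb C)$-local equivalence is a $B_n(\mathbb C)$-local equivalence. This also gives, as in Theorem~\ref{tower.Quillenfunctors}, the tower of left Quillen functors $\topspc\to\cdots\to B_{n+1}\topspc\to B_n\topspc\to\cdots\to B_0\topspc$ in which every arrow is the identity.

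With (i)--(iii) established, Remark~\ref{rmk.generic.tower} applies verbatim: repeating Definition~\ref{def.slicetower} with the functorial factorizations of the model categories $B_n\topspc$ produces endofunctors $\tau_n^{\mathbb C}$ of $\topspc$ together with natural transformations $id\to\tau_n^{\mathbb C}$ and $\tau_{n+1}^{\mathbb C}\to\tau_n^{\mathbb C}$; arguing as in Lemma~\ref{lem.slices.preserve.wequivs} and Corollary~\ref{cor.slices.preserve.wequivs} these functors preserve the weak equivalences of $\topspc$, and hence descend along the localization $\topspc\to\htop$ to the desired functors $t_n^{\mathbb C}$ and natural transformations $\mathrm r_{\leq n},\mathrm r_n$, which is the analogue of Proposition~\ref{prop.slices=>homotopy}. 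The stated fibrancy of $t_n^{\mathbb C}(K)$ in $B_{n-1}\topspc$, the fact that $\mathrm r_{\leq n}^{K}$ is a weak equivalence of $B_{n-1}\topspc$, and the fact that $\mathrm r_n^{K}$ and $t_1^{\mathbb C}(K)\to\ast$ are fibrations of $B_n\topspc$ and $B_0\topspc$ respectively, are then exactly the translations of Proposition~\ref{prop.properties.slice.tower} and Theorem~\ref{thm.slice.tower.derived}\eqref{thm.slice.tower.derived.c}.

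The one place that will call for attention — and the closest thing to an obstacle — is that, unlike $\unsmot$, the model category $\topspc$ does not have all objects cofibrant, so the shortcut used in the proof of Theorem~\ref{tower.Quillenfunctors} (that the identity is a cofibrant replacement functor) is not available. This, however, is not actually needed: every clause of the statement concerns only fibrations, weak equivalences, and fibrant replacements in the localized structures, and all the relevant arguments of Section~\ref{sect-1} go through without it — exactly as they already did when the same mechanism was applied to $\topspc$ to build the Postnikov tower in Proposition~\ref{prop.Postnikov.tower}. Hence the genuine content is confined to the bookkeeping (i)--(iii) above.
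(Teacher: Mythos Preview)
Your proposal is correct and follows exactly the paper's approach: the paper proves this proposition in a single line by invoking Remark~\ref{rmk.generic.tower} (``mimicking the same arguments as for the unstable slice filtration''), and you have simply spelled out the hypotheses (i)--(iii) of that remark in the present setting. Your additional observation about cofibrant replacement in $\topspc$ is a reasonable point of care, and your resolution of it is correct.
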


\begin{rmk}
		\label{rmk.real.WBreal}
By Proposition \ref{prop.genericsmoothness-Quillenequiv}, the tower in 
Proposition \ref{prop.complexPtower} is identical if we replace $B_{n}$ with 
$WB_{n}$ (see Definition \ref{def.localizing-maps2}).
\end{rmk}

Notice that by construction, for every $n\geq 0$ the Quillen adjunction 
(see Proposition \ref{prop.cpxreal.lQuillen}):
				\[ (\mathbb C _{R}, Sing,\varphi):\unsmot ' \rightarrow \topspc
				\]
induces a Quillen adjunction between the corresponding birational  categories:
		\begin{align}	
				\label{cpreal.adj}	
				(\mathbb C _{R}, Sing,\varphi):  \nbiratunsmotX ' \rightarrow B_{n}\topspc
		\end{align}
and passing to the associated homotopy categories, we obtain the corresponding derived adjunction:
		\begin{align}	
				\label{cpreal.adj.der}
				(\mathbb C _{R},b_{top}^{(n)}, \varphi ): & \mathcal H '(B_{n}) \rightarrow B_{n}\htop
		\end{align}
where $b_{top}^{(n)}$  is a fibrant replacement functor in $B_{n}\topspc$.

The main result of this section is the following:

\begin{thm}
		\label{thm.Post.real.comp}
	For every $n\geq 0$, the identity functor:
	\[ id: L_{2n}\topspc \rightarrow B_{n}\topspc\]
	is a left Quillen functor and a Quillen equivalence.
\end{thm}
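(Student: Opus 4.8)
The plan is to show that the two left Bousfield localizations $L_{2n}\topspc$ and $B_n\topspc$ of $\topspc$ have the same class of local objects, since a left Bousfield localization is determined by its local objects, and two such localizations of the same base category with the same local objects coincide with identity as a Quillen equivalence (by \cite[Thm.~3.3.19]{MR1944041} and the universal property). So the whole theorem reduces to the following statement about pointed spaces: a fibrant space $K$ is $L_{2n}\mathbf T$-local (i.e.\ $2n$-truncated, so $K\cong P_{2n}K$ by Theorem \ref{prop.Postnikov.agree}) if and only if $K$ is $B_n(\mathbb C)$-local, i.e.\ $\iota_{U,Y}^*\colon \operatorname{Map}_{\mathsf{top}}(Y(\mathbb C)_+,K)\to \operatorname{Map}_{\mathsf{top}}(U(\mathbb C)_+,K)$ is a weak equivalence for every $\iota_{U,Y}\in B_n$ (equivalently, by Remark \ref{rmk.real.WBreal}, for every $\iota_{U,Y}\in WB_n$).

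The key geometric input is: if $\iota_{U,Y}\colon U\hookrightarrow Y$ is an open immersion in $Sm_X$ with smooth closed complement $Z$ of codimension $\geq n+1$, then the inclusion of complex manifolds $U(\mathbb C)\hookrightarrow Y(\mathbb C)$ is $2n$-connected. This is a classical transversality/general-position fact: removing a closed complex submanifold of real codimension $2(n+1)\geq 2n+2$ from a manifold does not change homotopy groups in degrees $\leq 2n$ and is surjective on $\pi_{2n+1}$ — one sees this by a tubular-neighborhood argument (the pair $(Y(\mathbb C),U(\mathbb C))$ is $(2n+1)$-connected since the Thom space of a real rank $2(n+1)$ bundle is $(2n+1)$-connected), or by Alexander duality / a Mayer–Vietoris plus the fact that the link of $Z(\mathbb C)$ is a sphere bundle with fibre $S^{2n+1}$. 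Granting this, one direction is immediate: if $K$ is $2n$-truncated, then a $2n$-connected map $f$ induces a weak equivalence on $\operatorname{Map}_{\mathsf{top}}(-,K)$ (standard obstruction theory, since the relative cells of a CW model of $f$ have dimension $\geq 2n+1$), so $K$ is $B_n(\mathbb C)$-local.

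For the converse — $B_n(\mathbb C)$-local implies $2n$-truncated — the plan is to exhibit, for each $m\leq 2n$, a specific pair $\iota_{U,Y}\in WB_n$ whose associated map $\mathbb C_R(\iota_{U,Y})$ is (up to weak equivalence and a disjoint basepoint) the inclusion $S^{m}\hookrightarrow D^{m+1}$, or more precisely a map whose cofiber realizes $S^{m+1}$ so that $B_n(\mathbb C)$-locality forces $\pi_{m+1}(K)=0$. The natural candidate is $Y=\mathbb A^{n+1}_{\mathbb C}$ (or an affine space of suitable dimension) with $Z=\{0\}$, or $Y$ a total space of a vector bundle over projective space and $Z$ the zero section, arranged so that $Y(\mathbb C)\setminus Z(\mathbb C)$ deformation retracts onto a sphere $S^{2n+1}$ and hence the map $U(\mathbb C)_+\to Y(\mathbb C)_+$ is, after localization, the inclusion $S^{2n+1}_+\to \ast_+$; but to get all spheres up to dimension $2n$ one takes $Z$ of intermediate codimension $\geq n+1$ inside a larger $Y$, e.g.\ $Y=\mathbb A^{N}_{\mathbb C}$, $Z$ a linear subspace of codimension exactly $n+1$, giving $U(\mathbb C)\simeq S^{2n+1}$; and for smaller codimension-controlled examples one intersects. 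The cleanest route: for each $1\leq k\leq 2n+1$ find $\iota_{U,Y}\in WB_n$ with $Y(\mathbb C)$ contractible and $U(\mathbb C)\simeq S^{k}$ for — wait, the connectivity constraint forces $k\geq 2n+1$, so in fact a single such example ($k=2n+1$) suffices to kill $\pi_{2n+1}(K)$, and then $K$ being a loop of such is handled by also localizing lower stages via the tower. I would instead argue directly: $B_n(\mathbb C)$-local $K$ is local with respect to $S^{2n+1}_+\to D^{2n+2}_+$ (from $Y=\mathbb A^{n+1}$, $Z=0$, suitably reindexed with products with $\mathbb A^1$), hence $\pi_j K=0$ for $j\geq 2n+1$, i.e.\ $K$ is $2n$-truncated.

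The main obstacle I expect is pinning down the precise family of open immersions realizing the generating cofibrations of $L_{2n}\topspc$ after applying $\mathbb C_R$: one must check that $\{S^{2n+1}_+\to D^{2n+2}_+\}$-locality and $\{d_{2n}\}=\{S^{2n}_+\to D^{2n+1}_+\}$-locality agree, i.e.\ that being local with respect to $S^{2n}_+\hookrightarrow D^{2n+1}_+$ is the same as being local with respect to $S^{2n+1}_+\hookrightarrow D^{2n+2}_+$ — which is true (both say $\pi_j=0$ for $j>2n$) but the indexing shift between ``codimension $n+1$'' and ``$2n$-truncation'' ($2(n+1)-1 = 2n+1 > 2n$, so codimension $n+1$ complement kills exactly $\pi_j$ for $j\geq 2n+1$, matching $L_{2n}$) must be verified carefully, and one must confirm that no pair in $WB_n$ produces a map of connectivity exactly $2n$ (which would over-localize); this follows precisely because real codimension of $Z(\mathbb C)$ is $2(\operatorname{codim}_Y Z)\geq 2n+2$, an even number, so the ``extra'' connectivity $2n+1$ is automatic and the localization lands exactly at $L_{2n}$ rather than $L_{2n+1}$. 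Verifying this parity matching — that every $WB_n$ map is $2n$-connected and that the extremal case $\operatorname{codim}=n+1$ gives connectivity exactly $2n+1$, neither more nor less after the basepoint adjustment — is the technical heart, and it is exactly what makes the target $L_{2n}$ (even-indexed) rather than $L_{2n+1}$.
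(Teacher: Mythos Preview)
Your overall strategy matches the paper's: both directions are handled by comparing generating maps, using the open immersion $\mathbb{A}^{n+1}_{\mathbb C}\setminus 0 \hookrightarrow \mathbb{A}^{n+1}_{\mathbb C}$ (whose $\mathbb C$-points realize $S^{2n+1}_+\to D^{2n+2}_+$, up to weak equivalence) to show $id:L_{2n}\topspc\to B_n\topspc$ is left Quillen, and showing that every map in $WB_n(\mathbb C)$ is an $L_{2n}$-local equivalence for the converse. The substantive difference lies in how you establish the connectivity of $U(\mathbb C)\hookrightarrow Y(\mathbb C)$ for $\iota_{U,Y}\in WB_n$. You invoke smooth transversality: since $Z(\mathbb C)\subset Y(\mathbb C)$ is a closed smooth submanifold of real codimension at least $2n+2$, any map $S^k\to Y(\mathbb C)$ with $k\leq 2n+1$ (resp.\ any homotopy $S^k\times I\to Y(\mathbb C)$ with $k\leq 2n$) can be made transverse to, and hence disjoint from, $Z(\mathbb C)$, yielding isomorphisms on $\pi_i$ for $i\leq 2n$ directly. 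This is correct and considerably more elementary than the paper's route (Lemma~\ref{lem.qproj.conn}), which proceeds by an induction on $\dim Y$ via Bertini and the Zariski--Lefschetz theorem of Hamm--L\^e to handle $\pi_1$ (Lemmas~\ref{lem.qproj.conn2} and~\ref{lem.qproj.stepc}), then passes to universal covers to reduce to the simply-connected case, and finally appeals to a Thom-space homology computation together with Whitehead's theorem (Lemma~\ref{lem.realmflds.stepd}). Your argument bypasses the $\pi_1$ issue entirely and needs no algebro-geometric input beyond the reduction from $B_n$ to $WB_n$; the paper's approach, while heavier, keeps closer to tools that generalize to stratified or singular settings. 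A minor point: your discussion of the converse direction wanders through several false starts---the clean statement is simply that $\mathbb{A}^{n+1}\setminus 0\to\mathbb{A}^{n+1}$ already lies in $WB_n$, its realization is weakly equivalent to $S^{2n+1}_+\to\ast_+$, and locality with respect to this single map forces $\Omega^{2n+1}K\simeq\ast$ at every basepoint, i.e.\ $K$ is $2n$-truncated.
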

\begin{proof}
	We observe that there is a commutative diagram in $\topspc$:
		\[	\xymatrix{\partial D^{2n+2} \ar[d] \ar[r]^-{d_{2n+1}}& D^{2n+2} \ar[d]\\
						\mathbb C^{n+1}\backslash 0 \ar[r]^-{\tilde{d}_{n}}& \mathbb C^{n+1}}
		\]
	where the vertical arrows are weak equivalences in $\topspc$.  Thus, by construction the left Bousfield
	localization of $\topspc$ with respect to $d_{2n+1}$ and $\tilde{d}_{n}$ are identical \cite[4.1.1 and 4.1.2]{MR1944041}.  On the other hand,
	$\iota _{n}: \mathbb A^{n+1}_{\mathbb C}\backslash 0\rightarrow \mathbb A^{n+1}_{\mathbb C}$ is in 
	$B_{n}$, and $\mathbb C _{R}(\iota _{n})=\tilde{d}_{n}$.  By the universal property of left Bousfield 
	localizations (see \cite[3.3.19.(1) and 3.1.1.(1)]{MR1944041}), it follows that the identity functor is a left
	Quillen functor.
	
	Using the universal property of left Bousfield localizations again, we observe that in order to conclude that we have a Quillen equivalence
	it is enough to show that every map in $B_{n}(\mathbb C)$ (see \eqref{diag.cplx.openimm}) 
	is a weak equivalence in $L_{2n}\topspc$.   On the other hand, by Proposition \ref{prop.genericsmoothness-Quillenequiv}
	and Remark \ref{rmk.real.WBreal} it suffices to show that
	every map in $WB_{n}(\mathbb C)$  
	is a weak equivalence in $L_{2n}\topspc$.
	But this follows directly from Lemma \ref{lem.qproj.conn}.
\end{proof}

\begin{cor}
		\label{cor.loc.resp.singfun}
Let $K$ in $\topspc$ be a topological space and $n\geq 0$ an arbitrary integer.  Then
the following are equivalent:
\begin{enumerate}
	\item \label{cor.loc.resp.singfun.a} $K$ is fibrant in $L_{2n}\topspc$,
	\item \label{cor.loc.resp.singfun.b} $K$ is fibrant in $B_{n}\topspc$,
\end{enumerate}
\end{cor}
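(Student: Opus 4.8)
The plan is to deduce this corollary formally from Theorem \ref{thm.Post.real.comp} together with Hirschhorn's description of the fibrant objects of a left Bousfield localization: in a left proper cellular model category, an object is fibrant in $L_{S}(-)$ if and only if it is fibrant in the ambient model category \emph{and} $S$-local (see \cite[Prop.\,3.4.1]{MR1944041}). Since both $L_{2n}\topspc$ and $B_{n}\topspc$ are left Bousfield localizations of $\topspc$, an object fibrant in either of them is in particular fibrant in $\topspc$; hence it suffices to prove that a $\topspc$-fibrant $K$ is $L_{2n}\mathbf{T}$-local if and only if it is $B_{n}(\mathbb{C})$-local.

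For the implication \eqref{cor.loc.resp.singfun.b}$\Rightarrow$\eqref{cor.loc.resp.singfun.a} I would simply use that, by Theorem \ref{thm.Post.real.comp}, the identity functor $id:L_{2n}\topspc\rightarrow B_{n}\topspc$ is a left Quillen functor; its right adjoint $id:B_{n}\topspc\rightarrow L_{2n}\topspc$ is therefore a right Quillen functor and hence preserves fibrant objects. For the converse \eqref{cor.loc.resp.singfun.a}$\Rightarrow$\eqref{cor.loc.resp.singfun.b} I would invoke that $id:L_{2n}\topspc\rightarrow B_{n}\topspc$ is a Quillen \emph{equivalence}; concretely, the proof of Theorem \ref{thm.Post.real.comp} shows, via Proposition \ref{prop.genericsmoothness-Quillenequiv}, Remark \ref{rmk.real.WBreal} and Lemma \ref{lem.qproj.conn}, that every map in $B_{n}(\mathbb{C})$ is a weak equivalence in $L_{2n}\topspc$, i.e. an $L_{2n}\mathbf{T}$-local equivalence. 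Now if $K$ is fibrant in $L_{2n}\topspc$ then $K$ is $L_{2n}\mathbf{T}$-local by \cite[Prop.\,3.4.1]{MR1944041}, so by the very definition of an $L_{2n}\mathbf{T}$-local equivalence each $f\in B_{n}(\mathbb{C})$ induces a weak equivalence of simplicial sets on homotopy function complexes into $K$; since $K$ is also fibrant in $\topspc$, this says precisely that $K$ is $B_{n}(\mathbb{C})$-local, hence fibrant in $B_{n}\topspc$ by \cite[Prop.\,3.4.1]{MR1944041} again.

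There is no genuine obstacle here: the argument is bookkeeping, and all the substantive input — the comparison of the localizing maps $d_{2n+1}$ and the complex points of $\mathbb{A}^{n+1}\setminus 0$, the Quillen adjunction of Proposition \ref{prop.cpxreal.lQuillen}, and the connectivity estimate of Lemma \ref{lem.qproj.conn} — has already been absorbed into Theorem \ref{thm.Post.real.comp}. If one prefers a more uniform phrasing, the two implications can be merged by remarking that a Quillen equivalence between two left Bousfield localizations of $\topspc$ whose underlying functor is the identity forces the two classes of local objects (equivalently, of local equivalences) to coincide, so that $L_{2n}\topspc$ and $B_{n}\topspc$ agree as model categories and in particular have the same fibrant objects.
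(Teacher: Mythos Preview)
Your proof is correct and is essentially the paper's approach: the paper simply says ``This follows immediately from Theorem~\ref{thm.Post.real.comp}'', and you have spelled out in detail why a Quillen equivalence given by the identity functor between two left Bousfield localizations of the same model category forces the fibrant objects to coincide. Your expansion is accurate and the references to \cite[Prop.\,3.4.1]{MR1944041} are the right ones.
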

\begin{proof}
	This follows immediately from Theorem \ref{thm.Post.real.comp}.
\end{proof}

\begin{rmk}
	\label{rmk.evenPosttow.alggeom}
Combining Theorems \ref{thm.Post.real.comp} and \ref{prop.Postnikov.agree} we observe that
we have obtained an algebro-geometric construction of the odd
part of the Postnikov tower (see Proposition \ref{prop.Postnikov.tower}):
	\[	
		\xymatrix@C=1.5pc{&&& \ar[d]|{p_{\leq 2n+1}^{K}} 
		\ar[dl]|{p_{\leq 2n-1}^{K}} \ar[dlll] K 
		\ar[drr]|{p_{\leq 3}^{K}} \ar[drrr]|{p_{\leq 1}^{K}} &&& \\
	\mathrm{ho}\! \varprojlim P _{2n+1}(K) \ar[r] &	\cdots \ar[r] &  P _{2n+1}(K) \ar[r] &
 	P _{2n-1}(K) \ar[r] & \cdots \ar[r] & P_{3}(K) \ar[r] & P_{1}(K)	\ar[r] & \ast}
		\]
\end{rmk}



	In the rest of this section we will write $Y$ for a quasi-projective variety in $Sm_{X}$
	with a fixed embedding into projective space, i.e.
	an open immersion $Y\rightarrow \bar{Y}$ and a closed 
	embedding $\bar{Y}\rightarrow \mathbb P ^{N}$ for a suitable $N$.
	
	Given a map $\iota _{U,Y}$ in $B_{n}$ and a hyperplane $H$ in $\mathbb P ^{N}$,
	we will write $e_{1}:H\cap Y\rightarrow Y$, $e_{2}:H\cap U\rightarrow U$ for the closed embedding of the
	corresponding hyperplane sections.

\begin{lem}
	\label{lem.qproj.conn2}
Let $Y\in Sm_{X}$ be a smooth irreducible quasi-projective variety of dimension $d\geq 2$, and
 $\iota _{U,Y}:U_{+}\rightarrow Y_{+}$ a map in $B_{n}$.  Then there exists a hyperplane $H$ in $\mathbb P^{N}$ such that in the following commutative diagram:
		\[	\xymatrix{H\cap U(\mathbb C) _{+} \ar[rr]^-{\mathbb C _{R}(e_{2})} \ar[d]_-{\mathbb C_{R}(h)} &&
					U(\mathbb C)_{+} \ar[d]^-{\mathbb C _{R}(\iota _{U,Y})}\\
			H\cap Y(\mathbb C) _{+}\ar[rr]_-{\mathbb C _{R}(e_{1})}&& Y(\mathbb C)_{+} }
		\]
\begin{enumerate}
	\item \label{lem.qproj.conn2.a} $\mathbb C _{R}(e_{1})$ and $\mathbb C _{R}(e_{2})$ 
	are weak equivalences in $L_{d-2}\topspc$,
	\item \label{lem.qproj.conn2.bb} $H\cap Y$ is smooth of dimension $d-1$, 
	\item \label{lem.qproj.conn2.b} If $n\leq d-2$ then $h:H\cap U _{+}\rightarrow H\cap Y_{+}$ is in $B_{n}$, and 
	\item \label{lem.qproj.conn2.c} If
			$n=d-1$ then $h=id$, i.e. $H\cap (Y\backslash U)=\emptyset$.
\end{enumerate}
\end{lem}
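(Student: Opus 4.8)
The plan is to exhibit one hyperplane $H\subseteq\mathbb{P}^{N}$ that is \emph{generic} for finitely many conditions, each valid on a dense open subset of the dual space $(\mathbb{P}^{N})^{\vee}$; as $(\mathbb{P}^{N})^{\vee}$ is irreducible, such an $H$ exists. Set $Z=Y\backslash U$: it is closed in $Y$ with $codim_{Y}Z\geq n+1$ (Definition \ref{def.codimension}), so $\kdim\,Z\leq d-n-1$, and $U$ is dense open in $Y$ (since $codim_{Y}Z\geq 1$), hence smooth, irreducible, quasi-projective of dimension $d$. I would fix a Whitney stratification of $\bar{Y}\subseteq\mathbb{P}^{N}$ with nonsingular strata in which each of $Y$, $U$ and $Z$ is a union of strata. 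The stratified Lefschetz hyperplane theorem of Goresky and MacPherson then furnishes a proper closed subset of $(\mathbb{P}^{N})^{\vee}$ outside of which $H$ is transverse to every stratum and, simultaneously, the pairs $(Y(\mathbb{C}),(H\cap Y)(\mathbb{C}))$ and $(U(\mathbb{C}),(H\cap U)(\mathbb{C}))$ are $(d-1)$-connected.

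First I would settle \eqref{lem.qproj.conn2.bb}, \eqref{lem.qproj.conn2.b} and \eqref{lem.qproj.conn2.c}, which only involve Bertini's theorems and a dimension count. For $H$ transverse to the stratum $Y$ one gets that $H$ does not contain $\bar{Y}$ and that $H\cap Y$ is smooth of dimension $d-1$; since we work in characteristic zero and $d\geq 2$, Bertini's irreducibility theorem (applied after restricting hyperplanes to the linear span of $\bar{Y}$) lets us further require $H\cap Y$ irreducible, which gives \eqref{lem.qproj.conn2.bb}. For $H$ transverse to the strata of $Z$ no component of $Z$ is contained in $H$, so $\kdim(H\cap Z)\leq\kdim\,Z-1\leq d-n-2$ (with $H\cap Z=\emptyset$ when $\kdim\,Z\leq0$); hence the closed complement $H\cap Z$ of the open immersion $h:H\cap U_{+}\to H\cap Y_{+}$ satisfies $codim_{H\cap Y}(H\cap Z)\geq(d-1)-(d-n-2)=n+1$. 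Thus, if $n\leq d-2$, the map $h$ lies in $B_{n}$ (Definition \ref{def.localizing-maps}), which is \eqref{lem.qproj.conn2.b}; and if $n=d-1$ then $\kdim\,Z\leq0$, so a generic $H$ misses the finite set $Z$ and $h=id$, which is \eqref{lem.qproj.conn2.c}.

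It remains to prove \eqref{lem.qproj.conn2.a}. By the Lefschetz hyperplane theorem invoked in the first paragraph, for generic $H$ the pairs $(Y(\mathbb{C}),(H\cap Y)(\mathbb{C}))$ and $(U(\mathbb{C}),(H\cap U)(\mathbb{C}))$ are $(d-1)$-connected, so $\mathbb{C}_{R}(e_{1})$ and $\mathbb{C}_{R}(e_{2})$ induce isomorphisms on $\pi_{i}$ for all $i\leq d-2$. By Hirschhorn's identification of the localization tower with the Postnikov tower (Theorem \ref{prop.Postnikov.agree}), a map in $\topspc$ is a weak equivalence in $L_{d-2}\topspc$ precisely when it induces isomorphisms on $\pi_{i}$ for all $i\leq d-2$; hence $\mathbb{C}_{R}(e_{1})$ and $\mathbb{C}_{R}(e_{2})$ are weak equivalences in $L_{d-2}\topspc$, giving \eqref{lem.qproj.conn2.a}. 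Taking $H$ in the intersection of the finitely many dense open subsets of $(\mathbb{P}^{N})^{\vee}$ used above produces a single hyperplane satisfying \eqref{lem.qproj.conn2.a}--\eqref{lem.qproj.conn2.c} at once, and the displayed square commutes because all four maps are induced by inclusions of subvarieties. The main obstacle is \eqref{lem.qproj.conn2.a}: transversality alone is useless here — the inclusion of a generic codimension-two submanifold of $Y(\mathbb{C})$ is only $1$-connected — and one genuinely needs the Lefschetz hyperplane theorem in a form valid for possibly non-compact, even affine, smooth varieties, applied compatibly with the Bertini genericity of \eqref{lem.qproj.conn2.b}--\eqref{lem.qproj.conn2.c}.
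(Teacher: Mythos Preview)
Your proposal is correct and follows essentially the same strategy as the paper: Bertini-type genericity for parts \eqref{lem.qproj.conn2.bb}--\eqref{lem.qproj.conn2.c}, and a Lefschetz hyperplane theorem for the quasi-projective (possibly non-compact) varieties $Y$ and $U$ for part \eqref{lem.qproj.conn2.a}. The only substantive difference is the reference you invoke: the paper uses the Zariski--Lefschetz theorem of Hamm--L\^{e} (which directly gives that $Y(\mathbb{C})$ is obtained from $(H\cap Y)(\mathbb{C})$ by attaching cells of dimension $\geq d$, and likewise for $U$), then deduces the $\pi_{i}$-isomorphisms for $i\leq d-2$ via cellular approximation and \cite[Prop.~1.5.2]{MR1944041}; you instead use the Goresky--MacPherson stratified version to obtain the $(d-1)$-connectivity of the pairs directly. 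Both routes yield the same conclusion, and your explicit mention of Bertini's irreducibility theorem (needed so that $H\cap Y$ is irreducible and $h$ genuinely lies in $B_{n}$) is a point the paper leaves implicit in its Bertini citation.
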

\begin{proof}
	By Proposition 1.5.2 in \cite{MR1944041} in order to prove \eqref{lem.qproj.conn2.a}
	it suffices to show that $\mathbb C _{R}(e_{1})$ (resp. $\mathbb C _{R}(e_{2})$) induces a bijection on 
	the homotopy groups $\pi _{i}$ for
	$i\leq d-2$ and for every choice of base point in $H\cap Y(\mathbb C) _{+}$ (resp. $H\cap U(\mathbb C) _{+}$).
	
	Let $Z=Y\backslash U$ be the closed complement of $U$ in $Y$, with codimension $c$ in $Y$.  Since $\iota _{U,Y}\in B_{n}$
	we observe that $d\geq c\geq n+1$.  Since $Y$ is smooth, by the theorem of Bertini
	\cite[Cor. 3.6]{MR0265371}
	there exists a Zariski open dense subset $\Omega$ of the projective variety $\check{\mathbb P} ^{N}$ of hyperplanes in $\mathbb P ^{N}$
	such that for every
	hyperplane $H$ in $\Omega$, \eqref{lem.qproj.conn2.bb}
	holds and the codimension of $H\cap Z$ in $Y$ is $c+1$.  Thus, we deduce that \eqref{lem.qproj.conn2.b} and \eqref{lem.qproj.conn2.c}
	also hold for any $H$ in $\Omega$.
	
	Moreover, $U$ and $Y$ are smooth, so by the Zariski-Lefschetz theorem of Hamm-L\^{e} \cite[Thm. 1.1.3]{MR820315} we deduce that
	there exists a hyperplane $H$ in $\Omega$ such that
	$Y(\mathbb C)$ (resp. $U(\mathbb C)$)
	has the homotopy type
	of a space obtained from $H\cap Y(\mathbb C)$ (resp. $H\cap U(\mathbb C)$)
	by attaching cells $\partial D^{r}\rightarrow D^{r}$
	with $r\geq d$.  	
	Since all the spaces in the diagram above are $CW$-complexes, we deduce by
	the cellular approximation theorem that $\mathbb C _{R}(e_{1})$ and $\mathbb C _{R}(e_{2})$ 
	induce a bijection on $\pi _{i}$
	for every choice of base point and $i\leq r-2$.
	Hence, we conclude that \eqref{lem.qproj.conn2.a} holds since $d-2\leq r-2$. 
\end{proof}

\begin{lem}
		\label{lem.qproj.stepc}
	Let $Y\in Sm_{X}$ be a smooth irreducible quasi-projective variety of dimension $d$, and
	$\iota _{U,Y}:U_{+}\rightarrow Y_{+}$
	a map in $B_{n}$ with $n>0$.  Then:
		\[	\pi_{1}(\mathbb C _{R}(\iota _{U,Y})):\pi_{1}(U(\mathbb C)_{+})\rightarrow \pi_{1}(Y(\mathbb C)_{+})
		\]
	is an isomorphism for every choice of base point in $U(\mathbb C)_{+}$.
\end{lem}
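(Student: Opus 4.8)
The plan is to induct on $d=\dim Y$. Since $\iota_{U,Y}\in B_{n}$ with $n>0$, the closed complement $Z=Y\backslash U$ has codimension $c$ with $2\leq n+1\leq c\leq d$, so in particular $d\geq 2$. Because $Y$ is irreducible over $\mathbb C$, the spaces $Y(\mathbb C)$ and the dense open $U(\mathbb C)$ are path connected, so it suffices to prove that $\pi_{1}(\mathbb C_{R}(\iota_{U,Y}))$ is an isomorphism at one base point lying in $U(\mathbb C)$; for the disjoint base point of $U(\mathbb C)_{+}$ the statement is trivial, both sides being the trivial group.

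For the base case $d=2$ we necessarily have $n=1$ and $c=2$, so $Z(\mathbb C)$ is a finite subset of the connected smooth real $4$-manifold $Y(\mathbb C)$ and $U(\mathbb C)=Y(\mathbb C)\backslash Z(\mathbb C)$. Removing finitely many points from a connected manifold of real dimension at least $3$ does not change $\pi_{1}$: covering $Y(\mathbb C)$ by $U(\mathbb C)$ together with small pairwise disjoint euclidean coordinate balls $B_{i}$ around the points of $Z(\mathbb C)$, each $B_{i}$ is contractible and each $B_{i}\cap U(\mathbb C)$ is homotopy equivalent to $S^{3}$, hence simply connected, so van Kampen's theorem gives $\pi_{1}(U(\mathbb C))\xrightarrow{\cong}\pi_{1}(Y(\mathbb C))$. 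Equivalently, by transversality a loop, and a null-homotopy of a loop, in $Y(\mathbb C)$ can be pushed off the real-codimension-$4$ set $Z(\mathbb C)$.

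Now assume $d\geq 3$ and apply Lemma \ref{lem.qproj.conn2} to produce a hyperplane $H$ and the commutative square relating $\mathbb C_{R}(\iota_{U,Y})$, $\mathbb C_{R}(h)$, $\mathbb C_{R}(e_{1})$ and $\mathbb C_{R}(e_{2})$. By Lemma \ref{lem.qproj.conn2}\eqref{lem.qproj.conn2.a} the maps $\mathbb C_{R}(e_{1})$ and $\mathbb C_{R}(e_{2})$ are weak equivalences in $L_{d-2}\topspc$; since $d-2\geq 1$, Proposition \ref{prop.Postnikov.tower} together with Theorem \ref{prop.Postnikov.agree} identifies such maps with those inducing isomorphisms on $\pi_{i}$ for all $i\leq d-2$ and all base points, so in particular $\mathbb C_{R}(e_{1})$ and $\mathbb C_{R}(e_{2})$ are $\pi_{1}$-isomorphisms. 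For the fourth edge of the square: if $n=d-1$ then Lemma \ref{lem.qproj.conn2}\eqref{lem.qproj.conn2.c} gives $h=id$; if instead $n\leq d-2$ then Lemma \ref{lem.qproj.conn2}\eqref{lem.qproj.conn2.bb} and \eqref{lem.qproj.conn2.b} give that $H\cap Y$ is smooth irreducible quasi-projective of dimension $d-1\geq 2$ and $h\in B_{n}$ with $n>0$, so the inductive hypothesis applies to $h$ and $\mathbb C_{R}(h)$ is a $\pi_{1}$-isomorphism. Choosing a base point in the path-connected space $(H\cap U)(\mathbb C)$ and comparing fundamental groups around the square, three of the four maps are isomorphisms, hence so is the fourth, $\pi_{1}(\mathbb C_{R}(\iota_{U,Y}))$ at that base point; path connectedness of $U(\mathbb C)$ and $Y(\mathbb C)$ then yields the statement for every base point.

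The step I expect to be the main obstacle is the base case $d=2$. The Lefschetz-type machinery packaged in Lemma \ref{lem.qproj.conn2} controls $\pi_{1}$ of a map only after passing to a hyperplane section of dimension at least $3$, so the codimension-$2$ situation — removing finitely many points from a connected complex surface — has to be treated directly, which is exactly what forces the van Kampen (or transversality) input above. Everything else is bookkeeping already contained in Lemma \ref{lem.qproj.conn2}: preservation of $n$ and of the codimension bound under a generic hyperplane section, smoothness and irreducibility of $H\cap Y$, and the behaviour of base points across the comparison square.
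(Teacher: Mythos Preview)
Your proof is correct and follows essentially the same route as the paper: induction on $d$, the base case $d=2$ handled by van Kampen on the complement of finitely many points in a real $4$-manifold, and the inductive step via the hyperplane-section square of Lemma~\ref{lem.qproj.conn2} together with the case split $n=d-1$ versus $n\leq d-2$. The only cosmetic differences are that the paper reduces the base case to removing a single point before invoking van Kampen, and it cites Hirschhorn's Proposition~1.5.2 directly rather than going through Proposition~\ref{prop.Postnikov.tower} and Theorem~\ref{prop.Postnikov.agree} to extract the $\pi_{1}$-isomorphism from the $L_{d-2}\topspc$-equivalence.
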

\begin{proof}
Since the map $\iota _{U,Y}$ preserves the disjoint point $+$,
we can assume that the base point is in $U(\mathbb C)$.  Moreover, $Y$ is irreducible and
$U$ is open in $Y$, so we deduce that $U$ is also irreducible and in particular $U(\mathbb C)$
is path connected.  Hence the result is independent of the choice of base point, and we will omit
any reference to it in the rest of the proof.
We proceed by induction on the dimension $d$ of $Y$.
We observe that $d\geq 2$, since $n>0$.

Let $Z=Y\backslash U$ be the closed complement  of $U$ in $Y$, which has codimension $c\geq n+1$ in $Y$ since
$\iota _{U,Y}\in B_{n}$.  So,
if $d=2$ then $\dim Z=0$, since $n>0$. 
Thus, $Z$ is a disjoint union of closed points (since we are assuming that $Z$ is reduced). We observe that it suffices to consider the case when
$Z$ consists of only one point, since $\iota _{U,Y}$ can be written as a composition of open immersions
where the closed complement consists of only one point.  If $Z=\mathrm{Spec}\; \mathbb C$, then
there exists a pushout square of open immersions in $\topspc$ of the form:
	\begin{align*}
		\xymatrix{D^{4}\backslash 0 \ar[rr] \ar[d]&& D^{4} \ar[d]\\
		U(\mathbb C) \ar[rr]_-{\mathbb C _{R}(\iota _{U,Y})}&& Y(\mathbb C).}
	\end{align*} 
By the theorem of Van Kampen we deduce that $\pi_{1}(\mathbb C _{R}(\iota _{U,Y}))$ is equal to:
	\[	\pi_{1}(U(\mathbb C))\rightarrow \mathrm{colim} \left ( 
			\begin{array}{c} 
				\xymatrix{\pi_{1}(D^{4}\backslash 0)=\ast \ar[rr] \ar[d]&& \pi _{1}(D^{4})=\ast \\
		\pi _{1}(U(\mathbb C)) && }
			\end{array}\right ) \cong \pi_{1}(U(\mathbb C))
	\]
which is an isomorphism, so the claim holds in this case.

If $d\geq 3$, then by Lemma \ref{lem.qproj.conn2} there exists a hyperplane $H$ in $\mathbb{P} ^{N}$ such that
in the following commutative diagram:
	\[	\xymatrix{H\cap U(\mathbb C) _{+} \ar[rr]^-{\mathbb C _{R}(e_{2})} \ar[d]_-{\mathbb C_{R}(h)} &&
					U(\mathbb C)_{+} \ar[d]^-{\mathbb C _{R}(\iota _{U,Y})}\\
			H\cap Y(\mathbb C) _{+}\ar[rr]_-{\mathbb C _{R}(e_{1})}&& Y(\mathbb C)_{+} }
		\]
$\mathbb C _{R}(e_{1})$ and $\mathbb C _{R}(e_{2})$ are weak equivalences in 
$L_{d-2}\topspc$.  We observe that $d-2\geq 3-2=1$,
so Proposition 1.5.2 in \cite{MR1944041} impies that $\mathbb C _{R}(e_{1})$ and $\mathbb C _{R}(e_{2})$
induce a bijection on  the corresponding fundamental groups $\pi _{1}$.

If $n=d-1$, then by Lemma \ref{lem.qproj.conn2}\eqref{lem.qproj.conn2.c}, $\mathbb C_{R}(h)$ is the identity
map; so we conclude that $\mathbb C _{R}(\iota _{U,Y})$ induces an isomorphism on $\pi_{1}$.
On the other hand, if $n\leq d-2$ then by \eqref{lem.qproj.conn2.bb}-\eqref{lem.qproj.conn2.b} 
in Lemma \ref{lem.qproj.conn2} 
we deduce that $h\in B_{n}$ and $H\cap Y$ is smooth of dimension $d-1$.
Thus, by induction on the dimension we conclude that $\mathbb C_{R}(h)$
induces an isomorphism on $\pi _{1}$.  Hence, it follows that $\mathbb C _{R}(\iota _{U,Y})$
also induces an isomorphism on $\pi _{1}$, as we wanted.
\end{proof}

Recall that a topological space $A\in \topspc$ is simply connected if it is path connected
(i.e. $\pi _{0}A=\ast$) and $\pi _{1}(A)=\ast$ for every choice of base point in $A$.

\begin{lem}
		\label{lem.realmflds.stepd}
Let $W'\rightarrow W$ be a closed embedding of real codimension $r\geq 2$ between real smooth manifolds.
Assume that $W$ and $U=W\backslash W'$ are simply connected.  Then the open immersion:
 \begin{align*}
 	\iota :U_{+}\rightarrow W_{+}
 \end{align*}
is a weak equivalence in $L_{r-2}\topspc$.
\end{lem}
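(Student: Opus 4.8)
The plan is to reduce the assertion to a statement about homotopy groups. By Proposition 1.5.2 in \cite{MR1944041} --- used exactly as in the proof of Lemma \ref{lem.qproj.conn2} --- a map in $\topspc$ is a weak equivalence in $L_{r-2}\topspc$ as soon as it induces a bijection on $\pi _{i}$ for every $i\le r-2$ and every choice of base point. Now $W$ and $U$ are simply connected, hence path connected, so $U_{+}$ and $W_{+}$ are each the disjoint union of an isolated base point with a path connected piece; thus $\iota$ is a bijection on $\pi _{0}$, it is trivially an isomorphism on all homotopy groups based at the isolated point, and --- the base point in the path connected piece being immaterial --- it remains to check that $U\rightarrow W$ induces an isomorphism $\pi _{i}(U)\xrightarrow{\ \cong\ }\pi _{i}(W)$ for $1\le i\le r-2$. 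For $i=1$ both groups vanish, so only the range $2\le i\le r-2$ is at issue, which is empty unless $r\ge 4$.

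For that range I would argue by general position. Since $W'\hookrightarrow W$ is a smooth closed submanifold of codimension $r$, any smooth map $S^{i}\rightarrow W$ with $i\le r-1$ may be perturbed, rel base point, to be transverse to $W'$, and transversality of an $i$-dimensional source to a codimension-$r$ submanifold with $i<r$ forces the preimage of $W'$ to be empty; hence the map factors through $U$, and $\pi _{i}(U)\rightarrow\pi _{i}(W)$ is onto for $i\le r-1$. Similarly any smooth homotopy $S^{i}\times[0,1]\rightarrow W$ between two maps landing in $U$ can be made transverse to $W'$ rel $S^{i}\times\{0,1\}$; when $i+1<r$, that is $i\le r-2$, the preimage of $W'$ is again empty, so the homotopy can be taken inside $U$ and $\pi _{i}(U)\rightarrow\pi _{i}(W)$ is injective for $i\le r-2$. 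Together these give the isomorphisms $\pi _{i}(U)\cong\pi _{i}(W)$ for $i\le r-2$ (and surjectivity for $i=r-1$), which by the first paragraph finishes the proof.

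The point requiring a little care --- and the place where one actually uses that $W$, $W'$, $U$ are smooth manifolds rather than arbitrary spaces --- is the perturbation under constraints: one invokes the relative form of Thom's transversality theorem to keep fixed the subspace (the base point, resp. $S^{i}\times\{0,1\}$) already mapped into $U$, having first replaced a continuous representative by a smooth one via Whitney approximation. No orientability or compactness of $W'$ intervenes, precisely because $W'$ is closed in $W$. If one prefers to exploit the simple connectivity hypothesis directly, one can instead use a tubular neighborhood of $W'$: excision identifies $H_{*}(W,U)$ with the homology of the normal disc pair, i.e. with the reduced homology of the Thom space of the normal bundle, which vanishes below degree $r$ (its cells sit in degree $0$ and in degrees $\ge r$, irrespective of orientability); since $(W,U)$ is then a simply connected pair, the relative Hurewicz theorem gives $\pi _{i}(W,U)=0$ for $i<r$, and the long exact sequence of the pair yields the same conclusion $\pi _{i}(U)\cong\pi _{i}(W)$ for $i\le r-2$.
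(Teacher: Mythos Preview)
Your proof is correct. Your primary route---general position/transversality---is genuinely different from the paper's. The paper reduces exactly as you do via \cite[Prop.~1.5.2]{MR1944041}, but then invokes simple connectivity to apply Whitehead's theorem and pass to integral homology: it shows $H_i(W/U)\cong H_i(Th(N_{W',W}))=\ast$ for $i\le r-1$ by excision plus the tubular neighborhood theorem, reducing (via Mayer--Vietoris) to the trivial-bundle case where $Th(N_{W',W})\simeq S^r\wedge W'_+$. Your transversality argument is more elementary and more general: it never uses the simple connectivity hypothesis beyond path connectedness for $\pi_0$, and in fact yields the stronger statement that $\pi_i(U)\to\pi_i(W)$ is an isomorphism for $i\le r-2$ and a surjection for $i=r-1$ with no hypotheses on fundamental groups at all. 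The paper's approach, by contrast, genuinely needs simple connectivity (to invoke Whitehead), but has the virtue of making the obstruction visible as the homology of a Thom space---which is the guiding heuristic for the whole section. Your closing alternative (excision, Thom-space homology, relative Hurewicz, long exact sequence of the pair) is essentially the paper's argument in slightly different packaging: relative Hurewicz in place of Whitehead, but the homological input is identical.
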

\begin{proof}
By Proposition 1.5.2 in \cite{MR1944041} it suffices to show that $\iota$ induces a bijection on 
the homotopy groups $\pi _{i}$ for $i\leq r-2$ and for every choice of base point in $U$.
Since $U$ and $W$ are simply connected, by Whitehead's theorem it suffices to show that the induced map $H_{i}(\iota)$
on singular homology with integral coefficients is an isomorphism for $0\leq i\leq r-2$ and
an epimorphism for $i=r-1$ (see \cite[Thm. 4]{MR0030759}, \cite[p. 181 Thm. 7.13]{MR516508}).
But this is equivalent to:
	\begin{align*}
		H_{i}(W/U)=\ast \quad  \text{ for every } i\leq r-1,
	\end{align*}
where we write $W/U$ for the homotopy cofibre of $\iota$.
Now, by excision and the tubular neighborhood theorem (cf.\,\cite[p.\,117 Cor.\,11.2]{MR0440554}): 
	\begin{align*}
		H_{i}(W/U)\cong H_{i}(Th(N_{W',W}))
	\end{align*}
where $Th(N_{W',W}$) is the Thom space of the normal bundle of the closed embedding $W'\rightarrow W$,
(see Definition \ref{def.Thom.space}).  Since
singular homology satisfies the Mayer-Vietoris property, it suffices to consider the case when $N_{W',W}$ is trivial.
In this case, $Th(N_{W',W})$ is homotopy equivalent to $S^{r}\wedge W'_{+}$, since $N_{W',W}$ is a vector bundle of rank $r$
over $W'$.  Thus, we conclude that for $i\leq r-1$:
	\begin{align*}
		H_{i}(Th(N_{W',W}))\cong H_{i}(S^{r}\wedge W'_{+})=\ast
	\end{align*}
as we wanted.
\end{proof}

\begin{cor}
		\label{cor.qproj.stepd}
Let $V'\rightarrow V$ be a closed embedding of complex codimension $c\geq 1$ between analytic manifolds.
Assume that $V$ and $U=V\backslash V'$ are simply connected.  Then the open immersion:
 \begin{align*}
 	\iota :U_{+}\rightarrow V_{+}
 \end{align*}
is a weak equivalence in $L_{2c-2}\topspc$.
\end{cor}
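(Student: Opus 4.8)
The plan is to reduce the statement directly to Lemma \ref{lem.realmflds.stepd} by forgetting the complex structure. First I would recall that every complex analytic manifold has an underlying real smooth manifold structure, and that a closed embedding $V'\rightarrow V$ of complex analytic manifolds of complex codimension $c$ is, on underlying real manifolds, a closed embedding of real smooth manifolds of real codimension $r=2c$; moreover the underlying topological space of the complement $U=V\backslash V'$ is the same whether formed in the complex-analytic or the real-smooth category, so the map $\iota:U_{+}\rightarrow V_{+}$ in $\topspc$ is literally the same map in both settings.

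Next I would verify the hypotheses of Lemma \ref{lem.realmflds.stepd} for this real closed embedding: the condition $c\geq 1$ gives $r=2c\geq 2$, as required, and the simple connectivity of $V$ and of $U$ is precisely what is assumed (simple connectivity is a property of the underlying topological spaces, so there is nothing to reprove). Lemma \ref{lem.realmflds.stepd} then yields that $\iota$ is a weak equivalence in $L_{r-2}\topspc$, and substituting $r=2c$ we get $r-2=2c-2$, which is the asserted conclusion.

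I do not anticipate any genuine obstacle. The only place where a sentence of justification is warranted is the passage from complex to real codimension, which doubles the codimension and is responsible for the even index $2c-2$ appearing in the statement; once this is noted, the localizing set $\{d_{r-1}\}$ defining $L_{r-2}\topspc$ depends only on that real dimension, so the conclusion of the real lemma transports without change.
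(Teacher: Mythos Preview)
Your proof is correct and is exactly the paper's approach: the paper's proof is the single line ``This follows from Lemma \ref{lem.realmflds.stepd}, taking $r=2c$,'' and your proposal is just a more explicit unpacking of that sentence.
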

\begin{proof}
This follows from Lemma \ref{lem.realmflds.stepd}, taking $r=2c$.
\end{proof}

\begin{lem}
		\label{lem.qproj.conn}
	Let $Y\in Sm_{X}$ be a smooth irreducible quasi-projective variety of dimension $d$, and $\iota _{U,Y}:U_{+}\rightarrow Y_{+}$
	a map in $WB_{n}$.  Then:
		\[	\mathbb C _{R}(\iota _{U,Y}):U(\mathbb C)_{+}\rightarrow Y(\mathbb C)_{+}
		\]
	is a weak equivalence in $L_{2n}\topspc$.
\end{lem}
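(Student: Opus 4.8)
The plan is to reduce the assertion to Corollary~\ref{cor.qproj.stepd} by passing to universal covering spaces. Throughout I use the fact (Proposition~1.5.2 in \cite{MR1944041}) that a map of pointed spaces is a weak equivalence in $L_{2n}\topspc$ exactly when it induces a bijection on $\pi_i$ for every $i\le 2n$ and every choice of base point. If $U=Y$ then $\iota_{U,Y}$ is an isomorphism and there is nothing to prove, so I may assume $Z:=Y\setminus U$ is nonempty; write $c:=\mathrm{codim}_Y Z$, so that $n+1\le c\le d$. Since $Y$ is irreducible, so is the dense open subscheme $U$, whence $U(\mathbb C)$ and $Y(\mathbb C)$ are connected complex manifolds.

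First I would dispose of the case $n=0$: there the only requirement is that $\mathbb C_R(\iota_{U,Y})$ induce a bijection on $\pi_0$, which holds by the connectedness just noted, so $\mathbb C_R(\iota_{U,Y})$ is a weak equivalence in $L_0\topspc$. For $n\ge 1$ (so $d\ge c\ge 2$) connectedness again settles $i=0$, and Lemma~\ref{lem.qproj.stepc} — available since $WB_n\subseteq B_n$ and $n>0$ — settles $i=1$. It remains to prove bijectivity on $\pi_i$ for $2\le i\le 2n$.

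For this I would choose a universal covering $p\colon\widetilde{Y(\mathbb C)}\to Y(\mathbb C)$ and endow $\widetilde{Y(\mathbb C)}$ with the pulled-back complex structure, so that it becomes a simply connected analytic manifold; put $\widetilde U:=p^{-1}(U(\mathbb C))$ and $\widetilde Z:=p^{-1}(Z(\mathbb C))=\widetilde{Y(\mathbb C)}\setminus\widetilde U$. Because $Z\in Sm_X$, $Z(\mathbb C)$ is a closed complex submanifold of $Y(\mathbb C)$ of complex codimension $c$, and since $p$ is a local biholomorphism the same is true of $\widetilde Z\hookrightarrow\widetilde{Y(\mathbb C)}$. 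The crucial point — the one place where Lemma~\ref{lem.qproj.stepc} is genuinely needed — is that the restriction $p|\colon\widetilde U\to U(\mathbb C)$ is again a covering map, and since the homomorphism $\pi_1 U(\mathbb C)\to\pi_1 Y(\mathbb C)$ induced by $\mathbb C_R(\iota_{U,Y})$ is an isomorphism by Lemma~\ref{lem.qproj.stepc}, $\widetilde U$ is connected and simply connected, i.e.\ a universal covering of $U(\mathbb C)$. Thus $\widetilde Z\hookrightarrow\widetilde{Y(\mathbb C)}$ is a closed embedding of complex codimension $c\ge 1$ between analytic manifolds whose complement $\widetilde U$ and ambient space $\widetilde{Y(\mathbb C)}$ are both simply connected, so Corollary~\ref{cor.qproj.stepd} applies and shows that $\widetilde U_+\to\widetilde{Y(\mathbb C)}_+$ is a weak equivalence in $L_{2c-2}\topspc$, hence a bijection on $\pi_i$ for all $i\le 2c-2$, and in particular for all $i\le 2n$ since $2c-2\ge 2n$.

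Finally I would transport this back along the coverings: the projections $p$ and $p|$ induce isomorphisms $\pi_i\widetilde{Y(\mathbb C)}\cong\pi_i Y(\mathbb C)$ and $\pi_i\widetilde U\cong\pi_i U(\mathbb C)$ for every $i\ge 2$, compatibly with $\mathbb C_R(\iota_{U,Y})$ and the inclusion $\widetilde U\hookrightarrow\widetilde{Y(\mathbb C)}$; hence $\mathbb C_R(\iota_{U,Y})$ is a bijection on $\pi_i$ for $2\le i\le 2n$ as well. Combining this with the $\pi_0$- and $\pi_1$-statements, Proposition~1.5.2 in \cite{MR1944041} then yields that $\mathbb C_R(\iota_{U,Y})$ is a weak equivalence in $L_{2n}\topspc$. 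I expect the only delicate step to be the identification of $\widetilde U$ with a universal covering of $U(\mathbb C)$, which depends on $\pi_1(\mathbb C_R(\iota_{U,Y}))$ being an isomorphism — both injective and surjective — as supplied by Lemma~\ref{lem.qproj.stepc}; the hyperplane-section Lemma~\ref{lem.qproj.conn2} intervenes only indirectly, through the inductive proof of that $\pi_1$-statement.
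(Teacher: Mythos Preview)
Your proof is correct and follows essentially the same route as the paper: reduce via Hirschhorn's Proposition~1.5.2, handle $\pi_0$ by irreducibility and $\pi_1$ by Lemma~\ref{lem.qproj.stepc}, then pass to the universal cover of $Y(\mathbb C)$, identify $p^{-1}(U(\mathbb C))$ as the universal cover of $U(\mathbb C)$ (using that $\pi_1(\mathbb C_R(\iota_{U,Y}))$ is an isomorphism), and apply Corollary~\ref{cor.qproj.stepd}. The only cosmetic difference is that the paper argues connectedness of $p^{-1}(U(\mathbb C))$ via irreducibility of the analytic manifold $\widetilde{Y(\mathbb C)}$, whereas you deduce it from covering-space theory using surjectivity of $\pi_1$; both are valid and the overall architecture is the same.
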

\begin{proof}
By Proposition 1.5.2 in \cite{MR1944041} it suffices to show that 
$\mathbb C _{R}(\iota _{U,Y})$ induces a bijection on the homotopy groups $\pi _{i}$ for
$i\leq 2n$ and for every choice of base point in $U(\mathbb C)_{+}$.  Since the map $\iota _{U,Y}$ preserves the disjoint point $+$,
we can assume that the base point is in $U(\mathbb C)$.  Moreover, $Y$ is irreducible and
$U$ is open in $Y$, so we deduce that $U$ is also irreducible and in particular $U(\mathbb C)$
is path connected.  Hence the result is independent of the choice of base point, and we will omit
any reference to it in the rest of the proof.

If $n=0$, then it is enough to see 
that $\mathbb C _{R}(\iota _{U,Y})$ induces a bijection on $\pi _{0}$.  But this is
clear since $Y$ is irreducible.
	
Now we assume that $n>0$, so $d\geq 2$.  By Lemma \ref{lem.qproj.stepc}, $\mathbb C _{R}(\iota _{U,Y})$ induces
a bijection on the fundamental groups $\pi _{1}$. Hence, it only remains to check that  $\pi_{i}(\mathbb C _{R}(\iota _{U,Y}))$
is an isomorphism for $2\leq i\leq 2n$.

Let $\widetilde{Y(\mathbb C)}$ be the universal covering space of $Y(\mathbb C)$, and consider the following cartesian square of
topological spaces:
	\begin{align*}
		\xymatrix{p_{Y}^{-1}(U(\mathbb C)) \ar[d]_-{p_{U}} \ar[rr]^-{\widetilde{\mathbb C _{R}(\iota _{U,Y})}}&& \widetilde{Y(\mathbb C)}\ar[d]^-{p_{Y}}\\
		U(\mathbb C) \ar[rr]_-{\mathbb C _{R}(\iota _{U,Y})}&& Y(\mathbb C)}
	\end{align*}
Notice that $\widetilde{Y(\mathbb C)}$ has the structure of an analytic manifold 
where $p_{Y}$ is a local analytic isomorphism.  Since $Y(\mathbb C)$
is smooth we deduce that $\widetilde{Y(\mathbb C)}$ is irreducible as an analytic manifold. Thus, $p_{Y}^{-1}(U(\mathbb C))$ is irreducible and in
particular path connected,
since it is the open complement of the closed analytic submanifold $p_{Y}^{-1}(Z(\mathbb C))$ of $\widetilde{Y(\mathbb C)}$, where
$Z=Y\backslash U$.  Hence, $p_{U}$ is a covering map.

We claim that $p_{Y}^{-1}(U(\mathbb C))$ is the universal covering space of $U(\mathbb C)$.  In effect, it suffices to show that
$\pi_{1}(p_{Y}^{-1}(U(\mathbb C)))=\ast$.  Now, we observe that $\pi _{1}(p_{U})$ is injective since $p_{U}$ is a covering; and by
Lemma \ref{lem.qproj.stepc}, $\pi_{1}(\mathbb C _{R}(\iota _{U,Y}))$ is an isomorphism.  Thus, by the commutativity of the diagram
above we conclude that $\pi_{1}(p_{Y})\circ \pi_{1}(\widetilde{\mathbb C _{R}(\iota _{U,Y})})$ is also injective, but this map
is trivial since it factors through $\pi_{1}(\widetilde{Y(\mathbb C)})=\ast$.  Thus, $\pi_{1}(p_{Y}^{-1}(U(\mathbb C)))=\ast$, as we wanted.

Hence, in order to show that $\pi_{i}(\mathbb C _{R}(\iota _{U,Y}))$ is an isomorphism for $2\leq i\leq 2n$, it suffices to show
that $\pi_{i}(\widetilde{\mathbb C _{R}(\iota _{U,Y})})$ is an isomorphism for $2\leq i\leq 2n$.
But this follows directly from Corollary \ref{cor.qproj.stepd} and 
\cite[Prop. 1.5.2]{MR1944041}, since $\widetilde{\mathbb C _{R}(\iota _{U,Y})}$ is an open immersion between
analytic manifolds with smooth closed complement $p_{Y}^{-1}(Z(\mathbb C))$ of complex codimension $\geq n+1$.
\end{proof}

\end{section}
\begin{section}{Some Computations}
		\label{sect-3}

	In this section we study the behavior of unstable and stable slices with respect to the Quillen
	adjunction between the category of pointed simplicial presheaves on $Sm_{X}$ and the category of symmetric
	$T$-spectra on $\mathcal M$:
		\begin{align} 
				\label{inf.susp.adj}
			(\Sigma _{T}^{\infty}, ev_{0},\varphi):\unsmot \rightarrow \TspectraX .
		\end{align}
	We also carry out some computations of unstable slices using 
	a method similar to \cite[\S 4]{Pelaez:2011fk}. 
	
\begin{defi}
		\label{def.uns.nslice.subclass}
	Let $A$ in $\unstablehomotopyX$ be a pointed simplicial presheaf on $Sm_{X}$, and $n\geq 0$ an arbitrary integer.  We will
	say that $A$ is an \emph{$n$-slice} if $A\cong s_{n}A$ in $\unstablehomotopyX$, i.e. the maps $\tau _{\leq n+1}^{A}$ and $i_{n}^{A}$ in
	the following diagram are isomorphisms in $\unstablehomotopyX$
		\[	\xymatrix{&A\ar[d]^-{\tau _{\leq n+1}^{A}}&\\
					s_{n}A \ar[r]_-{i_{n}^{A}}& t_{n+1}A \ar[r]_-{\nu _{n}^{A}}& t_{n}A .}
		\]
\end{defi}	

Recall that the functors, $t_{n}:\unstablehomotopyX \rightarrow \unstablehomotopyX$ were defined 
in Proposition \ref{prop.slices=>homotopy}.

\begin{lem}
		\label{lemma.computingslices-via-birinvs.2}
	Let $n\geq 0$ be an arbitrary integer and
	$f:A\rightarrow B$ be a map in $\unsmot$ which is a weak equivalence in $\nwbiratunsmotX$.
	Then, for every integer $1\leq j \leq n+1$,
	$t_{j}(f)$ is an isomorphism in $\unstablehomotopyX$.
\end{lem}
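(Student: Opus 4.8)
The plan is to deduce the statement directly from Lemma~\ref{lemma.weakequivalences.under.tower} and Lemma~\ref{lem.slices.preserve.wequivs}, using the characterization of $t_{j}$ from Proposition~\ref{prop.slices=>homotopy}\eqref{prop.slices=>homotopy.a}. Recall that for $j\geq 1$ the functor $t_{j}:\unstablehomotopyX \rightarrow \unstablehomotopyX$ is the unique functor with $\mathrm{Ho}\circ \tau _{j}=t_{j}\circ \mathrm{Ho}$; since $\mathrm{Ho}$ is the identity on objects and carries weak equivalences in $\unsmot$ to isomorphisms in $\unstablehomotopyX$, it suffices to show that $\tau _{j}(f)$ is a weak equivalence in $\unsmot$ for every $j$ with $1\leq j\leq n+1$, for then $t_{j}(f)=\mathrm{Ho}(\tau _{j}(f))$ is an isomorphism.

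So first I would fix such a $j$ and observe that $0\leq j-1\leq n$. Applying Lemma~\ref{lemma.weakequivalences.under.tower} with $j-1$ playing the role of its index ``$j$'' (the hypothesis $0\leq j-1\leq n$ is exactly what the range $1\leq j\leq n+1$ provides), the fact that $f$ is a weak equivalence in $\nwbiratunsmotX$ implies that $f$ is also a weak equivalence in $\jminonewbiratunsmotX$.

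Then I would invoke Lemma~\ref{lem.slices.preserve.wequivs}\eqref{lem.slices.preserve.wequivs.a} at level $j$ (which is legitimate since $j\geq 1$): the functor $\tau _{j}$ sends weak equivalences in $\jminonewbiratunsmotX$ to weak equivalences in $\unsmot$. Hence $\tau _{j}(f)$ is a weak equivalence in $\unsmot$, and therefore $t_{j}(f)$ is an isomorphism in $\unstablehomotopyX$ for every $1\leq j\leq n+1$, as required.

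There is no genuine difficulty here: the statement is a formal consequence of the two cited lemmas together with the defining property of the functors $t_{j}$. The only point requiring attention is the index bookkeeping --- one must apply Lemma~\ref{lemma.weakequivalences.under.tower} at the index $j-1$ and Lemma~\ref{lem.slices.preserve.wequivs} at the index $j$, and check that the numerical constraints $0\leq j-1$ and $j-1\leq n$ are precisely those furnished by $1\leq j\leq n+1$.
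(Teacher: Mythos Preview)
Your proof is correct and follows essentially the same approach as the paper's own proof: both use Lemma~\ref{lemma.weakequivalences.under.tower} to pass from $\nwbiratunsmotX$ to $\jminonewbiratunsmotX$, then Lemma~\ref{lem.slices.preserve.wequivs}\eqref{lem.slices.preserve.wequivs.a} to conclude that $\tau_{j}(f)$ is a weak equivalence in $\unsmot$, and finally Proposition~\ref{prop.slices=>homotopy}\eqref{prop.slices=>homotopy.a} to translate this into the desired statement about $t_{j}$. Your write-up simply makes the index bookkeeping more explicit.
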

\begin{proof}
	By Lemma \ref{lemma.weakequivalences.under.tower} we deduce that $f$ is a weak equivalence in 
	$\jminonewbiratunsmotX$.  By Lemma \ref{lem.slices.preserve.wequivs}\eqref{lem.slices.preserve.wequivs.a},
	we conclude that $\tau _{j}(f)$ is a weak equivalence in $\unsmot$.
	Hence, the result follows from Proposition
	\ref{prop.slices=>homotopy}(\ref{prop.slices=>homotopy.a}).
\end{proof}
	
\begin{lem}
		\label{lemma.computingslices-via-birinvs}
	Let $n\geq 1$ be an arbitrary integer and
	$f:A\rightarrow B$ be a map in $\unstablehomotopyX$ such that
	$t_{n+1}(f)$ and $t_{n}(f)$ are isomorphisms in $\unstablehomotopyX$.
	Then the unstable $n$-slice of $f$, $s_{n}(f)$ is also an isomorphism in $\unstablehomotopyX$.
\end{lem}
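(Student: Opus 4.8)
The plan is to reduce the assertion to the comparison theorem for fibre sequences, in exactly the same way as in the proof of Lemma \ref{lem.slices.preserve.wequivs}\eqref{lem.slices.preserve.wequivs.b}. First I would record that the natural transformations $i_{n}\colon\tilde{s}_{n}\rightarrow\tau_{n+1}$ (Proposition \ref{prop.slice=>functor}) and $\nu_{n}\colon\tau_{n+1}\rightarrow\tau_{n}$ (Definition \ref{def.slicetower}) descend to natural transformations $i_{n}\colon s_{n}\rightarrow t_{n+1}$ and $\nu_{n}\colon t_{n+1}\rightarrow t_{n}$ of functors on $\unstablehomotopyX$; this is automatic from Corollary \ref{cor.slices.preserve.wequivs} — all of $\tau_{n}$, $\tau_{n+1}$ and $\tilde{s}_{n}$ send weak equivalences to weak equivalences — together with the universal property of $\mathrm{Ho}$.

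Given $f\colon A\rightarrow B$ in $\unstablehomotopyX$, naturality of $i_{n}$ and $\nu_{n}$ then produces a commutative diagram
	\[	\xymatrix{s_{n}A \ar[r]^-{i_{n}^{A}}\ar[d]_-{s_{n}(f)} & t_{n+1}A \ar[r]^-{\nu_{n}^{A}}\ar[d]_-{t_{n+1}(f)} & t_{n}A\ar[d]^-{t_{n}(f)}\\
	s_{n}B \ar[r]_-{i_{n}^{B}} & t_{n+1}B \ar[r]_-{\nu_{n}^{B}} & t_{n}B}
	\]
in which, by Proposition \ref{slices.fit.fibreseqs}, both rows are fibre sequences in $\unstablehomotopyX$. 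Since $t_{n+1}(f)$ and $t_{n}(f)$ are isomorphisms by hypothesis, the comparison of fibre sequences \cite[I.3 Prop. 5(iii)]{MR0223432} forces $s_{n}(f)$ to be an isomorphism in $\unstablehomotopyX$, which is the claim.

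The argument is entirely formal, so I do not anticipate a genuine obstacle; the only point deserving a moment of care is verifying that the displayed square is honestly a morphism of fibre sequences — equivalently, that $i_{n}$ and $\nu_{n}$ are natural at the level of $\unstablehomotopyX$ — which is precisely what the first paragraph secures. Alternatively, one can bypass the descent discussion altogether by lifting $f$ to a map $A\rightarrow\widehat{B}$ in $\unsmot$, with $\widehat{B}$ a fibrant replacement of $B$ and $A$ already cofibrant, applying the honest natural transformations $\tilde{s}_{n}$, $\tau_{n+1}$, $\tau_{n}$ of $\unsmot$ there, and only then passing to $\unstablehomotopyX$ via Proposition \ref{prop.slices=>homotopy}.
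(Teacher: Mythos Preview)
Your argument is correct and is essentially identical to the paper's proof: both set up the commutative diagram with rows the fibre sequences of Proposition~\ref{slices.fit.fibreseqs} and then invoke \cite[I.3 Prop.~5(iii)]{MR0223432}. The only difference is that you spell out explicitly why $i_{n}$ and $\nu_{n}$ are natural on $\unstablehomotopyX$, a point the paper leaves implicit.
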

\begin{proof}
	It follows from Proposition \ref{slices.fit.fibreseqs}
	that the rows in the following commutative diagram
	are fibre sequences in $\unstablehomotopyX$
		\[	\xymatrix{s_{n}(A) \ar[d]_-{s_{n}(f)} \ar[r] & t_{n+1}(A)\ar[d]^-{t_{n+1}(f)} \ar[r] & t_{n}(A) 
							\ar[d]^-{t_{n}(f)}  \\
							s_{n}(B) \ar[r]  & t_{n+1}(B) \ar[r] & t_{n}(B)}
		\]
	Thus, by \cite[I.3 Prop. 5(iii)]{MR0223432}
	we conclude that $s_{n}(f)$ is also an isomorphism in $\unstablehomotopyX$.
\end{proof}

\begin{thm}
	\label{thm.class.uns.slices}
Let $A$ in $\unstablehomotopyX$ and $n\geq 0$.  Then $A$ is an $n$-slice 
(see Definition \ref{def.uns.nslice.subclass})
if and only if the following conditions hold:
	\begin{enumerate}
		\item \label{thm.class.uns.slices.a}  $t_{j}A\rightarrow \ast$ is an isomorphism in $\unstablehomotopyX$ 
		for every $1\leq j\leq n$,
		\item \label{thm.class.uns.slices.b} the natural map $\tau ^{A}_{\leq n+1}:A\rightarrow t_{n+1}A$
				is an isomorphism in $\unstablehomotopyX$ (see Theorem \ref{thm.slice.tower.derived}).
	\end{enumerate}
\end{thm}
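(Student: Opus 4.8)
The plan is to reduce everything, for $n\geq 1$, to the single equivalence: $A$ is an $n$-slice if and only if $\tau_{\leq n+1}^{A}$ is an isomorphism in $\unstablehomotopyX$ and $t_{n}A\cong\ast$ in $\unstablehomotopyX$. The case $n=0$ is disposed of immediately: there $s_{0}=t_{1}$ and the comparison map $i_{0}^{A}$ is the identity, so ``$A$ is a $0$-slice'' literally says that $\tau_{\leq 1}^{A}$ is an isomorphism, which is \eqref{thm.class.uns.slices.b}, while \eqref{thm.class.uns.slices.a} is vacuous. Granting the displayed equivalence for $n\geq 1$, condition \eqref{thm.class.uns.slices.b} is precisely its first clause, and to finish one must match its second clause with \eqref{thm.class.uns.slices.a}: one implication is trivial, and for the other, if $t_{n}A\cong\ast$ then Corollary \ref{cor.tq.respect.order} gives $t_{j}A\cong t_{j}(t_{n}A)\cong t_{j}(\ast)$ for $1\leq j\leq n$, with $t_{j}(\ast)\cong\ast$ (immediate from the construction of $\tau_{j}$ together with \cite[Thm. 3.2.13(1)]{MR1944041}).

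For the ``$\Leftarrow$'' half of the equivalence I would argue as follows. Assume $t_{n}A\cong\ast$ in $\unstablehomotopyX$ and consider the cartesian square \eqref{def.equation.constr.slice.a} defining $\tilde{s}_{n}A$. In it, $\nu_{n}^{A}$ is a fibration in $\nwbiratunsmotX$, hence in $\unsmot$, and $\tau_{n}A$ is fibrant in $\unsmot$ (both via Proposition \ref{prop.properties.slice.tower}); the base-point map $\ast\to\tau_{n}A$ is therefore a weak equivalence in $\unsmot$, being an isomorphism in $\unstablehomotopyX$ between objects that are fibrant and cofibrant. Since $\unsmot$ is right proper (Theorem \ref{Tspectra-cellular}), the pullback map $i_{n}^{A}\colon\tilde{s}_{n}A\to\tau_{n+1}A$ is then a weak equivalence, i.e. an isomorphism in $\unstablehomotopyX$; together with $\tau_{\leq n+1}^{A}$ being an isomorphism, Definition \ref{def.uns.nslice.subclass} gives that $A$ is an $n$-slice.

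For the ``$\Rightarrow$'' half, assume $A$ is an $n$-slice, so $\tau_{\leq n+1}^{A}$ and $i_{n}^{A}$ are isomorphisms in $\unstablehomotopyX$. By Proposition \ref{slices.fit.fibreseqs} the composite $\nu_{n}^{A}\circ i_{n}^{A}$ is the zero morphism, and since $\tau_{\leq n}^{A}=\nu_{n}^{A}\circ\tau_{\leq n+1}^{A}$ (Definition \ref{def.slicetower}) while $\tau_{\leq n+1}^{A}$ factors through $i_{n}^{A}$, the map $\tau_{\leq n}^{A}\colon A\to\tau_{n}A$ is the zero morphism in $\unstablehomotopyX$, hence also in the homotopy category $\unstablehomotopyX (WB_{n-1})$. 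On the other hand, by Theorem \ref{thm.slice.tower.derived}\eqref{thm.slice.tower.derived.c} the same $\tau_{\leq n}^{A}$ is a weak equivalence in $\nminonewbiratunsmotX$ between the cofibrant object $A$ and $\tau_{n}A$, which is fibrant there (Proposition \ref{prop.properties.slice.tower}), so it becomes an isomorphism in $\unstablehomotopyX (WB_{n-1})$. An isomorphism which is also the zero morphism forces $\tau_{n}A$ to be a zero object of $\unstablehomotopyX (WB_{n-1})$; equivalently $\tau_{n}A\to\ast$ is a weak equivalence in $\nminonewbiratunsmotX$, and since $\tau_{n}A$ and $\ast$ are both fibrant there, \cite[Thm. 3.2.13(1)]{MR1944041} upgrades it to a weak equivalence in $\unsmot$, i.e. $t_{n}A\cong\ast$ in $\unstablehomotopyX$.

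The step I expect to be the main obstacle is exactly this identification of ``$i_{n}^{A}$ is an isomorphism'' with ``$t_{n}A\cong\ast$'': the forward implication relies on right properness of $\unsmot$ so that the strict pullback \eqref{def.equation.constr.slice.a} computes the homotopy fibre, and the reverse on the zero-object argument carried out inside $\unstablehomotopyX (WB_{n-1})$ followed by the descent of weak equivalences between fibrant objects to $\unsmot$ via \cite[Thm. 3.2.13(1)]{MR1944041}. The accompanying nuisance is keeping straight which of the localizations $\zerowbiratunsmotX$, $\nminonewbiratunsmotX$, $\nwbiratunsmotX$ each fibration and each fibrancy statement belongs to, for which Theorems \ref{tower.Quillenfunctors}, \ref{thm.slice.tower.derived} and Proposition \ref{prop.properties.slice.tower} supply the bookkeeping.
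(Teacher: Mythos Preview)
Your proof is correct. The overall shape matches the paper's, and the $\Leftarrow$ direction is essentially identical: the paper simply invokes the fibre sequence of Proposition~\ref{slices.fit.fibreseqs} to get $s_{n}A\cong t_{n+1}A\cong A$, which is what your right-properness argument produces.

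For the $\Rightarrow$ direction both arguments start from the same observation that $\nu_{n}^{A}$ is null-homotopic once $i_{n}^{A}$ is invertible, but you package the consequence differently. The paper stays in $\unstablehomotopyX$ throughout: it applies $t_{j}$ to the factorisation $t_{n+1}A\to\ast\to t_{n}A$, invokes Proposition~\ref{tq.respect.order} to see that $t_{j}(\nu_{n}^{A})$ is an isomorphism, and concludes that $t_{j}A\cong t_{j}(t_{n}A)$ is a retract of $\ast$ for every $1\leq j\leq n$ simultaneously. You instead compose with $\tau_{\leq n+1}^{A}$ to see that $\tau_{\leq n}^{A}$ is null, pass to $\unstablehomotopyX(WB_{n-1})$ where this map is also an isomorphism, deduce $\tau_{n}A\cong\ast$ there, and then use fibrancy plus \cite[Thm.~3.2.13(1)]{MR1944041} to upgrade to $\unstablehomotopyX$; only afterwards do you cascade down to $j<n$ via Corollary~\ref{cor.tq.respect.order}. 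Your route is a legitimate variant and the zero-isomorphism argument is the same in both, but the paper's version avoids the detour through the localised homotopy category and the upgrade lemma, handling all $j$ in one stroke.
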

\begin{proof}
($\Rightarrow$):  Assume that $A$ is an $n$-slice.    Thus, the maps $\tau _{\leq n+1}^{A}$, $i_{n}^{A}$ in the diagram below
are isomorphisms in $\unstablehomotopyX$ (see Definition \ref{def.uns.nslice.subclass}).  Thus, 	the map $\nu _{n}^{A}$ is null homotopic in 
$\unstablehomotopyX$ (i.e. it factors through $\ast$), since the horizontal row in the following diagram
is a fibre sequence in  $\unstablehomotopyX$ by Proposition \ref{slices.fit.fibreseqs}:
	\[	\xymatrix{&A\ar[d]^-{\tau _{\leq n+1}^{A}}&\\
					s_{n}A \ar[r]_-{i_{n}^{A}}& t_{n+1}A \ar[r]_-{\nu _{n}^{A}}& t_{n}A .}
	\]
Hence, the following diagram in $\unstablehomotopyX$ commutes
	\[ \xymatrix{t_{n+1}A \ar[r]^-{\nu _{n}^{A}} \ar[dr]& t_{n}A \\
					& \ast \ar[u]}
	\]
Applying the functor $t_{j}$, we obtain the following commutative diagram in $\unstablehomotopyX$
	\[ \xymatrix{t_{j}A\cong t_{j}(t_{n+1}A) \ar[rr]^-{t_{j}(\nu _{n}^{A})} \ar[drr]&& t_{j}(t_{n}A)\cong t_{j}A \\
					&& t_{j}(\ast)=\ast \ar[u]}
	\]
where $t_{j}A\cong t_{j}(t_{n+1}A)$ and $t_{j}(t_{n}A)\cong t_{j}A$ are isomorphic in $\unstablehomotopyX$ 
by Corollary \ref{cor.tq.respect.order}.  Now, Proposition \ref{tq.respect.order} implies that $t_{j}(\nu _{n}^{A})$ 
is an isomorphism in $\unstablehomotopyX$ since $n\geq j$.  
Thus, we conclude that $t_{j}A$ is a retract of $\ast$
in $\unstablehomotopyX$, so $t_{j}A\cong \ast$ in $\unstablehomotopyX$.  

($\Leftarrow$):  Assume that \eqref{thm.class.uns.slices.a} and \eqref{thm.class.uns.slices.b} hold.  Then,
by Proposition \ref{slices.fit.fibreseqs} we deduce that $s_{n}A\cong t_{n+1}A\cong A$ in $\unstablehomotopyX$ .
\end{proof}

Now, we will study the relation between the stable slice filtration (see Theorems
\ref{thm.modelstructures-slicefiltration},
\ref{thm.orthogonal=wbirationalTspectra}) and its unstable analogue (see Theorem
\ref{thm.slice.tower.derived}).    Consider the infinite suspension of the set of maps described
in Definition  \ref{def.localizing-maps2}:
	\begin{align}
			\label{infsusp.nwbirat}
		\infsusp WB_{n}=\{ \infsusp & \iota _{U,Y} \; |\; \iota _{U,Y}: U_{+}\rightarrow Y_{+} \text{ open immersion;} \\
									& Y, Z=Y\backslash U \in Sm_{X}; Y \text{ irreducible}; (codim_{Y}Z)\geq n+1 \} \nonumber
	\end{align}

\begin{prop}
	\label{prop.slice.infsusp.bhv}
Let $n\geq 0$ be an arbitrary integer.  In the following commutative diagram, all the arrows are left Quillen functors:
	\begin{align*}
		\xymatrix{\unsmot \ar[r]^-{\infsusp} \ar[d]_-{id} & \TspectraX \ar[d]^-{id}\\
					\nwbiratunsmotX \ar[r]_-{\infsusp} & \nwbiratTspectraX }
	\end{align*}
\end{prop}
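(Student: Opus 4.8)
The plan is to check the four arrows of the square one at a time, obtaining three of them straight from the definitions and the fourth from the universal property of left Bousfield localization.

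First I would note that the top horizontal arrow $\infsusp = F_{0}\colon \unsmot \to \TspectraX$ is the left adjoint of the Quillen adjunction \eqref{inf.susp.adj}, so it is left Quillen with nothing to prove. The left vertical arrow $id\colon \unsmot \to \nwbiratunsmotX$ is left Quillen because, by Definition \ref{def.localizationAmod-weaklyBirat} and Theorem \ref{tower.Quillenfunctors}\eqref{tower.Quillenfunctors.b}, the category $\nwbiratunsmotX$ is the left Bousfield localization of $\unsmot$ with respect to $WB_{n}$; and the right vertical arrow $id\colon \TspectraX \to \nwbiratTspectraX$ is left Quillen for the same reason, $\nwbiratTspectraX$ being by construction the left Bousfield localization of $\TspectraX$ at the set of maps $sWB_{n}$ of \eqref{infsusp.nwbirat.locmaps}. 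Commutativity of the square is immediate: the identity functors act trivially on objects and morphisms, so both composites of the square are literally the functor $\infsusp$.

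The only step with actual content is showing that the bottom arrow $\infsusp\colon \nwbiratunsmotX \to \nwbiratTspectraX$ is left Quillen. Here I would use the universal property of the left Bousfield localization $\nwbiratunsmotX$ of $\unsmot$: the composite left Quillen functor $\unsmot \xrightarrow{\infsusp} \TspectraX \xrightarrow{id} \nwbiratTspectraX$ factors through a left Quillen functor out of $\nwbiratunsmotX$ as soon as it sends every map of $WB_{n}$ to a weak equivalence in $\nwbiratTspectraX$ (see \cite[3.3.19.(1) and 3.1.1.(1)]{MR1944041}). Since the source and target of any $\iota_{U,Y}\in WB_{n}$ are already cofibrant in $\unsmot$ (we are using the injective model structure), there is no need to pass to derived functors; it is enough to see that $\infsusp \iota_{U,Y}$ is a weak equivalence in $\nwbiratTspectraX$. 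But $\infsusp \iota_{U,Y} = F_{0}(\iota_{U,Y}) = F_{0}(\Gm^{0}\wedge \iota_{U,Y})$, and this belongs to the localizing set $sWB_{n}$: in the description \eqref{infsusp.nwbirat.locmaps} one takes $b = p = 0$ and $r = n$, so the numerical condition $r - p + b \ge n$ becomes $n \ge n$. Hence $\infsusp \iota_{U,Y}$ is a weak equivalence in $\nwbiratTspectraX$ by construction of that localization, and the universal property produces the bottom arrow.

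I expect the only (mild) obstacle to be the bookkeeping in this last step, namely recognising $\infsusp(WB_{n})$ as a subset of the generating set $sWB_{n}$ on the level of $T$-spectra; once one writes $F_{0}(\iota_{U,Y}) = F_{0}(\Gm^{0}\wedge \iota_{U,Y})$ with $b = p = 0$ and $r = n$ this inclusion is transparent, and the rest of the argument is purely formal.
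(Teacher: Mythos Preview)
Your proof is correct and follows essentially the same route as the paper's own argument: three arrows are handled by the definitions of $\infsusp$ and of left Bousfield localization, and the bottom arrow is obtained from the universal property of $\nwbiratunsmotX$ by checking that $\infsusp(WB_{n})\subseteq sWB_{n}$. Your explicit bookkeeping ($b=p=0$, $r=n$) is exactly the observation the paper makes when it says that the maps in \eqref{infsusp.nwbirat} are contained in \eqref{infsusp.nwbirat.locmaps}; your remark on cofibrancy of $U_{+},Y_{+}$ is a welcome clarification but not a departure from the paper's approach.
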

\begin{proof}
It is clear that the diagram commutes and that the top horizontal arrow is a left Quillen functor.
The vertical arrows are left Quillen functors since by construction 
$\nwbiratunsmotX$ (resp. $\nwbiratTspectraX$) is a left Bousfield localization of $\unsmot$ (resp. $\TspectraX$).  

Now, we observe that all the maps in \eqref{infsusp.nwbirat} are contained in \eqref{infsusp.nwbirat.locmaps}.  Hence, we deduce that
all the maps in \eqref{infsusp.nwbirat} are weak equivalences in $\nwbiratTspectraX$, since by construction it is the left Bousfield
localization of $\TspectraX$ with respect to the maps in \eqref{infsusp.nwbirat.locmaps}.
Finally, by the universal property of left Bousfield localizations 
(see \cite[3.3.19.(1) and 3.1.1.(1)]{MR1944041}) we conclude that the bottom horizontal arrow is also a left Quillen functor.
\end{proof}

We will write $\nwbiratstablehomotopy$ for the homotopy category of $\nwbiratTspectraX$, and
	\begin{align*} 
		(\Sigma _{T}^{\infty}, \Omega _{T}^{\infty},\varphi):\unstablehomotopyX 
			\rightarrow \stablehomotopyX
	\end{align*}
for the derived adjunction of \eqref{inf.susp.adj}.

Let $E\in \TspectraX$ be an arbitrary symmetric $T$-spectrum, and $n\geq 0$ an arbitrary integer.  Consider the functors
$s_{n}:\stablehomotopyX \rightarrow \stablehomotopyX$, $s_{<n+1}:\stablehomotopyX \rightarrow \stablehomotopyX$ as defined in
Theorem \ref{thm.modelstructures-slicefiltration}.

\begin{prop}
	\label{slice.orth.fibran.del}
$\infdel s_{n}E$ and
$\infdel s_{<n+1}E$  are fibrant in $\nwbiratunsmotX$.
\end{prop}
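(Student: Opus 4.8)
The plan is to reduce the statement to a characterization of fibrancy in the localized category $\nwbiratunsmotX$, namely the analogue of Proposition~\ref{prop.charac.fibrants2}: a space is fibrant in $\nwbiratunsmotX$ precisely when it is motivically fibrant and, for every $a\geq 0$ and every $\iota_{U,Y}\in WB_n$, the map $\iota_{U,Y}^{\ast}:\Hom_{\unstablehomotopyX}(S^a\wedge Y_+,-)\to\Hom_{\unstablehomotopyX}(S^a\wedge U_+,-)$ is a bijection. So I first need to know that $\infdel s_n E$ and $\infdel s_{<n+1}E$ are motivically fibrant; this is automatic since $\infdel$ is the derived right adjoint of $\infsusp$, and one may take a model for it landing in fibrant simplicial presheaves (equivalently, apply a fibrant replacement). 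The real content is the second condition, which must be checked after smashing with arbitrary $S^a$.

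The key step is to transport the problem to the stable side via the adjunction $(\infsusp,\infdel,\varphi):\unstablehomotopyX\rightarrow\stablehomotopyX$. For any $a\geq 0$ and any $\iota_{U,Y}\in WB_n$ we have natural isomorphisms
\[
\Hom_{\unstablehomotopyX}(S^a\wedge Y_+,\infdel s_n E)\cong\Hom_{\stablehomotopyX}(\infsusp(S^a\wedge Y_+),s_nE)=\Hom_{\stablehomotopyX}(F_0(S^a\wedge Y_+),s_nE),
\]
and similarly with $U$ in place of $Y$ and with $s_{<n+1}E$ in place of $s_nE$. Thus it suffices to show that the map
\[
(\infsusp\iota_{U,Y})^{\ast}:\Hom_{\stablehomotopyX}(F_0(S^a\wedge Y_+),s_nE)\to\Hom_{\stablehomotopyX}(F_0(S^a\wedge U_+),s_nE)
\]
is an isomorphism, and likewise for $s_{<n+1}E$. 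Now $s_nE$ and $s_{<n+1}E$ are objects of the homotopy category $\stablehomotopyX(WB_n)$ of $\nwbiratTspectraX$: by Theorem~\ref{thm.orthogonal=wbirationalTspectra} this category is equivalent to $\nplusoneorthogonalTspectraX$, i.e.\ to $(n+1)$-orthogonal symmetric $T$-spectra, and both $s_nE$ (the $n$-slice) and $s_{<n+1}E$ (the $(n+1)$-orthogonal component) lie in $\qorthogonalstablehomotopyX[q=n+1]$ by the very definitions recalled after~\eqref{eq.def.slices} — indeed $\Hom_{\stablehomotopyX}(K,s_{<n+1}E)=\Hom_{\stablehomotopyX}(K,s_nE)=0$ for all $K\in\Sigma_T^{n+1}\stablehomotopyX^{\mathit{eff}}$. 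Hence $s_nE$ and $s_{<n+1}E$ are $WB_n$-local as objects of $\stablehomotopyX$, and since $\infsusp\iota_{U,Y}=F_0(\iota_{U,Y})\in\infsusp WB_n\subseteq sWB_n$ is precisely one of the maps inverted in forming $\nwbiratTspectraX$ (compare~\eqref{infsusp.nwbirat} and~\eqref{infsusp.nwbirat.locmaps}, taking $b=p=0$, $r=n$), the map $(\infsusp\iota_{U,Y})^{\ast}$ is an isomorphism; smashing with $S^a$ first only replaces $F_0(\iota_{U,Y})$ by $F_0(S^a\wedge\iota_{U,Y})$, which is again of the required form (or follows by the simplicial-mapping-space argument of Ken Brown's lemma as in Lemma~\ref{lemma.generators.becomeequiv.1}).

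Assembling: the two conditions of the analogue of Proposition~\ref{prop.charac.fibrants2} hold for $\infdel s_nE$ and $\infdel s_{<n+1}E$, hence these spaces are fibrant in $\nwbiratunsmotX$ provided one knows the \emph{converse} of that proposition — that the two listed conditions are \emph{sufficient} for fibrancy in the localization. This last point is the main obstacle: Proposition~\ref{prop.charac.fibrants2} as stated only gives necessity. The cleanest route around it is to observe that $\nwbiratunsmotX$ is the left Bousfield localization of $\unsmot$ at the set $WB_n$, so by Hirschhorn's theory \cite[3.4.1, 3.2.13]{MR1944041} a motivically fibrant object is $\nwbiratunsmotX$-fibrant iff it is $WB_n$-local, i.e.\ iff $\Hom_{\unstablehomotopyX}(B,-)\to\Hom_{\unstablehomotopyX}(A,-)$ is a bijection for every $\iota_{A,B}=S^a\wedge\iota_{U,Y}$ in (the simplicial closure of) $WB_n$ — and by the above this holds for $\infdel s_nE$ and $\infdel s_{<n+1}E$. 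I would phrase the final write-up by invoking Proposition~\ref{prop.slice.infsusp.bhv} together with the derived adjunction and Theorem~\ref{thm.orthogonal=wbirationalTspectra} to conclude that $\infdel s_nE$ and $\infdel s_{<n+1}E$ are $\infsusp WB_n$-local, hence $WB_n$-local, hence fibrant in $\nwbiratunsmotX$.
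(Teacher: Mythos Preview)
Your argument is correct, but it unwinds by hand what the paper dispatches in one line via abstract nonsense. The paper's proof is simply: by Theorems~\ref{thm.modelstructures-slicefiltration}(\ref{thm.modelstructures-slicefiltration.b})--(\ref{thm.modelstructures-slicefiltration.c}) and~\ref{thm.orthogonal=wbirationalTspectra}, one may choose models for $s_{<n+1}E$ and $s_{n}E$ that are fibrant in $\nwbiratTspectraX$; then Proposition~\ref{prop.slice.infsusp.bhv} says that $\infsusp:\nwbiratunsmotX\to\nwbiratTspectraX$ is left Quillen, so its right adjoint $ev_{0}$ is right Quillen and therefore carries these fibrant spectra to objects fibrant in $\nwbiratunsmotX$. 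No explicit locality check, no appeal to the converse of Proposition~\ref{prop.charac.fibrants2}, is needed.

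Your route---verify $WB_{n}$-locality of $\infdel s_{n}E$ and $\infdel s_{<n+1}E$ directly via the $(\infsusp,\infdel)$ adjunction and the $(n{+}1)$-orthogonality of $s_{n}E$, $s_{<n+1}E$---amounts to reproving the special case of ``right Quillen functors preserve fibrants'' that you need. It works, and your handling of the $S^{a}$-suspension issue via Ken Brown is fine, but it forces you to confront the converse of Proposition~\ref{prop.charac.fibrants2}, which you correctly identify as requiring Hirschhorn's characterization of local objects. Your closing sentence (``invoke Proposition~\ref{prop.slice.infsusp.bhv} together with the derived adjunction\ldots'') is almost the paper's argument, but you still phrase the conclusion as ``$WB_{n}$-local, hence fibrant'' rather than the cleaner ``image of a fibrant object under a right Quillen functor, hence fibrant.'' The latter is what buys the paper its two-line proof.
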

\begin{proof}
By Theorems \ref{thm.modelstructures-slicefiltration}\eqref{thm.modelstructures-slicefiltration.b}, 
\ref{thm.orthogonal=wbirationalTspectra} (resp.\,\ref{thm.modelstructures-slicefiltration}\eqref{thm.modelstructures-slicefiltration.b}-\eqref{thm.modelstructures-slicefiltration.c}, 
\ref{thm.orthogonal=wbirationalTspectra})
we can assume that $\infdel s_{<n+1}E$ (resp.\,$\infdel s_{n}E$) is fibrant in $\nwbiratTspectraX$.
Hence, the result follows from Proposition \ref{prop.slice.infsusp.bhv}.
\end{proof}

On the other hand, combining \eqref{orth.dist.triang} with Theorems
\ref{thm.modelstructures-slicefiltration}\eqref{thm.modelstructures-slicefiltration.b}, 
\ref{thm.orthogonal=wbirationalTspectra} we deduce that:
	\begin{align*}
	\xymatrix{f_{n+1}E \ar[r]& E \ar[r]& s_{<n+1}E}
	\end{align*}
is a fibre sequence (and a distinguished triangle) in $\nwbiratstablehomotopy$ and $\stablehomotopyX$.
Similarly, combining \eqref{eq.def.slices} with Theorems \ref{thm.modelstructures-slicefiltration}\eqref{thm.modelstructures-slicefiltration.b}-\eqref{thm.modelstructures-slicefiltration.c}, 
\ref{thm.orthogonal=wbirationalTspectra} we deduce that:
	\begin{align*}
	\xymatrix{s_{n}E \ar[r]& s_{<n+1}E \ar[r]& s_{<n}E }
	\end{align*}
is a fibre sequence (and a distinguished triangle) in $\nwbiratstablehomotopy$ and $\stablehomotopyX$.
Therefore, by Proposition \ref{prop.slice.infsusp.bhv} we conclude:

\begin{prop}
	\label{prop.inf.susp.comp.fibr}
Let $E\in \TspectraX$ be an arbitrary symmetric $T$-spectrum, and $n\geq 0$ an arbitrary integer.
Then the following are fibre sequences in $\homotnwbiratunsmotX$ and $\unstablehomotopyX$:
	\begin{align*}
		\infdel f_{n+1}E \rightarrow \infdel E \rightarrow \infdel s_{<n+1}E &\\
		\infdel s_{n}E \rightarrow \infdel s_{<n+1}E \rightarrow \infdel s_{<n}E & \; .
	\end{align*}
\end{prop}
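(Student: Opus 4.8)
The plan is to deduce both fibre sequences by applying the right adjoint $ev_{0}$ of the adjunction \eqref{inf.susp.adj} to the fibre sequences recorded just before the statement, using it both in its original form $ev_{0}\colon \TspectraX \to \unsmot$ and in the localized form $ev_{0}\colon \nwbiratTspectraX \to \nwbiratunsmotX$ supplied by Proposition \ref{prop.slice.infsusp.bhv}. The functor $ev_{0}\colon \TspectraX \to \unsmot$ is right Quillen by \eqref{inf.susp.adj}, and $ev_{0}\colon \nwbiratTspectraX \to \nwbiratunsmotX$ is right Quillen by Proposition \ref{prop.slice.infsusp.bhv}. Since a right Quillen functor preserves fibrant objects and fibrations between them and, being a right adjoint, preserves the terminal object and pullbacks, its total right derived functor carries homotopy fibre sequences to homotopy fibre sequences. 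I will write $\infdel$ for the total right derived functor of $ev_{0}\colon \TspectraX \to \unsmot$ and $\mathbb{R}\,ev_{0}$ for that of $ev_{0}\colon \nwbiratTspectraX \to \nwbiratunsmotX$.

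For the statement in $\unstablehomotopyX$ I would argue directly. As recalled before the statement (combining \eqref{orth.dist.triang} with Theorems \ref{thm.modelstructures-slicefiltration}\eqref{thm.modelstructures-slicefiltration.b} and \ref{thm.orthogonal=wbirationalTspectra}, respectively \eqref{eq.def.slices} with Theorems \ref{thm.modelstructures-slicefiltration}\eqref{thm.modelstructures-slicefiltration.b}--\eqref{thm.modelstructures-slicefiltration.c} and \ref{thm.orthogonal=wbirationalTspectra}), the sequences $f_{n+1}E \to E \to s_{<n+1}E$ and $s_{n}E \to s_{<n+1}E \to s_{<n}E$ are fibre sequences in $\stablehomotopyX$. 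Applying $\infdel$, which preserves homotopy fibre sequences, then yields the two asserted fibre sequences in $\unstablehomotopyX$.

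For the statement in $\homotnwbiratunsmotX$ I would start from the fact, from the same references, that those two sequences are fibre sequences in $\nwbiratstablehomotopy$ as well, and apply $\mathbb{R}\,ev_{0}$, which by the discussion above produces homotopy fibre sequences in $\homotnwbiratunsmotX$. What then remains is to check that the terms of these sequences are the objects $\infdel f_{n+1}E$, $\infdel E$, $\infdel s_{<n+1}E$ (respectively $\infdel s_{n}E$, $\infdel s_{<n+1}E$, $\infdel s_{<n}E$): for $s_{<n}E$, $s_{<n+1}E$ and $s_{n}E$ this is the point already used in Proposition \ref{slice.orth.fibran.del}, namely that by Theorems \ref{thm.modelstructures-slicefiltration}\eqref{thm.modelstructures-slicefiltration.b}--\eqref{thm.modelstructures-slicefiltration.c} and \ref{thm.orthogonal=wbirationalTspectra} these objects admit representatives already fibrant in $\nwbiratTspectraX$, so that $\mathbb{R}\,ev_{0}$ computes the same as $\infdel$ on them; for $E$ and $f_{n+1}E$ the remaining terms are pinned down as homotopy fibres of fixed maps, and matching them up amounts to the commutativity up to canonical natural isomorphism of the square of total right derived functors obtained by passing to right adjoints in the square of left Quillen functors of Proposition \ref{prop.slice.infsusp.bhv}. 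I expect this last bookkeeping --- confirming that the sequences produced by $\mathbb{R}\,ev_{0}$ really are the claimed ones rather than merely isomorphic homotopy fibre sequences --- to be the only delicate step; everything else is formal, following from the right Quillen property of $ev_{0}$ in the two model structures together with Propositions \ref{prop.slice.infsusp.bhv} and \ref{slice.orth.fibran.del}.
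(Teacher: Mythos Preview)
Your proposal is correct and follows essentially the same approach as the paper: the paper's entire argument is the phrase ``Therefore, by Proposition~\ref{prop.slice.infsusp.bhv} we conclude,'' i.e.\ exactly your plan of applying the right Quillen functor $ev_{0}$ in the two model structures to the fibre sequences already established in $\stablehomotopyX$ and $\nwbiratstablehomotopy$. You are simply more explicit than the paper in unpacking this, and you correctly isolate the one nontrivial point---identifying the terms $\mathbb{R}\,ev_{0}$ produces with the $\infdel(-)$ appearing in the statement---which the paper leaves implicit; note that for the second sequence this identification is immediate since all three objects $s_{n}E$, $s_{<n+1}E$, $s_{<n}E$ are fibrant in $\nwbiratTspectraX$, so no further bookkeeping is needed there.
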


By looking at Propositions \ref{slice.orth.fibran.del} and \ref{prop.inf.susp.comp.fibr}, it is natural to ask
if $\infdel s_{n}E$ is an unstable $n$-slice (see Definition \ref{def.uns.nslice.subclass}).  If $n=0$,
then by Theorem \ref{thm.class.uns.slices} we conclude that $\infdel s_{0}E$ is an unstable zero-slice.
However, if $n\geq 1$ then $\infdel s_{n}E$ is not an unstable $n$-slice in general.

\begin{exam}
	\label{infdel.not.comp.slice}
Let the base scheme $X=\mathrm{Spec}\; k$, with $k$ a perfect field and $\ell \geq 3$ a prime different from
the characteristic of $k$.  Consider the spectrum
$\mathbf H _{\ell}$ which  represents in $\stablehomotopyX$ motivic cohomology 
with $\mathbb Z / \ell$-coefficients \cite[\S 6.1]{MR1648048}.  Then, $\infdel s_{\ell -1}
(S^{2-\ell}\wedge \Gm ^{\ell -1}\wedge \mathbf H _{\ell})$
is not an unstable $\ell -1$-slice.

In effect,
Guillou and Weibel construct a non-trivial
map in $\unstablehomotopyX$ (taking r=1 in \cite[Ex. 8.1]{GuiWei} and using the grading for motivic cohomology
introduced in \cite[\S 6, p. 595]{MR1648048}):
	\begin{align*}
		v: \infdel (S^{2-\ell}\wedge \Gm ^{\ell -1}\wedge \mathbf H _{\ell})
		\rightarrow \infdel (\Gm \wedge \mathbf H _{\ell})
	\end{align*}
On the other hand, by the work of Levine \cite[Lem. 10.4.1]{MR2365658} $s_{0}\mathbf H _{\ell}\cong 
\mathbf H _{\ell}$ in $\stablehomotopyX$.  Thus, we conclude that in $\stablehomotopyX$:
	\begin{align*}
		s_{\ell -1}(S^{2-\ell}\wedge \Gm ^{\ell -1}\wedge \mathbf H _{\ell}) & \cong 
			S^{2-\ell}\wedge \Gm ^{\ell -1}\wedge \mathbf H _{\ell}\\
		s_{1}(\Gm \wedge \mathbf H _{\ell}) & \cong \Gm \wedge \mathbf H _{\ell}.
	\end{align*}
Now, by Theorem
\ref{thm.slice.tower.derived}\eqref{thm.slice.tower.derived.c}:
	\begin{align*}
		\tau _{\leq 2}^{\infdel s_{\ell -1}(S^{2-\ell}\wedge \Gm ^{\ell -1}\wedge \mathbf H _{\ell})}:
		\infdel s_{\ell -1}(S^{2-\ell}\wedge \Gm ^{\ell -1}\wedge \mathbf H _{\ell})\rightarrow
		t_{2}(\infdel s_{\ell -1}(S^{2-\ell}\wedge \Gm ^{\ell -1}\wedge \mathbf H _{\ell}))
	\end{align*}
is a weak equivalence in $\onewbiratunsmotX$.  Combining
the adjunctions in Theorem \ref{tower.Quillenfunctors}\eqref{tower.Quillenfunctors.b} with the
fact that $\infdel s_{1}(\Gm \wedge \mathbf H _{\ell})\cong \infdel (\Gm \wedge \mathbf H _{\ell})$ is
fibrant in $\onewbiratunsmotX$ (by Proposition \ref{slice.orth.fibran.del}) and the non-triviality of $v$,
we deduce that 
	\begin{align*}
		t_{2}(\infdel s_{\ell -1}(S^{2-\ell}\wedge \Gm ^{\ell -1}\wedge \mathbf H _{\ell}))\rightarrow \ast
	\end{align*}
is not an isomorphism in $\unstablehomotopyX$.  Hence, by Theorem
\ref{thm.class.uns.slices}\eqref{thm.class.uns.slices.a} it follows that
$\infdel s_{\ell -1}
(S^{2-\ell}\wedge \Gm ^{\ell -1}\wedge \mathbf H _{\ell})$
is not an unstable $\ell -1$-slice.
\end{exam}

\begin{rmk}
	\label{uns.slic.finer}
From Example \ref{infdel.not.comp.slice} we observe that the unstable slices are non-trivial
and finer invariants than the corresponding stable slices.
\end{rmk}

Now we proceed to describe some computations of unstable slices. 

\begin{rmk}[see Remark \ref{rmk.evenPosttow.alggeom}]
	\label{rmk.conn.spheres}
The following result is an unstable motivic analogue of the standard result in topology:
$\pi _{i}(S^{2n})=\ast$ for $i\leq 2n-1$, i.e. the topological connectivity of the spheres.
\end{rmk}

We will write $T^{r}$ for $S^{r}\wedge \Gm ^{r}$, where $r\geq 0$ is an integer.  

\begin{thm}
		\label{thm.vanishing.effective.cats}
	Let $r\geq 1$; $1\leq m\leq r$; $0\leq n\leq r-1$ be arbitrary integers.  Then:
		\begin{enumerate}
			\item	\label{thm.vanishing.effective.cats.a}  For every $Z\in Sm_{X}$, the natural map 
					$t_{m}(T^{r}\wedge Z_{+})\rightarrow \ast$ is an isomorphism
					in $\unstablehomotopyX$.
			\item	\label{thm.vanishing.effective.cats.b}  For every $Z\in Sm_{X}$, the natural map 
					$s_{n}(T^{r}\wedge Z_{+})\rightarrow \ast$ is an isomorphism
					in $\unstablehomotopyX$.
		\end{enumerate}
\end{thm}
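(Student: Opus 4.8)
The plan is to translate both assertions into the language of Section~\ref{sect-1} and reduce them to a single geometric observation: the open immersion $\mathbb A^{r}_{X}\setminus 0\to\mathbb A^{r}_{X}$ has smooth closed complement (the zero section) of codimension exactly $r$. Since $t_{m}=\mathrm{Ho}\circ\tau_{m}$ by Proposition~\ref{prop.slices=>homotopy}, since $\tau_{m}$ carries weak equivalences in $\nwbiratunsmotX$ (taken with $n=m-1$) to weak equivalences in $\unsmot$ by Lemma~\ref{lem.slices.preserve.wequivs}\eqref{lem.slices.preserve.wequivs.a}, and since $\tau_{m}$ sends the terminal object to an object weakly equivalent to $\ast$, statement~\eqref{thm.vanishing.effective.cats.a} will follow once I prove: for every $0\le n\le r-1$, the canonical map $T^{r}\wedge Z_{+}\to\ast$ is a weak equivalence in $\nwbiratunsmotX$ (note that the range $1\le m\le r$ corresponds exactly to $0\le m-1\le r-1$).

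To prove this I would first invoke the standard motivic weak equivalence $T^{r}\cong\mathbb A^{r}_{X}/(\mathbb A^{r}_{X}\setminus 0)$ in $\unstablehomotopyX$ (homotopy purity for $0\hookrightarrow\mathbb A^{r}_{X}$, whose normal bundle is trivial of rank $r$; cf.~\cite{MR1813224}). Smashing with the cofibrant object $Z_{+}$ and using that $\unsmot$ is a monoidal model category, one obtains $T^{r}\wedge Z_{+}\cong(\mathbb A^{r}_{X}\times_{X}Z)/((\mathbb A^{r}_{X}\setminus 0)\times_{X}Z)$ in $\unstablehomotopyX$, the right-hand side being the (homotopy) cofibre of the monomorphism $j\colon((\mathbb A^{r}_{X}\setminus 0)\times_{X}Z)_{+}\to(\mathbb A^{r}_{X}\times_{X}Z)_{+}$. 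When $\mathbb A^{r}_{X}\times_{X}Z=\mathbb A^{r}_{Z}$ is irreducible, $j$ is induced by an open immersion whose source and (irreducible) total space lie in $Sm_{X}$ and whose closed complement is the zero section $\cong Z\in Sm_{X}$, smooth of codimension $r$; hence $j\in WB_{n}$ for every $0\le n\le r-1$, so $j$ is simultaneously a cofibration and a weak equivalence in $\nwbiratunsmotX$. Pushing $j$ out along $((\mathbb A^{r}_{X}\setminus 0)\times_{X}Z)_{+}\to\ast$ then exhibits $\ast\to T^{r}\wedge Z_{+}$ as a trivial cofibration in $\nwbiratunsmotX$, which is precisely what is needed; part~\eqref{thm.vanishing.effective.cats.a} follows.

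Part~\eqref{thm.vanishing.effective.cats.b} is then immediate from~\eqref{thm.vanishing.effective.cats.a}. For $n=0$ one has $s_{0}=t_{1}$ and $1\le 1\le r$, so $s_{0}(T^{r}\wedge Z_{+})\cong\ast$. For $1\le n\le r-1$, Proposition~\ref{slices.fit.fibreseqs} supplies a homotopy fibre sequence $s_{n}(T^{r}\wedge Z_{+})\to t_{n+1}(T^{r}\wedge Z_{+})\to t_{n}(T^{r}\wedge Z_{+})$ in $\unstablehomotopyX$; since $1\le n<n+1\le r$, part~\eqref{thm.vanishing.effective.cats.a} makes the two right-hand terms contractible, hence so is the fibre, i.e. $s_{n}(T^{r}\wedge Z_{+})\cong\ast$.

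The step I expect to require the most care is the passage from the case ``$\mathbb A^{r}_{Z}$ irreducible'' to an arbitrary $Z\in Sm_{X}$ (and, relatedly, to an arbitrary base $X$), since only in the irreducible case does $j$ lie literally in the localizing set $WB_{n}$. I would first reduce to $Z$ connected via the wedge splitting $T^{r}\wedge Z_{+}=\bigvee_{i}T^{r}\wedge(Z_{i})_{+}$ over the connected components $Z_{i}$ of $Z$ (coproducts of weak equivalences between cofibrant objects are weak equivalences). The remaining reduction to the irreducible case I would carry out by Noetherian induction on $\kdim Z$ together with Zariski descent, stratifying by the closed locus along which distinct irreducible components meet, exactly as in \cite[\S4]{Pelaez:2011fk}; this is purely a bookkeeping matter, the homotopy-theoretic content being entirely contained in the irreducible case treated above.
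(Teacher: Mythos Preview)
Your proof is correct and follows essentially the same route as the paper: identify $T^{r}\wedge Z_{+}$ with the cofibre of the open immersion $(\mathbb A^{r}_{Z}\setminus Z)_{+}\to(\mathbb A^{r}_{Z})_{+}$ (the paper cites \cite[Prop.~2.17(2)]{MR1813224} directly for this over $Z$, rather than smashing the case $Z=X$ with $Z_{+}$), observe that this immersion has smooth closed complement of codimension $r$ and hence becomes a weak equivalence in $\rminonewbiratunsmotX$, and conclude via Lemma~\ref{lemma.computingslices-via-birinvs.2}. Your last paragraph is over-cautious: once you have reduced to $Z$ connected via the wedge splitting, you are already done, since a connected object of $Sm_{X}$ is automatically irreducible (the local rings of a smooth scheme are regular, hence integral domains), so $\mathbb A^{r}_{Z}$ is irreducible and $j\in WB_{r-1}$ on the nose; no Noetherian induction or Zariski descent is required.
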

\begin{proof}
	By Lemma \ref{lemma.computingslices-via-birinvs} and the definition of the zero slice (see 
	Definition \ref{def.unstable.slice}) 
	it is enough to prove (\ref{thm.vanishing.effective.cats.a}).
	
	We claim that the natural map $T^{r}\wedge Z_{+}\rightarrow \ast$ is a weak equivalence
	in $\rminonewbiratunsmotX$.  In effect, \cite[Prop. 2.17(2)]{MR1813224} implies that the following diagram
	is a cofibre sequence in $\unsmot$
		\begin{align}
				\label{diagram.thm.vanishing.effective.cats}
			\xymatrix{(\mathbb A ^{r}_{Z}\backslash Z) \ar[r]^-{\alpha _{r,Z}}& 
							\mathbb A ^{r}_{Z} \ar[r]& T ^{r}\wedge Z_{+}}
		\end{align}
	where $Z$ is embedded in $\mathbb A ^{r}_{Z}$ via the zero section.  Now, the identity functor
	$id:\unsmot \rightarrow \rminonewbiratunsmotX$ is a left Quillen functor, since $\rminonewbiratunsmotX$ is a
	left Bousfield localization of $\unsmot$; hence, diagram (\ref{diagram.thm.vanishing.effective.cats})
	is also a cofibre sequence in $\rminonewbiratunsmotX$.  Therefore, 
	by  the dual of \cite[I.3 Prop. 5(iii)]{MR0223432} it is enough to show that
	$\alpha _{r,Z}$ is a weak equivalence in $\rminonewbiratunsmotX$.  But this follows from the definition
	of $\rminonewbiratunsmotX$ (see Definition \ref{def.localizationAmod-weaklyBirat}),
	since the codimension of the closed complement of the open immersion $\alpha _{r,Z}$ is $r$.
	
	Finally, the result follows from Lemma \ref{lemma.computingslices-via-birinvs.2}.
\end{proof}
	
	Given $Y\in Sm_{X}$, let  $\mathbb P ^{n}(Y)$ denote the trivial projective bundle of rank $n$ over $Y$.
	Consider the following filtered diagram in $\unsmot$
		\begin{equation}
				\label{eq.inf.proj.space}
			\mathbb P^{0}(Y)\rightarrow \mathbb P ^{1}(Y)\rightarrow \cdots \rightarrow 
			\mathbb P^{n}(Y)\rightarrow \cdots
		\end{equation}
	given by the inclusions of the respective hyperplanes at infinity.
	
\begin{rmk}[see Remark \ref{rmk.evenPosttow.alggeom}]
	\label{rmk.homgps.cp.projsp}
The following result is an unstable motivic analogue of the standard result in topology:
$\pi _{i}(\mathbb P ^{n}(\mathbb C))\cong \pi _{i}(\mathbb P ^{n+1}(\mathbb C))$ for $i\leq 2n$.
\end{rmk}
	
\begin{thm}
		\label{thm.slices.proj.space}
		\begin{enumerate}
			\item	\label{thm.slices.proj.space.a}  For any integer $1\leq j\leq n+1$, the diagram \eqref{eq.inf.proj.space} 
						induces the following isomorphisms in $\unstablehomotopyX$
							\[	\xymatrix@C=1.5pc@R=0.5pc{t_{j}(\mathbb P^{n}(Y)_{+}) \ar[r]^-{\cong} & 
										t_{j}(\mathbb P ^{n+1}(Y)_{+})
										\ar[r]^-{\cong} &
										 \cdots \ar[r]^-{\cong} & t_{j}(\mathbb P^{n+m}(Y)_{+})
										\ar[r]^-{\cong} &\cdots}
							\]
			\item	\label{thm.slices.proj.space.b}  For any integer $0\leq j\leq n$, the diagram \eqref{eq.inf.proj.space} 
						induces the following isomorphisms in $\unstablehomotopyX$
							\[	\xymatrix@C=1.5pc@R=0.5pc{s_{j}(\mathbb P^{n}(Y)_{+}) \ar[r]^-{\cong} & 
										s_{j}(\mathbb P ^{n+1}(Y)_{+})
										\ar[r]^-{\cong} & 
										 \cdots \ar[r]^-{\cong} & s_{j}(\mathbb P^{n+m}(Y)_{+})
										\ar[r]^-{\cong} &\cdots}
							\]
		\end{enumerate}
\end{thm}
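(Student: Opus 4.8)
The plan is to reduce both assertions to the single claim that, for every $k\ge 0$, the transition map
\[
	\mathbb P^{n+k}(Y)_{+}\to \mathbb P^{n+k+1}(Y)_{+}
\]
in the diagram \eqref{eq.inf.proj.space} is a weak equivalence in the left Bousfield localization $WB_{n+k}\unsmot$ of $\unsmot$ at the set $WB_{n+k}$ (that is, $\nwbiratunsmotX$ with the parameter $n$ replaced by $n+k$). Granting this, \eqref{thm.slices.proj.space.a} is immediate from Lemma \ref{lemma.computingslices-via-birinvs.2}: that lemma forces $t_{j}$ to carry the $k$-th transition map to an isomorphism in $\unstablehomotopyX$ for every $1\le j\le n+k+1$, hence in particular for every $1\le j\le n+1$, and one composes along the chain. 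For \eqref{thm.slices.proj.space.b}, the case $j=0$ is $s_{0}=t_{1}$ (Definition \ref{def.unstable.slice}) and is covered by \eqref{thm.slices.proj.space.a}; for $1\le j\le n$ I would invoke Lemma \ref{lemma.computingslices-via-birinvs} for each transition map, which applies because by \eqref{thm.slices.proj.space.a} both $t_{j}$ and $t_{j+1}$ send every transition map to an isomorphism (note $j+1\le n+1$).

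So the real content is the claim about the transition maps, and the key is the classical projection-from-a-point construction. Fixing $k\ge 0$, I would regard $\mathbb P^{n+k}(Y)\to\mathbb P^{n+k+1}(Y)$ as the inclusion of a hyperplane subbundle $H\cong\mathbb P^{n+k}(Y)$ and choose a section $p:Y\to\mathbb P^{n+k+1}(Y)$ whose image is disjoint from $H$ (possible, e.g.\ with $H$ the hyperplane at infinity and $p$ a complementary coordinate point). Its image $p(Y)$ is a smooth closed subscheme isomorphic to $Y$ of codimension $n+k+1$ in $\mathbb P^{n+k+1}(Y)$, and linear projection away from $p$ realises the open subscheme $\mathbb P^{n+k+1}(Y)\setminus p(Y)$ as the total space of a line bundle over $H$ whose zero section is exactly the inclusion $H\hookrightarrow \mathbb P^{n+k+1}(Y)\setminus p(Y)$. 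Thus the hyperplane inclusion factors as the zero section of this line bundle followed by the open immersion $\mathbb P^{n+k+1}(Y)\setminus p(Y)\hookrightarrow\mathbb P^{n+k+1}(Y)$.

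To finish, I would argue that after adjoining a disjoint base point the first map is the zero section of a line bundle, hence a motivic ($\mathbb A^{1}$-)weak equivalence, hence a weak equivalence in $\unsmot$ and therefore in every left Bousfield localization of it, in particular in $WB_{n+k}\unsmot$. The second map is the open immersion $\bigl(\mathbb P^{n+k+1}(Y)\setminus p(Y)\bigr)_{+}\to\mathbb P^{n+k+1}(Y)_{+}$ with smooth irreducible target $\mathbb P^{n+k+1}(Y)$ (taking $Y$ irreducible, as we may) and smooth closed complement $p(Y)\cong Y$ of codimension $n+k+1\ge(n+k)+1$; hence it lies in $WB_{n+k}$ and is by construction a weak equivalence in $WB_{n+k}\unsmot$. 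By two-out-of-three the transition map $\mathbb P^{n+k}(Y)_{+}\to\mathbb P^{n+k+1}(Y)_{+}$ is a weak equivalence in $WB_{n+k}\unsmot$, which is the claim. (Alternatively one may transport everything to the fixed parameter $n$ via Lemma \ref{lemma.weakequivalences.under.tower}, since a weak equivalence in $WB_{n+k}\unsmot$ is a fortiori one in $WB_{n}\unsmot$.)

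The one step demanding an idea is the geometric factorization: the hyperplane inclusion is a closed immersion, so it is not itself one of the localizing maps of $WB_{n+k}$, and I expect the main obstacle to be precisely the realisation that projection from a point rewrites it as a composite of a motivic weak equivalence and a genuine element of $WB_{n+k}$. After that the argument is formal, resting on Lemmas \ref{lemma.computingslices-via-birinvs.2} and \ref{lemma.computingslices-via-birinvs}, which are already available; the irreducibility hypothesis in the definition of $WB_{m}$ is a harmless bookkeeping matter (reduce to $Y$ irreducible, or work componentwise). This mirrors the computation of the stable slices of trivial projective bundles in \cite[\S 4]{Pelaez:2011fk}.
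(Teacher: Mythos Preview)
Your proposal is correct and uses essentially the same approach as the paper: factor the hyperplane inclusion as the zero section of a vector bundle (a motivic weak equivalence) followed by an open immersion whose smooth closed complement has large enough codimension to lie in the relevant $WB_{m}$, then apply Lemmas \ref{lemma.computingslices-via-birinvs.2} and \ref{lemma.computingslices-via-birinvs}. The only organizational difference is that the paper handles the composite $\mathbb P^{n}(Y)\hookrightarrow\mathbb P^{k}(Y)$ in one shot by removing a complementary linear $\mathbb P^{k-n-1}(Y)$ of codimension exactly $n+1$ (so the open immersion lies directly in $WB_{n}$ and the complement is a rank $k-n$ vector bundle), whereas you step through the chain one hyperplane at a time, removing a point section of codimension $n+k+1$ at each stage; both versions rest on the same projection-from-a-linear-subspace geometry.
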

\begin{proof}
	By Lemma \ref{lemma.computingslices-via-birinvs} and the definition of the zero slice (see 
	Definition \ref{def.unstable.slice}) 
	it is enough to prove (\ref{thm.vanishing.effective.cats.a}).
	
	Let $k>n$, and consider the closed embedding induced by the diagram (\ref{eq.inf.proj.space}),
	$\lambda ^{k}_{n}:\mathbb P^{n}(Y)\rightarrow \mathbb P^{k}(Y)$.  It is possible to choose a linear embedding
	$\mathbb P^{k-n-1}(Y)\rightarrow \mathbb P^{k}(Y)$ such that its open complement $U_{k,n}$ contains
	$\mathbb P^{n}(Y)$ (embedded via the zero section $\sigma _{n}^{k}$)
	and has the structure of a vector bundle over $\mathbb P^{n}(Y)$:
		\[ \xymatrix{ U_{k,n} \ar[r]^-{v^{k}_{n}} \ar@<-1ex>[d]& \mathbb P ^{k}(Y) & 
						\mathbb P ^{k-n-1}(Y) \ar[l]\\
						\mathbb P ^{n}(Y) \ar@<-1ex>[u]_-{\sigma ^{k}_{n}} \ar@/_1pc/[ur]_-{\lambda _{n}^{k}} & &}
		\]
	By homotopy invariance $t_{j}(\sigma ^{k}_{n})$ is an isomorphism in $\unstablehomotopyX$
	for every integer $j$.
	On the other hand, $v ^{k}_{n}$ is a weak equivalence in $\nwbiratunsmotX$ since
	the codimension of its closed complement is $n+1$
	(see Definition \ref{def.localizationAmod-weaklyBirat}).  Thus, Lemma \ref{lemma.computingslices-via-birinvs.2}
	implies that  if $1\leq j\leq n+1$, then $t_{j}(v ^{k}_{n})$ is also an isomorphism in $\unstablehomotopyX$.
	
	Therefore, $t_{j}(\lambda ^{k}_{n})=t_{j}(v^{k}_{n})\circ t_{j}(\sigma ^{k}_{n})$ 
	is an isomorphism in $\unstablehomotopyX$
	for $1\leq j\leq n+1$.
\end{proof}

\begin{defi}
		\label{def.Thom.space}
	Given a closed embedding $Z\rightarrow Y$ in $Sm_{X}$, let $N_{Y,Z}$ denote the associated
	normal bundle, and
	given a vector bundle $\pi: V\rightarrow Y$ with $Y\in Sm_{X}$, let $Th(V)$ denote the Thom
	space of $V$, i.e. the pushout of the following diagram in $\unsmot$
		\[	\xymatrix{V\backslash \sigma _{0}(Y)_{+} \ar[r] \ar[d]& V_{+} \ar[d]\\
					X \ar[r]& Th(V)}
		\]
	where  $\sigma _{0}:Y\rightarrow V$ is the zero section of $V$.
\end{defi}

\begin{rmk}[see Remark \ref{rmk.evenPosttow.alggeom}]
	\label{rmk.conn.Thomspc}
The following result is an unstable motivic analogue of the standard result in topology: 
If $\xi$ is a complex vector bundle of rank $r$ over a CW complex, then
$\pi_{k}(Th(\xi))=\ast$ for $k\leq 2r-1$ (see \cite[p.\,205 Lem.\,18.1]{MR0440554}).
\end{rmk}
	
\begin{thm}
		\label{thm.vanishing.normal.bundle}
	Let $n\geq 0$; $0\leq j \leq n$; $1\leq k\leq n+1$ be arbitrary integers, and
	let $\iota _{U,Y}\in WB_{n}$ with closed complement $Z$.   
	Then:
		\begin{enumerate}
			\item \label{thm.vanishing.normal.bundle.a}	The natural map $t_{k}(Th(N_{Y,Z}))\rightarrow \ast$
						is an isomorphism in $\unstablehomotopyX$.
			\item \label{thm.vanishing.normal.bundle.b}	The natural map $s_{j}(Th(N_{Y,Z}))\rightarrow \ast$
						is an isomorphism in $\unstablehomotopyX$.
		\end{enumerate}
\end{thm}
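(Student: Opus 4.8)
The plan is to argue exactly as in the proof of Theorem~\ref{thm.vanishing.effective.cats}. First I would reduce everything to part~\eqref{thm.vanishing.normal.bundle.a}: granting that $t_{k}(Th(N_{Y,Z}))\rightarrow\ast$ is an isomorphism in $\unstablehomotopyX$ for every $1\le k\le n+1$, the case $j=0$ of \eqref{thm.vanishing.normal.bundle.b} is immediate since $s_{0}=t_{1}$ (Definition~\ref{def.unstable.slice}), and for $1\le j\le n$ both $t_{j}$ and $t_{j+1}$ applied to the map $Th(N_{Y,Z})\rightarrow\ast$ are isomorphisms, so Lemma~\ref{lemma.computingslices-via-birinvs} yields that $s_{j}(Th(N_{Y,Z}))\rightarrow\ast$ is an isomorphism in $\unstablehomotopyX$ (here one uses that $\tau_{k}$ sends $\ast$ to $\ast$, hence $t_{k}(\ast)\cong s_{j}(\ast)\cong\ast$).

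For \eqref{thm.vanishing.normal.bundle.a}, by Lemma~\ref{lemma.computingslices-via-birinvs.2} it suffices to show that $Th(N_{Y,Z})\rightarrow\ast$ is a weak equivalence in $\nwbiratunsmotX$. By Definition~\ref{def.Thom.space} the Thom space is the pushout of a diagram one of whose legs is the open immersion $N_{Y,Z}\backslash\sigma_{0}(Z)\hookrightarrow N_{Y,Z}$, which is a cofibration in $\unsmot$; hence there is a cofibre sequence
\[
(N_{Y,Z}\backslash\sigma_{0}(Z))_{+}\longrightarrow(N_{Y,Z})_{+}\longrightarrow Th(N_{Y,Z})
\]
in $\unsmot$, whose first map I denote by $\beta$. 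Since $id:\unsmot\rightarrow\nwbiratunsmotX$ is a left Quillen functor this is also a cofibre sequence in $\nwbiratunsmotX$, so by the dual of \cite[I.3 Prop.\,5(iii)]{MR0223432} it is enough to prove that $\beta$ is a weak equivalence in $\nwbiratunsmotX$. Now $\beta$ is an open immersion whose closed complement is the zero section $\sigma_{0}(Z)\cong Z$, which is smooth over $X$, and its codimension in $N_{Y,Z}$ equals the rank of the vector bundle $N_{Y,Z}$, namely $codim_{Y}Z\ge n+1$ because $\iota_{U,Y}\in WB_{n}$. Hence $\beta$ belongs to $WB_{n}$ and is a weak equivalence in $\nwbiratunsmotX$ by construction, which establishes \eqref{thm.vanishing.normal.bundle.a}.

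I do not expect a genuine obstacle: the argument is formally the same as for Theorem~\ref{thm.vanishing.effective.cats}, and the only step needing attention is the identification of $\beta$ as a $WB_{n}$-local weak equivalence. If one wishes to be careful about the irreducibility requirement in the definition of $WB_{n}$ when $Z$ (hence the total space $N_{Y,Z}$) fails to be irreducible, one decomposes $Z$ into its connected components $Z_{i}$ --- each of codimension $\ge codim_{Y}Z\ge n+1$ in $Y$ --- notes that after adjoining the disjoint basepoint $\beta$ becomes the coproduct $\bigvee_{i}\beta_{i}$ of the open immersions $\beta_{i}:(N_{Y,Z}|_{Z_{i}}\backslash\sigma_{0}(Z_{i}))_{+}\rightarrow(N_{Y,Z}|_{Z_{i}})_{+}$, each of which lies in $WB_{n}$, and invokes that a coproduct of weak equivalences between cofibrant objects (all objects of $\nwbiratunsmotX$ are cofibrant in the injective structure) is again a weak equivalence.
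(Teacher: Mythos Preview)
Your reduction of \eqref{thm.vanishing.normal.bundle.b} to \eqref{thm.vanishing.normal.bundle.a} and your use of Lemma~\ref{lemma.computingslices-via-birinvs.2} match the paper exactly. The difference lies in how you show that $Th(N_{Y,Z})\to\ast$ is a $WB_{n}$-local equivalence. The paper invokes the Morel--Voevodsky homotopy purity theorem to obtain the cofibre sequence
\[
U_{+}\xrightarrow{\iota_{U,Y}} Y_{+}\longrightarrow Th(N_{Y,Z})
\]
in $\unsmot$, and then simply uses that $\iota_{U,Y}$ itself is in $WB_{n}$ by hypothesis. You instead work directly from Definition~\ref{def.Thom.space} and use the cofibre sequence $(N_{Y,Z}\backslash\sigma_{0}(Z))_{+}\to(N_{Y,Z})_{+}\to Th(N_{Y,Z})$, checking by hand that the open immersion $\beta$ has smooth closed complement $\sigma_{0}(Z)\cong Z$ of codimension $\mathrm{rk}\,N_{Y,Z}=codim_{Y}Z\geq n+1$. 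This is a genuinely different and more elementary route: it avoids homotopy purity entirely, at the cost of the small bookkeeping you flag concerning irreducibility. The paper's approach is cleaner precisely because $\iota_{U,Y}\in WB_{n}$ is already given, so no decomposition of $Z$ is needed; your approach has the advantage of showing that the statement really only depends on the normal bundle and not on the ambient pair $(Y,U)$. One caution on your last paragraph: over a general base $X$ a connected smooth $X$-scheme need not be irreducible, so decomposing $Z$ into \emph{connected} components may not suffice to land each $\beta_{i}$ in $WB_{n}$; you should either decompose into irreducible components and filter, or restrict the remark to the case where $X$ is regular (e.g.\ a field), where connected smooth implies irreducible.
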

\begin{proof}
	By Lemma \ref{lemma.computingslices-via-birinvs} and the definition of the zero slice (see 
	Definition \ref{def.unstable.slice}) it is enough to prove (\ref{thm.vanishing.effective.cats.a}).
	
	Now, it follows from the Morel-Voevodsky homotopy purity theorem 
	(see \cite[Thm. 2.23]{MR1813224}) that
	the following diagram is a cofibre sequence in $\unsmot$
		\[	\xymatrix{U_{+} \ar[r]^-{\iota _{U,Y}}& Y_{+} \ar[r]& Th(N_{Y,Z})}
		\]
	and Theorem \ref{tower.Quillenfunctors} implies that the diagram above is also a cofibre sequence in
	$\nwbiratunsmotX$.  But, $\iota _{U,Y}$ is a weak equivalence in $\nwbiratunsmotX$, since
	it is defined as a left Bousfield localization of $\unsmot$ with respect to $WB_{n}$.
	Hence, the dual of \cite[I.3 Prop. 5(iii)]{MR0223432} implies that the map
		\[	Th(N_{Y,Z})\rightarrow \ast
		\]
	is a weak equivalence in $\nwbiratunsmotX$, and the result then follows from
	Lemma \ref{lemma.computingslices-via-birinvs.2}.
\end{proof}
	
\begin{rmk}[see Remark \ref{rmk.evenPosttow.alggeom}]
	\label{rmk.conn.Thomspc2}
The following result is an unstable motivic analogue of the standard result in topology: 
If $\xi$ is a complex vector bundle of rank $r$ over a CW complex $X$, then
$Th(\xi)$ has a cell decompostion given by the base point and $(n+2r)$-cells which are in bijection
with the $n$-cells of $X$, for $n>0$ (see \cite[p.\,205 Lem.\,18.1]{MR0440554}).
\end{rmk}	
	
	In the case $U=Y$, the following result is due to Morel-Voevodsky 
	(see Proposition 2.17(2) in 
	\cite{MR1813224}), and this particular case is an ingredient in our proof.
	
\begin{thm}
		\label{thm.slices-thom}
	Let $n\geq 0$; $0\leq j \leq n$; $1\leq k\leq n+1$ be arbitrary integers, and
	let $\iota _{U,Y}\in WB_{n}$.  Consider a vector bundle
	$\pi :V\rightarrow Y$ of rank $r$
	together with a trivialization $\lambda :\pi ^{-1}(U)\rightarrow \mathbb A ^{r}_{U}$ of its restriction to $U$.
	Then:
		\begin{enumerate}
			\item \label{thm.slices-thom.a}  There exists an isomorphism in $\unstablehomotopyX$
					\[	t_{k}(Th(V))\cong t_{k}(S^{r}\wedge \Gm ^{r}\wedge Y_{+})
					\]
			\item \label{thm.slices-thom.b}  There exists an isomorphism in $\unstablehomotopyX$
					\[	s_{j}(Th(V))\cong s_{j}(S^{r}\wedge \Gm ^{r}\wedge Y_{+})
					\]
		\end{enumerate}
\end{thm}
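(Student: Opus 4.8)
The plan is to imitate the proofs of Theorems \ref{thm.vanishing.effective.cats} and \ref{thm.vanishing.normal.bundle}: I would produce a short zig-zag in $\unsmot$
\[
Th(V)\longleftarrow Th(\pi^{-1}(U))\xrightarrow{\ \cong\ }Th(\mathbb A^{r}_{U})\longrightarrow Th(\mathbb A^{r}_{Y})\xrightarrow{\ \cong\ }S^{r}\wedge\Gm^{r}\wedge Y_{+},
\]
where the first isomorphism is induced by the trivialization $\lambda$, the last is the Morel-Voevodsky identification \cite[Prop. 2.17(2)]{MR1813224} of the Thom space of a trivial bundle (applied functorially in the base), and the two remaining legs are weak equivalences in $\nwbiratunsmotX$. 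Granting this, (\ref{thm.slices-thom.a}) follows by applying $t_{k}$ for $1\le k\le n+1$ and using Lemma \ref{lemma.computingslices-via-birinvs.2} on the two $\nwbiratunsmotX$--equivalences (isomorphisms go to isomorphisms under $t_{k}$ for free). For (\ref{thm.slices-thom.b}): the case $j=0$ is $s_{0}=t_{1}$, handled by the case $k=1$ of (\ref{thm.slices-thom.a}); for $1\le j\le n$ one applies Lemma \ref{lemma.computingslices-via-birinvs} to each leg of the zig-zag, since each leg has $t_{j}$ and $t_{j+1}$ an isomorphism (for the $\nwbiratunsmotX$--equivalences by Lemma \ref{lemma.computingslices-via-birinvs.2}, using $j+1\le n+1$), whence $s_{j}$ of each leg is an isomorphism.

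The core is the following claim: \emph{for any rank $r$ vector bundle $\pi\colon V\to Y$ with $Y\in Sm_{X}$ irreducible and $\iota_{U,Y}\in WB_{n}$, the natural map $Th(\pi^{-1}(U))\to Th(V)$ is a weak equivalence in $\nwbiratunsmotX$}. To see it, I would first check that the two open immersions
\[
\pi^{-1}(U)\hookrightarrow V\qquad\text{and}\qquad \pi^{-1}(U)\setminus\sigma_{0}(U)\hookrightarrow V\setminus\sigma_{0}(Y)
\]
both lie in $WB_{n}$: $V$ is smooth and irreducible (a vector bundle over the smooth irreducible $Y$) and, since $r\ge 1$, so is the dense open $V\setminus\sigma_{0}(Y)$; their closed complements are $\pi^{-1}(Z)$ and $\pi^{-1}(Z)\setminus\sigma_{0}(Z)$ with $Z=Y\setminus U\in Sm_{X}$, which are smooth (open in the vector bundle $\pi^{-1}(Z)$ over the smooth $Z$) and of codimension $\ge n+1$, because $\pi$ is flat of relative dimension $r$ and $codim_{Y}Z\ge n+1$. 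Hence both are weak equivalences in $\nwbiratunsmotX$. Now Definition \ref{def.Thom.space} makes the rows of
\[
\xymatrix{(\pi^{-1}(U)\setminus\sigma_{0}(U))_{+}\ar[r]\ar[d]& (\pi^{-1}(U))_{+}\ar[r]\ar[d]& Th(\pi^{-1}(U))\ar[d]\\ (V\setminus\sigma_{0}(Y))_{+}\ar[r]& V_{+}\ar[r]& Th(V)}
\]
cofibre sequences in $\unsmot$, hence in $\nwbiratunsmotX$ by Theorem \ref{tower.Quillenfunctors}(\ref{tower.Quillenfunctors.b}); the first two vertical maps are the $WB_{n}$--equivalences just identified, so the dual of \cite[I.3 Prop. 5(iii)]{MR0223432} forces the third to be a weak equivalence in $\nwbiratunsmotX$.

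Applying the claim to $V$ gives the first leg of the zig-zag, and applying it to the trivial bundle $\mathbb A^{r}_{Y}$ gives that $Th(\mathbb A^{r}_{U})\to Th(\mathbb A^{r}_{Y})$ is a weak equivalence in $\nwbiratunsmotX$; together with the isomorphism $Th(\pi^{-1}(U))\cong Th(\mathbb A^{r}_{U})$ induced by $\lambda$ and the identification $Th(\mathbb A^{r}_{W})\simeq S^{r}\wedge\Gm^{r}\wedge W_{+}$ this assembles the zig-zag above, and the theorem follows as in the first paragraph.

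Since the Morel-Voevodsky computation of the Thom space of a trivial bundle is taken as input, I do not expect a genuine obstacle; the one point requiring care is the codimension and smoothness bookkeeping for the closed complement $\pi^{-1}(Z)\setminus\sigma_{0}(Z)$ of $\pi^{-1}(U)\setminus\sigma_{0}(U)$ inside $V\setminus\sigma_{0}(Y)$ --- in particular the observation that for $r\ge 1$ the zero section meets each fibre in a proper closed subvariety, so deleting it neither destroys irreducibility of the ambient space nor removes any generic point of that complement, and the codimension is therefore still $\ge n+1$ by flatness of $\pi$.
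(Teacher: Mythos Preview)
Your proof is correct and essentially identical to the paper's: the core claim (that $Th(\pi^{-1}(U))\to Th(V)$ is a $WB_n$-local equivalence, proved by checking that the two open immersions upstairs lie in $WB_n$ and then comparing cofibre sequences) is exactly what the paper establishes, and the reduction to Lemmas \ref{lemma.computingslices-via-birinvs.2} and \ref{lemma.computingslices-via-birinvs} is the same. The only cosmetic difference is the last leg of the zig-zag: you reuse the claim on the trivial bundle $\mathbb A^{r}_{Y}$ to pass from $Th(\mathbb A^{r}_{U})$ to $Th(\mathbb A^{r}_{Y})\simeq S^{r}\wedge\Gm^{r}\wedge Y_{+}$, whereas the paper first applies Morel--Voevodsky over $U$ to get $Th(\mathbb A^{r}_{U})\simeq S^{r}\wedge\Gm^{r}\wedge U_{+}$ and then invokes Ken Brown's lemma to see that $S^{r}\wedge\Gm^{r}\wedge\iota_{U,Y}$ is a $WB_n$-local equivalence.
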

\begin{proof}
	Let $Z\in Sm_{X}$ be the closed complement of $\iota _{U,Y}$.
	Consider the following diagram in $Sm_{X}$, where all the squares are cartesian
		\[	\xymatrix{\pi ^{-1}(Z)\cap (V\backslash \sigma _{0}(Y)) \ar[r] \ar[d] & V
						\backslash \sigma _{0}(Y) 
						\ar[d] & \pi ^{-1}(U) \cap (V\backslash \sigma _{0}(Y)) \ar[l]_-{\beta} \ar[d] \\
						\pi ^{-1}(Z) \ar[r] \ar[d]& V \ar[d]^-{\pi} & \pi ^{-1}(U) \ar[l]_-{\alpha} \ar[d] \\
						Z \ar[r] & Y & U \ar[l]^-{\iota _{U,Y}}}
		\]
	and let $\gamma:Th(\pi ^{-1}(U))\rightarrow Th(V)$ be the induced map between the 
	corresponding Thom spaces.
	We observe that $\alpha, \beta$ also belong to $WB_{n}$; thus, $\alpha, \beta$
	are weak equivalences in $\nwbiratunsmotX$ (see Definition \ref{def.localizationAmod-weaklyBirat}).
	We claim that 
		\[ \gamma: Th(\pi^{-1}(U)) \rightarrow Th(V)
		\]
	is also a weak equivalence in $\nwbiratunsmotX$.  In effect,
	by construction of the Thom spaces, we deduce that 
	the rows in the following commutative diagram are 
	in fact cofibre sequences in $\unsmot$
		\[	\xymatrix{\pi ^{-1}(U)\cap 
			(V\backslash \sigma _{0}(Y))_{+} \ar[r] \ar[d]_-{\beta} & 
			\pi ^{-1}(U)_{+} \ar[r] \ar[d]_-{\alpha} & Th(\pi ^{-1}(U)) 
			\ar[d]_-{\gamma} \\
			(V\backslash \sigma _{0}(Y))_{+} \ar[r] & V_{+} \ar[r]
			& Th(V)}
		\]
	But, Theorem \ref{tower.Quillenfunctors} implies that these rows are also cofibre
	sequences in $\nwbiratunsmotX$.  Since $\alpha$, $\beta$ are weak equivalences in $\nwbiratunsmotX$,
	it follows from the dual of \cite[I.3 Prop. 5(iii)]{MR0223432} that
	$\gamma$ is also a weak equivalence in $\nwbiratunsmotX$.
	
	Now, we use the trivialization $\lambda$ to obtain the following commutative diagram in $Sm_{X}$ where the
	rows are isomorphisms
		\[	\xymatrix{\mathbb A ^{r}_{U}\backslash U  \ar[d] & 
						& \pi ^{-1}(U) \cap (V\backslash \sigma_{0}(Y))
						\ar[ll]_-{\tilde{\lambda}}^-{\cong}\ar[d]\\
						\mathbb A ^{r}_{U}  \ar[dr]& & \pi ^{-1}(U) \ar[dl]^-{\pi _{U}} 
						\ar[ll]_-{\lambda}^-{\cong}\\ & U &}
		\]
	The same argument as above, shows that there is a weak 
	equivalence in  $\nwbiratunsmotX$
		\[ \bar{\lambda}: Th(\pi ^{-1}(U))
			\rightarrow Th(\mathbb A ^{r}_{U})
		\]
	On the other hand, \cite[Prop. 2.17(2)]{MR1813224} implies that there is a weak equivalence
	$w:Th(\mathbb A ^{r}_{U}) \rightarrow S^{r}\wedge \Gm ^{r}\wedge U_{+}$
	in $\unsmot$.  Since $\nwbiratunsmotX$ is a left Bousfield localization of $\unsmot$, 
	it follows from \cite[Prop. 3.3.3(1)(a)]{MR1944041} that
	$w$ is also a weak equivalence in $\nwbiratunsmotX$.  Moreover, by construction
	$\iota _{U,Y}$ is a weak equivalence in $\nwbiratunsmotX$ 
	(see Definition \ref{def.localizationAmod-weaklyBirat}),
	since we are assuming that $\iota _{U,Y}\in WB_{n}$; and Ken Brown's lemma 
	(see \cite[Lem. 1.1.12]{MR1650134}) implies that
	$S^{r}\wedge \Gm ^{r}\wedge \iota _{U,Y}$ is also a weak equivalence in $\nwbiratunsmotX$.
	
	Thus, all the maps in the following diagram are weak equivalences in $\nwbiratunsmotX$
		\[	\xymatrix{Th(\pi ^{-1}(U)) \ar[r]^-{\bar{\lambda}} \ar[d]_-{\gamma}& 
							Th(\mathbb A ^{r}_{U}) \ar[r]^-{w} & 
							S^{r}\wedge \Gm ^{r}\wedge U_{+} \ar[d]^-{S^{r}\wedge \Gm ^{r}\wedge \iota _{U,Y}}\\
							Th(V)  &&  S^{r}\wedge \Gm ^{r}\wedge Y_{+}}
		\]
	Finally, the result follows from Lemmas \ref{lemma.computingslices-via-birinvs.2} and
	\ref{lemma.computingslices-via-birinvs}.
\end{proof}

\begin{cor}
		\label{cor.slices-thom}
	Assume that the base scheme $X=\mathrm{Spec}\; k$, with $k$ a perfect field.
	Let $n\geq 0$; $0\leq j \leq n$; $1\leq k\leq n+1$ be arbitrary integers, and
	let $\iota _{U,Y}\in B_{n}$.  Consider a vector bundle
	$\pi :V\rightarrow Y$ of rank $r$
	together with a trivialization $\lambda :\pi ^{-1}(U)\rightarrow \mathbb A ^{r}_{U}$ of its restriction to $U$.
	Then:
		\begin{enumerate}
			\item \label{cor.slices-thom.a}  There exists an isomorphism in $\unstablehomotopyX$
					\[	t_{k}(Th(V))\cong t_{k}(S^{r}\wedge \Gm ^{r}\wedge Y_{+})
					\]
			\item \label{cor.slices-thom.b}  There exists an isomorphism in $\unstablehomotopyX$
					\[	s_{j}(Th(V))\cong s_{j}(S^{r}\wedge \Gm ^{r}\wedge Y_{+})
					\]
		\end{enumerate}
\end{cor}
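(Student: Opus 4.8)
The plan is to obtain Corollary \ref{cor.slices-thom} by rerunning the proof of Theorem \ref{thm.slices-thom}, after observing that over a perfect field the hypothesis $\iota_{U,Y}\in B_n$ is as good as $\iota_{U,Y}\in WB_n$ for our purposes. Concretely, the proof of Proposition \ref{prop.genericsmoothness-Quillenequiv} already shows that when $X=\mathrm{Spec}\; k$ with $k$ perfect, every map in $B_n$ (with possibly singular closed complement) is a weak equivalence in $\nwbiratunsmotX$; and by Remark \ref{rmk.gen.smoothness} the functors $t_k$, $s_j$ of Definition \ref{def.unstable.slice} are canonically isomorphic whether built from $WB_n$ or from $B_n$. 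So it suffices to check that all the maps used in the proof of Theorem \ref{thm.slices-thom} are still weak equivalences in $\nwbiratunsmotX$ once $\iota_{U,Y}$ is only assumed to lie in $B_n$.

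Write $Z=Y\backslash U$ for the (now possibly singular) closed complement, of codimension $c=codim_{Y}Z\ge n+1$. The only point that is not a literal transcription of the earlier proof is the verification that the auxiliary open immersions still belong to $B_n$. The maps $\alpha:\pi^{-1}(U)_{+}\rightarrow V_{+}$ and $\beta:(\pi^{-1}(U)\cap(V\backslash\sigma_{0}(Y)))_{+}\rightarrow(V\backslash\sigma_{0}(Y))_{+}$ have closed complements $\pi^{-1}(Z)$ and $\pi^{-1}(Z)\cap(V\backslash\sigma_{0}(Y))$, each of codimension $c\ge n+1$ in the respective ambient scheme; and $V$, $V\backslash\sigma_{0}(Y)$ are irreducible since $Y$ is irreducible and $\pi$ is a vector bundle of rank $r\ge 1$. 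Hence $\alpha,\beta\in B_n$, so they are weak equivalences in $\nwbiratunsmotX$ by Proposition \ref{prop.genericsmoothness-Quillenequiv}. The defining cofibre sequences of the Thom spaces (Definition \ref{def.Thom.space}) are cofibre sequences in $\unsmot$, hence in $\nwbiratunsmotX$ because $id:\unsmot\rightarrow\nwbiratunsmotX$ is a left Quillen functor; so the dual of \cite[I.3 Prop. 5(iii)]{MR0223432} gives, exactly as in Theorem \ref{thm.slices-thom}, that $\gamma:Th(\pi^{-1}(U))\rightarrow Th(V)$ and $\bar\lambda:Th(\pi^{-1}(U))\rightarrow Th(\mathbb{A}^{r}_{U})$ are weak equivalences in $\nwbiratunsmotX$. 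The map $w:Th(\mathbb{A}^{r}_{U})\rightarrow S^{r}\wedge\Gm^{r}\wedge U_{+}$ is a weak equivalence in $\unsmot$ by \cite[Prop. 2.17(2)]{MR1813224}, hence in $\nwbiratunsmotX$, and Ken Brown's lemma promotes $\iota_{U,Y}\in B_n$ to the statement that $S^{r}\wedge\Gm^{r}\wedge\iota_{U,Y}$ is a weak equivalence in $\nwbiratunsmotX$.

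At this stage all of the maps $\gamma$, $\bar\lambda$, $w$, $S^{r}\wedge\Gm^{r}\wedge\iota_{U,Y}$ in the commutative square relating $Th(V)$ with $S^{r}\wedge\Gm^{r}\wedge Y_{+}$ are weak equivalences in $\nwbiratunsmotX$, so Lemmas \ref{lemma.computingslices-via-birinvs.2} and \ref{lemma.computingslices-via-birinvs} apply verbatim and yield the isomorphisms $t_{k}(Th(V))\cong t_{k}(S^{r}\wedge\Gm^{r}\wedge Y_{+})$ and $s_{j}(Th(V))\cong s_{j}(S^{r}\wedge\Gm^{r}\wedge Y_{+})$ in $\unstablehomotopyX$. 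I do not anticipate a genuine obstacle here; the only thing requiring care is the elementary bookkeeping that the intermediate varieties remain irreducible and that the codimension bound $\ge n+1$ is preserved on passing to the total space of $\pi$ and to the complement of its zero section. A shorter alternative, which I would probably use in the final write-up, is to invoke Remark \ref{rmk.gen.smoothness} at the outset to replace every $WB_n$-localization appearing in Theorem \ref{thm.slices-thom} by the corresponding $B_n$-localization and then cite that theorem directly.
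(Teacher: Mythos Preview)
Your proposal is correct and matches the paper's own approach: the paper's proof is the single sentence ``By Proposition \ref{prop.genericsmoothness-Quillenequiv} we can use the same argument as in Theorem \ref{thm.slices-thom} to prove the result,'' which is exactly what you carry out in detail (and your ``shorter alternative'' via Remark \ref{rmk.gen.smoothness} is an equally valid reading of that sentence). The only minor point worth recording is that the case $r=0$ is trivial, so your irreducibility check for $V\backslash\sigma_{0}(Y)$ under the hypothesis $r\geq 1$ loses no generality.
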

\begin{proof}
	By Proposition \ref{prop.genericsmoothness-Quillenequiv} we can use
	the same argument as in Theorem \ref{thm.slices-thom} to prove the result.
\end{proof}

	Given a closed embedding $Z\rightarrow Y$ of smooth schemes over $X$, let $\blowup _{Z}Y$
	denote the blowup of $Y$ with center in $Z$.
	
\begin{rmk}[see Remark \ref{rmk.evenPosttow.alggeom}]
	\label{rmk.conn.blowups}
Combining Theorems \ref{thm.Post.real.comp} and \ref{prop.Postnikov.agree} with the proof of Theorem
\ref{thm.blowups} we obtain the following result in the classical topological setting, which although not
surprising it appears not to be in the literature:  Let $Z\rightarrow Y$ be a closed embedding of smooth
quasiprojective complex algebraic varieties, and let $\blowup _{Z}Y$ denote the blowup of $Y$ with center
in $Z$.  Then the $k$-skeleton of $Y$ is a retract of the $k$-skeleton of $\blowup _{Z}Y$, where $k$
is the codimension of $Z$ in $Y$.
\end{rmk}	

\begin{thm}
		\label{thm.blowups}
	Let $n\geq 1$; $0\leq j \leq n$; $1\leq k\leq n+1$ be arbitrary integers, and
	let $\iota _{U,Y}\in WB_{n}$ with closed complement $Z$.  
	Consider the following cartesian square in $Sm_{X}$
		\begin{equation}
				\label{eq.blowup}
			\begin{array}{c}
			\xymatrix{ D \ar[d]_-{q} \ar[r]^-{d}& \blowup _{Z}Y \ar[d]^-{p} & U \ar[l]_-{u} 
						\ar@{=}[d] \\
						 Z \ar[r]_-{i} & Y & U \ar[l]^-{\iota _{U,Y}}}
			\end{array}
		\end{equation}
	Then: 
		\begin{enumerate}
			\item	\label{thm.blowups.a}	$t_{1}(u)=s_{0}(u)$ is an isomorphism in 
				$\unstablehomotopyX$.
			\item	\label{thm.blowups.b}	$t_{k}(\iota _{U,Y})$ 
				is an isomorphism in $\unstablehomotopyX$, and the following diagram
				commutes
					\begin{align}
							\label{diagram.thm.blowups.b}
						\begin{array}{c}
							\xymatrix{& \blowup _{Z}Y_{+} \ar[d]^-{t_{k}(p)}\\
								Y_{+} \ar@{=}[r] \ar[ur]^-{\tilde{r}_{k}}& Y_{+}}
						\end{array}
					\end{align}
				where $\tilde{r}_{k}=t_{k}(u)\circ (t_{k}(\iota _{U,Y}))^{-1}$.		
			\item	\label{thm.blowups.c}	$s_{j}(\iota _{U,Y})$ 
				is an isomorphism in $\unstablehomotopyX$, and the following diagram
				commutes
					\begin{align}
							\label{diagram.thm.blowups.c}
						\begin{array}{c}
							\xymatrix{& \blowup _{Z}Y_{+} \ar[d]^-{s_{j}(p)}\\
								Y_{+} \ar@{=}[r] \ar[ur]^-{r_{j}}& Y_{+}}
						\end{array}
					\end{align}
				where $r_{j}=s_{j}(u)\circ (s_{j}(\iota _{U,Y}))^{-1}$.
		\end{enumerate}
\end{thm}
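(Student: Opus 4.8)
The plan is to derive all three parts from the single geometric identity $p\circ u=\iota_{U,Y}$ coming from the cartesian square \eqref{eq.blowup} (whose right-hand square has the identity of $U$ as its right vertical edge), together with the observation that $u$ and $\iota_{U,Y}$ are local equivalences for suitable weakly birational localizations of $\unsmot$, so that the endofunctors $t_{k}$ and $s_{j}$ of $\unstablehomotopyX$ turn them into isomorphisms by Lemmas \ref{lemma.computingslices-via-birinvs.2} and \ref{lemma.computingslices-via-birinvs}. Once those isomorphisms are available, the commutativity of the triangles \eqref{diagram.thm.blowups.b} and \eqref{diagram.thm.blowups.c} is purely formal: applying $t_{k}$ (resp.\ $s_{j}$) to $p\circ u=\iota_{U,Y}$ gives $t_{k}(p)\circ t_{k}(u)=t_{k}(\iota_{U,Y})$ (resp.\ $s_{j}(p)\circ s_{j}(u)=s_{j}(\iota_{U,Y})$), and composing on the right with the inverse of $t_{k}(\iota_{U,Y})$ (resp.\ $s_{j}(\iota_{U,Y})$) yields exactly $t_{k}(p)\circ\tilde r_{k}=\mathrm{id}$ (resp.\ $s_{j}(p)\circ r_{j}=\mathrm{id}$), which is the content of the two diagrams.

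For \eqref{thm.blowups.a} I would first note that, by the cartesian square \eqref{eq.blowup}, $u$ is the pullback of the open immersion $\iota_{U,Y}$ along $p$, hence itself an open immersion whose reduced closed complement is the exceptional divisor $D$. Since \eqref{eq.blowup} lies in $Sm_{X}$, $D$ is smooth; moreover $\blowup_{Z}Y$ is smooth and irreducible (as $Y$ is), and $D$ is a divisor, hence of codimension $1$ in $\blowup_{Z}Y$. Thus $u\in WB_{0}$, i.e.\ $u$ is a weak equivalence in $\zerowbiratunsmotX$, and Lemma \ref{lemma.computingslices-via-birinvs.2} shows that $t_{1}(u)=s_{0}(u)$ is an isomorphism in $\unstablehomotopyX$ (recall $s_{0}=t_{1}$, Definition \ref{def.unstable.slice}).

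For \eqref{thm.blowups.b}: by hypothesis $\iota_{U,Y}\in WB_{n}$, so it is by construction a weak equivalence in $\nwbiratunsmotX$, and Lemma \ref{lemma.computingslices-via-birinvs.2} gives that $t_{k}(\iota_{U,Y})$ is an isomorphism for every $1\le k\le n+1$; the commutativity of \eqref{diagram.thm.blowups.b} then follows by the formal argument above. For \eqref{thm.blowups.c}: when $j=0$ one has $s_{0}(\iota_{U,Y})=t_{1}(\iota_{U,Y})$, an isomorphism by \eqref{thm.blowups.b}; when $1\le j\le n$, part \eqref{thm.blowups.b} already furnishes isomorphisms $t_{j+1}(\iota_{U,Y})$ and $t_{j}(\iota_{U,Y})$ (as $1\le j\le j+1\le n+1$), so Lemma \ref{lemma.computingslices-via-birinvs} applies to show $s_{j}(\iota_{U,Y})$ is an isomorphism; commutativity of \eqref{diagram.thm.blowups.c} again follows formally. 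I expect the only genuinely non-bookkeeping step to be the geometric input in \eqref{thm.blowups.a}, namely identifying the closed complement of $u$ with the smooth exceptional divisor and checking it has codimension one so that $u\in WB_{0}$; after that, the whole statement is a formal consequence of the birational-invariance Lemmas \ref{lemma.computingslices-via-birinvs.2} and \ref{lemma.computingslices-via-birinvs} together with the functoriality of $t_{k}$ and $s_{j}$.
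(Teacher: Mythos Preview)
Your proof is correct and follows essentially the same approach as the paper: identify $u\in WB_{0}$ and $\iota_{U,Y}\in WB_{n}$ as weak equivalences in the corresponding localizations, then apply Lemmas \ref{lemma.computingslices-via-birinvs.2} and \ref{lemma.computingslices-via-birinvs}, with the commutativity of the triangles deduced formally from $p\circ u=\iota_{U,Y}$. You supply a bit more detail than the paper (explicitly justifying why the closed complement of $u$ is the smooth codimension-one exceptional divisor, and spelling out the formal diagram chase), but the argument is the same.
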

\begin{proof}
	(\ref{thm.blowups.a}):  We observe that $u\in WB_{0}$ (see Definition \ref{def.localizing-maps2}).  
	Thus, $u$ is a weak equivalence in $\zerowbiratunsmotX$ 
	(see Definition \ref{def.localizationAmod-weaklyBirat});
	and the result follows from Lemma \ref{lemma.computingslices-via-birinvs.2}.
	
	(\ref{thm.blowups.b}): We observe that $\iota _{U,Y}$ is a weak equivalence in $\nwbiratunsmotX$
	(see Definition \ref{def.localizationAmod-weaklyBirat}).  
	Then Lemma \ref{lemma.computingslices-via-birinvs.2} implies that
	$t_{k}(\iota _{U,Y})$ is an isomorphism in $\unstablehomotopyX$.
	Thus, $\tilde{r}_{k}$ is well defined, and by functoriality the diagram (\ref{diagram.thm.blowups.b})
	commutes in $\unstablehomotopyX$.
		
	(\ref{thm.blowups.c}):  The result follows from (\ref{thm.blowups.b}) above together with Lemma
	\ref{lemma.computingslices-via-birinvs} and the definition of the zero slice
	(see Definition \ref{def.unstable.slice}).
\end{proof}

\end{section}


\section*{Acknowledgements}
	The author would like to thank Marc Levine for bringing to his attention the 
	connection between slices and birational cohomology theories, and Chuck Weibel 
	for pointing out that fibre sequences are a more reasonable way to define 
	Postnikov towers in the unstable setting.


\bibliography{biblio_unstsf}
\bibliographystyle{abbrv}

\end{document}